\definecolor{qqqqff}{rgb}{0.,0.,1.}
\definecolor{cqcqcq}{rgb}{0.7529411764705882,0.7529411764705882,0.7529411764705882}
\definecolor{ttqqqq}{rgb}{0.2,0.,0.}
\definecolor{qqqqff}{rgb}{0.,0.,1.}
\definecolor{xdxdff}{rgb}{0.49019607843137253,0.49019607843137253,1.}
\definecolor{zzttqq}{rgb}{0.6,0.2,0.}
\definecolor{cqcqcq}{rgb}{0.7529411764705882,0.7529411764705882,0.7529411764705882}
\definecolor{yqyqyq}{rgb}{0.5019607843137255,0.5019607843137255,0.5019607843137255}
\definecolor{uuuuuu}{rgb}{0.26666666666666666,0.26666666666666666,0.26666666666666666}
\definecolor{xdxdff}{rgb}{0.49019607843137253,0.49019607843137253,1.}
\definecolor{qqqqff}{rgb}{0.,0.,1.}
 \font\ncsc=cmcsc10
\newcommand{\PP}{\mathbb{P}}
\newcommand{\NN}{\mathbb{N}}
\newcommand{\ZZ}{\mathbb{Z}}
\newcommand{\RR}{\mathbb{R}}
\newcommand{\CC}{\mathbb{C}}
\newcommand{\QQ}{\mathbb{Q}}
\newcommand{\TT}{\mathbb{T}}
\newcommand{\AAA}{\mathscr{A}}
\newcommand{\BBB}{\mathscr{B}}
\newcommand{\CCC}{\mathscr{C}}
\newcommand{\EEE}{\mathscr{E}}
\newcommand{\MMM}{\mathscr{M}}
\newcommand{\Dfk}{\mathfrak{D}}
\newcommand{\mfk}{\mathfrak{m}}
\newcommand{\hfk}{\mathfrak{h}}
\newcommand{\tfk}{\mathfrak{t}}
\newcommand{\E}{\mathcal{E}}
\renewcommand{\P}{\mathcal{P}}
\renewcommand{\L}{\mathcal{L}}
\newcommand{\N}{\mathcal{N}}
\newcommand{\M}{\mathcal{M}}
\newcommand{\G}{\mathcal{G}}
\newcommand{\J}{\mathcal{J}}
\newcommand{\Q}{\mathcal{Q}}
\renewcommand{\O}{\mathcal{O}}
\newcommand{\F}{\mathcal{F}}
\newcommand{\gen}[1]{\langle #1 \rangle}
\newcommand{\val}{\mathrm{val}}
\newcommand{\quant}[1]{\left[ #1 \right]}
\newtheorem{theo}{Theorem}[section]
\newtheorem*{theom}{Theorem}
\newtheorem{prop}[theo]{Proposition}
\newtheorem{coro}[theo]{Corollary}
\newtheorem{lem}[theo]{Lemma}
\theoremstyle{definition}
\newtheorem{defi}[theo]{Definition}
\theoremstyle{remark}
\newtheorem{remark}[theo]{Remark}
\newenvironment{rem}[1]{
    \begin{remark}#1}{
    \xqed{\blacklozenge}\end{remark}
}
\theoremstyle{remark}
\newtheorem{example}[theo]{Example}
\newenvironment{expl}[1]{
    \begin{example}#1}{
    \xqed{\lozenge}\end{example}
}
\newcommand{\xqed}[1]{
    \leavevmode\unskip\penalty9999 \hbox{}\nobreak\hfill
    \quad\hbox{\ensuremath{#1}}}
\keywords{Enumerative geometry, tropical refined invariants, relative invariants, floor diagrams\\ 
Thomas Blomme,
{\ncsc Universit\'e de Gen\`eve, 5-7 rue du Conseil G\'en\'eral,
1205 Gen\`eve,
Switzerland} \\
\textit{Email :} thomas.blomme@unige.ch\\
 Victoria Schleis, {\ncsc Universit\"at T\"ubingen, Auf der Morgenstelle 10,
72076 T\"ubingen,
Germany} \\
\textit{Email :} victoria.schleis@student.uni-tuebingen.de}
\begin{document}
 
 
\title{Tropical M\"obius strips and ruled surfaces}
\author{Thomas Blomme, Victoria Schleis}

\begin{abstract}
We consider the enumeration of tropical curves in M\"obius strips for two different lattice structures and relate them to the enumeration of curves in two rational ruled surfaces over a complex elliptic curve. Using this correspondence, we prove regularity results such as the piecewise quasi-polynomiality of relative invariants and the quasi-modularity of their generating series.
\end{abstract}

\maketitle

\tableofcontents

\section{Introduction}

\subsection{Setting}

	\subsubsection{Ruled surfaces.} Ruled surfaces are a wide family of algebraic surfaces. They are defined as $\CC P^1$-bundles over a base curve which possess some section. One way to obtain such a surface is to consider the projective completion $\PP(\L\oplus\O)$ of a line bundle $\L$ over the base curve, where $\O$ denotes the trivial line bundle. If the base curve is of genus $0$, these are the only ruled surfaces, known as \textit{Hirzebruch surfaces}. As complex surfaces, they only depend on the absolute value of the degree of $\L$. If the base curve is of genus $1$, there are two additional ruled surfaces which are not obtained as a projective completion. See \cite{hartshorne2013algebraic} and Section \ref{sec-complex-setting} for more details. The classification for base curves of higher genus is more complicated.

	The enumerative (and relative) invariants of ruled surfaces over an elliptic curve obtained as the projective completion of a line bundle have already been studied using tropical methods in \cite{blomme2021floor}. We refer to \cite{brugalle2014bit} for a short introduction to tropical geometry and to \cite{gathmann2007numbers} for general tropical enumerative problems. Informally, the tropical world is a combinatorial counterpart to the complex one. \textit{Tropicalization} is a process to get tropical objects out of (families of) complex objects. Correspondence statements relate enumeration and invariants in both worlds. For more details, see Section \ref{sec-complex-setting}. In \cite{blomme2021floor} tropical curves on cylinders are studied, and \textit{tropical cylinders} are obtained as the quotient of $\RR^2$ by the $\ZZ$-action generated by
	$$\psi_{l,\delta,\alpha}:(x,y)\longmapsto (x+l,y+\delta x+\alpha),$$
	for some choice of $l,\alpha\in\RR$ and $\delta\in\ZZ$. Thus, they have a natural lattice structure with monodromy around the cylinder and $\RR$-bundles over $\RR/l\ZZ$. Tropical curves in the cylinder are graphs on the cylinder such that edges have integer slope and satisfy a balancing condition. This is a tropical counterpart to the construction of line bundles over an abelian variety (here an elliptic curve) as done in \cite{griffiths2014principles}. Now, we consider other free $\ZZ$-actions on $\RR^2$. Their quotient space is not a cylinder anymore, but a M\"obius strip. There are two lattice structures on a M\"obius strip obtainable this way, see Section \ref{sec-tropical-mobius-strips} for a more detailed construction. 
	\begin{itemize}[label=$\ast$]
	\item The first M\"obius strip $\TT M_0$ is obtained as quotient of $\RR^2$ by
	$$\varphi_0:(x,y)\longmapsto (x+l,-y).$$
	It is a tropicalization of a (family of) $2$-torsion line bundle over an elliptic curve. The difference to the tropicalization presented in \cite{blomme2021floor} is that the divisor sitting at infinity is some multisection of size $2$ rather than the two natural sections of the projective completion of the $2$-torsion line bundle. We denote its complex counterpart by $\CC M_0$.
	\item The second M\"obius strip $\TT M_1$ is obtained as the quotient of $\RR^2$ by
	$$\varphi_1:(x,y)\longmapsto(x+l,x-y).$$
	It is a tropicalization of the ruled surface $\PP(\mathscr{E})$, where $\mathscr{E}$ is a non-split plane bundle over an elliptic curve that fits the following short exact sequence:
	$$0\to\O\to\mathscr{E}\to \O(p)\to 0,$$
	see \cite{hartshorne2013algebraic} for more details on this ruled surface, which we denote by $\CC M_1$.
	\end{itemize}
	Perhaps surprisingly, these two M\"obius strips are not both \textit{tropicalizations} of the two additional ruled surfaces not obtained as the projective completion of a line bundle.

	\subsubsection{Enumerative problems.} 
	Our goal is to study the enumerative invariants of $\CC M_0$ and $\CC M_1$ relative to the boundary divisor using tropical methods. Consider the surface $\CC M_\delta$ (for $\delta=0$ or $1$), a homology class $\beta\in H_2(\CC M_\delta,\ZZ)$, (later determined by two half-integers $(a,b)$), and a positive number $g$. We wish to compute the number of curves of genus $g$ in homology class $\beta$ passing through $-K\cdot\beta+g-1$ points in general position, where $K$ is the canonical class. The choice of $(a,b)$ ensures that $-K\cdot\beta=2b$. A more modern way of defining these numbers is to consider \textit{log-Gromov-Witten invariants}, obtained by integrating some cohomology classes over a \textit{virtual fundamental class} on the moduli space of log-stable maps. This technology was introduced in \cite{abramovich2014stable,chen2014stable,gross2013logarithmic}.  See Section \ref{sec-complex-setting} for more details on these invariants.
	
	\smallskip
	
	Each surface $\CC M_\delta$ is endowed with the choice of a divisor $D$ which is a $2$-section, and numerically equivalent to the canonical class. The intersection number $D\cdot\beta$ is equal to $2b$. Thus, we can choose two partitions $\mu$ and $\nu$ such that $\|\mu\|+\|\nu\|=2b$. If $\mu$ is a partition, we set $\|\mu\|=\sum i\mu_i$ and $|\mu|=\sum\mu_i$. We then count curves of genus $g$ of class $\beta$ that have $\mu_i$ tangencies of order $i$ at fixed points on $D$, $\nu_i$ additional non-fixed tangencies of order $i$, and pass through $|\nu|+g-1$ points. The number of such curves can also be expressed in the setting of log-GW invariants, and is denoted by $\N^\delta_{g,aE+bF}(\mu,\nu)$ and called \emph{relative invariant}. If $\mu=\emptyset$ and $\nu=1^{2b}$, we recover the non-relative invariant.
	
	\smallskip
	
	Similar invariants have already been considered for toric surfaces, in particular Hirzebruch surfaces, see \cite{ardila2017double}. It is possible to study the count of curves of fixed genus in a fixed homology class passing through a suitable number of points using tropical methods and Mikhalkin's correspondence theorem \cite{mikhalkin2005enumerative}. The latter has been a groundbreaking result and lead to many generalizations and different proofs since, see \cite{nishinou2020realization, nishinou2006toric, shustin2002patchworking,shustin2004tropical,tyomkin2017enumeration}. Moreover, in the toric setting, the correspondence theorem allows the computation of  invariants relative to the toric boundary, which are obtained by counting curves which have a prescribed tangency profile with the boundary divisors.

	\smallskip
	
	The use of log-geometry techniques by T.~Nishinou and B.~Siebert \cite{nishinou2006toric} lead to the introduction of log-GW invariants \cite{abramovich2014stable,chen2014stable,gross2013logarithmic}. This setting allows to consider invariants that generalize usual GW invariants, and that can be defined for reducible surfaces. It can be shown that these invariants are constant in families, as done by T. Mandel and H. Ruddat in \cite[Appendix A]{mandel2020descendant} (also done in \cite{abramovich2020decomposition}). In some situations, the invariants of a reducible surface can be expressed in terms of the invariants for the components, leading to degeneration formulas and a correspondence between tropical curves and enumerative invariants. See for instance \cite{kim2018degeneration} for a proof of J. Li degeneration formula \cite{li2002degeneration} in the log-GW setting.
	
	\smallskip
	
	The correspondence theorem translates the algebraic enumerative problem into a combinatorial problem, which still needs to be solved. Several tools have been developed to tackle these tropical enumerative problems, including lattice path algorithms \cite{mikhalkin2005enumerative}, recursive formulas \cite{blomme2019caporaso,gathmann2007caporaso,gathmann2008kontsevich}, and floor diagrams algorithms \cite{blomme2021floor,blomme2022abelian3, brugalle2008floor}. These techniques can then be used to prove regularity results on the corresponding invariants. For instance, quasi-modularity of generating series \cite{blomme2021floor, boehm2018tropical}, piecewise polynomiality of relative invariants \cite{ardila2017double}, or the  polynomiality of coefficients of certain \textit{refined invariants} \cite{brugalle2020polynomiality}.

\subsection{Results}

In this paper, we apply tropical correspondence techniques to compute enumerative and relative invariants of the ruled surfaces $\CC M_0$ and $\CC M_1$ by studying tropical curves on tropical M\"obius strips. Due to the non-orientability of M\"obius strips, several new features appear when studying tropical curves. We then give a floor diagram algorithm enabling concrete computations and use the latter to prove several regularity statements.

	\subsubsection{Correspondence statement.} Using the decomposition formula from \cite{abramovich2020decomposition}, we give a correspondence statement in Theorem \ref{theorem-corresp} that relates the log-GW invariants of the considered ruled surfaces to the tropical count, i.e. we show $\N^\delta_{g,aE+bF}(\mu,\nu) = N^\delta_{g,aE+bF}(\mu,\nu)$, where the left-hand side denotes the log-GW invariant, and the right-hand side the tropical enumerative invariant. The proof proceeds by constructing a family of surfaces with a central fiber to which we can apply the decomposition formula. The proof follows the same steps and can be seen as a particular case of \cite{cavalieri2021counting} for Hirzebruch surfaces (but without $\psi$-classes), and \cite{bousseau2019tropical} (but without $\lambda$-classes). We refer to these references for more details on log-GW invariants and other applications of the decomposition formula.

	More precisely, the correspondence theorem assigns to each tropical curve a multiplicity. The latter is constructed as follows. We consider tropical curves of fixed genus and degree. We refer to Section \ref{sec-tropical-curves} for definitions concerning tropical curves. We fix a configuration of points $\P$, matching the complex enumerative problem. Consider a solution $h:\Gamma\to\TT M_\delta$ to the tropical problem. We show that the connected components of $\Gamma-h^{-1}(\P)$, i.e. the complement of point conditions, are of one of the following two types:
	\begin{enumerate}[label=(\roman*)]
	\item it contains a unique end and no cycles,
	\item it contains a unique disorienting cycle (realizing an odd homology class in the M\"obius strip) and no end.
	\end{enumerate}
	In the case of tropical curves in $\RR^2$ \cite{mikhalkin2005enumerative} or cylinders \cite{blomme2021floor}, only components of the first type can appear: cycles in the complement of marked points are orienting and yield a 1-parameter family of solutions. As the M\"obius strip is not orientable, components of the second kind  arise due to disorienting cycles, which cannot be deformed. The multiplicity is given by
	$$m_\Gamma=2^k\prod_V m_V,$$
	where the product is over the trivalent vertices of the curve and $m_V$ is the usual vertex multiplicity, i.e. the absolute value of the determinant of two out of the three outgoing slopes. The exponent $k$ is the number of connected components of type (ii).

	\subsubsection{Refined invariants.} The tropical multiplicity contains the product $\prod m_V$. It was suggested in \cite{block2016refined} to replace the vertex multiplicity $m_V$ by its quantum analog $[m_V]=\frac{q^{m_V/2}-q^{-m_V/2}}{q^{1/2}-q^{-1/2}}$, which is a Laurent polynomial in the variable $q$. In the toric setting, it was proven that this multiplicity leads to a tropical invariant, called \textit{refined} invariant in \cite{itenberg2013block}. This refinement was conjectured in \cite{gottsche2014refined} to coincide with a refinement of the Euler characteristic by the Hirzebruch genus of some relative Hilbert scheme. Since, these tropical refined invariants have instead been proven to be related to Gromov-Witten invariants with $\lambda$-classes insertions, see \cite{bousseau2019tropical}, and to a refined count of real curves in the genus $0$ case, see \cite{mikhalkin2017quantum}.
	
	Tropical refined invariants have been generalized both in the toric setting by varying the type of conditions \cite{gottsche2019refined,shustin2018refined}, and out of the toric surface setting in various situations \cite{blomme2021floor,blomme2021refinedtrop,blomme2022abelian1,blomme2022abelian2}. In the toric surface setting, refined invariants have been proven to satisfy some regularity assumptions: for fixed genus, the coefficients of fixed codegree (seen as a function in the degree) are ultimately polynomial functions, see \cite{brugalle2020polynomiality}. Moreover, the authors of \cite{brugalle2020polynomiality} conjecture that the generating series of these polynomials depends on the surface in a universal way. This can be interpreted as a dual G\"ottsche conjecture \cite{kool2011short}. Expanding the number of cases where it is possible to study the refined invariants helps understanding this conjecture outside of the toric setting. Adapting the proof of the correspondence from \cite{bousseau2019tropical} would yield a generalization of \cite[Theorem 1]{bousseau2019tropical} and relate the refined invariants to the generating series of log-GW invariants with a $\lambda$-class insertion. See Remark \ref{rem-lambda-class}. To avoid these technicalities, we prove in Theorem \ref{theorem-refined-invariance} the existence of tropical refined invariants in the M\"obius strip case by studying the tropical walls.

	\subsubsection{Floor diagram algorithm.} To compute and study the tropical enumerative invariants, one needs a concrete way of solving the tropical enumerative problem. In the toric setting for $h$-transverse polygons, such an algorithm is provided by \textit{floor diagrams}, introduced in \cite{brugalle2008floor}. The idea is to stretch the point constraints in the vertical direction, so that curves become \textit{floor-decomposed}. The enumeration of solutions thus become an enumeration of $1$-dimensional graphs called \textit{diagrams}, with some multiplicity. The techniques were later developed for different settings \cite{blomme2021floor,brugalle2015floor}. Floor decomposition can also be interpreted on the algebraic side using Li's degeneration formula \cite{li2002degeneration} by using degeneration of a surface to the normal bundle of some curve. See for instance \cite{brugalle2015floor} for such an application.

	In Section \ref{sec-floor-diag-algo}, we provide a floor diagram algorithm suited for tropical curves on a M\"obius strip. To some extent, it is similar to floor diagrams on a cylinder presented in \cite{blomme2021floor}, as the boundary of the M\"obius strip is also an elliptic curve. The difference lies in the fact that the M\"obius strip only has one boundary component. The floor diagrams are thus infinite only on one side, the other side acquires \textit{ground floors}, i.e. floors with a disorienting cycle, and \textit{joints}, i.e. edges passing through the center of the M\"obius strip. See Section \ref{sec-floor-diag-algo} for details.

	\subsubsection{Quasi-modularity of generating series.} We now get to the two main results of the paper, which are about the regularity of the generating series of the enumerative invariants. The first one concerns the generating series
	$$F^\delta_{g,b}(\mu,\nu)(y)=\sum_{\substack{a\in\frac{1}{2}\NN \\ 2\delta a\equiv 2b \text{ mod }2} }N^\delta_{g,aE+bF}(\mu,\nu)y^{2a}.$$

	\begin{theom}\ref{theorem-quasi-modularity}
	The generating series $F^\delta_{g,b}(\mu,\nu)$ are quasi-modular forms for some finite index subgroup of $SL_2(\ZZ)$.
	\end{theom}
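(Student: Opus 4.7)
The plan is to combine the floor diagram algorithm of Section \ref{sec-floor-diag-algo} with classical identities expressing lattice sums as quasi-modular forms, following the strategy developed for the cylinder case in \cite{blomme2021floor}. First, applying the correspondence Theorem \ref{theorem-corresp} together with the floor diagram algorithm, I would rewrite the tropical invariant $N^\delta_{g,aE+bF}(\mu,\nu)$ as a finite sum, indexed by combinatorial types of floor diagrams $\Dfk$ on the M\"obius strip with fixed genus $g$, fixed $b$, fixed tangency profiles $\mu,\nu$, and prescribed numbers of ground floors and joints. For each type $\Dfk$, the contribution splits into a product of local vertex/edge multiplicities (depending only on $\Dfk$) and a combinatorial factor counting the realizations, which amounts to counting integer points in a polytope determined by the position constraints.

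Second, for fixed $\Dfk$, I would separate the discrete data (shape of the diagram) from the continuous/positional data given by the horizontal coordinates of the floors modulo the M\"obius identification. The homology constraint imposes that the sum of the horizontal displacements equals $a$ up to the congruence $2\delta a\equiv 2b\bmod 2$. Summing the resulting expressions over $a\in \frac{1}{2}\NN$ in the correct congruence class yields, for each $\Dfk$, a multi-variable series of the form
$$\sum_{n_1,\dots,n_r\geq 1} P(n_1,\dots,n_r)\,\sigma_{k_1}(n_1)\cdots \sigma_{k_r}(n_r)\,y^{2(n_1+\cdots+n_r)},$$
where $P$ is a polynomial and the $\sigma_{k_i}$ are divisor sums arising from the enumeration of cycles/ends in the given floor type.

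Third, I would invoke the standard fact that series of the above shape are quasi-modular forms for a congruence subgroup of $SL_2(\ZZ)$, expressible as polynomials in the Eisenstein series $E_2,E_4,E_6$ and their images under level $N$ operators. Since $F^\delta_{g,b}(\mu,\nu)$ is a \emph{finite} sum (over combinatorial types of diagrams, of which there are finitely many for fixed $g,b,\mu,\nu$) of such quasi-modular forms, each for a congruence subgroup of bounded level, it is itself quasi-modular for a common finite-index subgroup.

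The main obstacle, specific to the M\"obius setting, will be the handling of ground floors and joints arising from disorienting cycles: these contribute factors of the multiplicity $2^k$ and lattice sums twisted by 2-torsion data, so the relevant congruence subgroup will necessarily have even level. One must verify that the disorienting cycle contribution still has a quasi-modular generating series, and that the congruence condition $2\delta a\equiv 2b\bmod 2$ in the summation index on $a$ only refines the level rather than destroys modularity. Once these disorienting contributions are checked to produce quasi-modular forms of a controlled level, the result follows from the finiteness of the number of combinatorial types and the stability of quasi-modularity under finite sums on compatible congruence subgroups.
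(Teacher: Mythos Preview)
Your overall strategy matches the paper's: decompose $N^\delta_{g,aE+bF}(\mu,\nu)$ as a finite sum over marked floor diagram types, then sum over $a$ and recognize each piece as a quasi-modular form. However, there is a genuine gap in the execution, and a mild confusion in the setup.

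First, the confusion. In the quasi-modularity argument you should fix the \emph{elevator weights} (hence the combinatorial type of the diagram, including the polynomial factor $W$ coming from edge weights and the $2^{N(\Dfk)}$), and vary only the \emph{floor degrees} $a_\F,a_\G$ subject to $\sum a_\F+\sum a_\G=a$. Your talk of ``horizontal coordinates of the floors modulo the M\"obius identification'' and ``counting integer points in a polytope'' belongs to the quasi-polynomiality proof (Theorem \ref{theorem-quasi-polynomiality}), not here. The generating series in $y^{2a}$ then factors cleanly as a product over floors, each floor contributing a one-variable series.

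Second, the gap. You write the resulting series as a sum of products of ordinary divisor sums $\sigma_{k_i}(n_i)$. For \'etages this is correct: the contribution of an \'etage $\F$ is $\sum_{a_\F\geq 1} a_\F^{\val\F-1}\sigma_1(a_\F)y^{2a_\F}$, a derivative of $G_2(y^2)$. But for a ground floor $\G$, the multiplicity from Definition \ref{definition diagram multiplicity} involves $\widetilde{\sigma_1}(2a_\G)=\sum_{k\mid 2a_\G,\ k\text{ odd}} \tfrac{2a_\G}{k}$, \emph{not} an ordinary $\sigma_k$. Moreover, for $\delta=1$ the congruence $2a_\G\equiv\varepsilon_\G\bmod 2$ restricts the sum to either odd or even values of $2a_\G$. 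So the ground floor series is (a derivative of) $H(y)=\sum_n\widetilde{\sigma_1}(n)y^n$, or one of its parity parts $H_0,H_1$. You flag this as ``the main obstacle'' but do not resolve it; the paper does, via the identity $H(y)=G_2(y)-G_2(y^2)$ (and then $H_0,H_1$ as even/odd parts), which exhibits $H,H_0,H_1$ explicitly as quasi-modular for a congruence subgroup. Without this lemma your argument is incomplete, and the appearance of $\widetilde{\sigma_1}$ is exactly why the statement only holds for a finite index subgroup rather than the full $SL_2(\ZZ)$.
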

	
	The difference to the corresponding statement in \cite{blomme2021floor} is that here, it is quasi-modular only for a \emph{finite index subgroup} of $SL_2(\ZZ)$. This is due to the shape of the diagrams, whose multiplicity can not be expressed polynomially in terms of the Eisenstein series $G_2(y)=\sum_{a=1}^\infty \sigma_1(a)y^a$ and its derivatives.
 
	\subsubsection{Piecewise (quasi-)polynomiality of relative invariants.} The second main result deals with the relative invariants. Regularity of relative invariants was previously studied for double Hurwitz numbers \cite{goulden2005towards} (a $1$-dimensional analog of our problem), Hirzebruch surfaces \cite{ardila2017double}, and line bundles over an elliptic curve \cite{blomme2021floor}. The regularity concerns the relative invariants where we fix the lengths $|\mu|,|\nu|$ of the partitions $\mu$ and $\nu$, and we consider their entries as variables. In \cite{ardila2017double, blomme2021floor, goulden2005towards}, it is shown that the relative invariants are piecewise polynomials in the entries of the partitions, using the existence of floor decomposition for the tropical enumerative problems.
	
	Using the floor diagrams introduced in Section \ref{sec-floor-diag-algo}, we obtain \emph{piecewise quasi-polynomiality} of the invariants. We fix the information of $g$, $a$, as well as the lengths of the partitions $|\mu|$ and $|\nu|$. Then, we consider the invariant $N^\delta_{g,aE+bF}(\mu,\nu)$ (with $2b=\|\mu\|+\|\nu\|$) as a function in the entries of the partition. we have the following.
	
	\begin{theom}\ref{theorem-quasi-polynomiality}
	As a function in the entries of $\mu$ and $\nu$, $N^\delta_{g,aE+bF}(\mu,\nu)$ is piecewise quasi-polynomial.
	\end{theom}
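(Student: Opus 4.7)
The plan is to apply the floor diagram algorithm of Section \ref{sec-floor-diag-algo} and adapt the arguments of \cite{ardila2017double} and \cite{blomme2021floor} to the M\"obius strip setting. With $g$, $a$, $|\mu|$ and $|\nu|$ fixed, the number of floors is bounded by $a$ (up to contributions from ground floors) and the number of elevators is bounded by a function of $g$, $|\mu|$ and $|\nu|$, so only finitely many \emph{combinatorial shapes} of floor diagrams can contribute to $N^\delta_{g,aE+bF}(\mu,\nu)$. Here a shape records the discrete data---number and type of floors (standard, ground), joints, and incidences of elevators and ends---but not the edge weights. The tropical count is then expressed as a finite sum, indexed by shapes together with orderings of their floors, of contributions depending on the entries of $\mu$ and $\nu$.

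For a fixed shape and ordering of floors along the elongated direction, the balancing condition together with the prescribed tangency partitions $\mu$ and $\nu$ determines the edge weights of the diagram as affine functions of the entries $\mu_i,\nu_i$. The multiplicity of the diagram is a product of vertex multiplicities (each a monomial in two adjacent edge weights) times the factor $2^k$, where $k$ counts the disorienting cycles; for a fixed shape this multiplicity is therefore a polynomial in the $\mu_i,\nu_i$ of bounded degree. The requirements that all edge weights be positive and that the chosen ordering be realizable by the point configuration are linear inequalities in the entries, cutting out a polyhedral chamber; on each such chamber the set of contributing shapes is constant and the total contribution is a polynomial, which gives the \emph{piecewise} structure.

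The \emph{quasi} prefix arises from parity constraints specific to the M\"obius setting. The existence of ground floors and joints (edges crossing the central circle of the strip) forces certain flows, and hence certain partial sums of the $\mu_i,\nu_i$, to be even; more generally the congruence $2\delta a\equiv 2b \pmod{2}$ inherited from the homology class translates into parity conditions on $\|\mu\|+\|\nu\|$. These conditions contribute indicator functions depending only on the residues of the $\mu_i,\nu_i$ modulo $2$, which is precisely the data of a quasi-polynomial. The main obstacle is the careful bookkeeping of ground floors and joints, features absent from the orientable cases of \cite{ardila2017double} and \cite{blomme2021floor}: one must verify that their combinatorics contributes only mod-$2$ periodic corrections on top of the polynomial behaviour in each chamber, and in particular does not produce extra non-quasi-polynomial dependence through the $2^k$ factor.
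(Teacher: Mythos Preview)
Your overall strategy---reduce to finitely many floor diagram shapes and show each shape contributes a piecewise quasi-polynomial---is the same as the paper's. But there is a genuine gap in the middle of your argument.

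You write that for a fixed shape ``the balancing condition together with the prescribed tangency partitions $\mu$ and $\nu$ determines the edge weights of the diagram as affine functions of the entries $\mu_i,\nu_i$.'' This is false in general. The balancing conditions are one linear equation per floor, and for a diagram whose underlying graph has first Betti number $h_1>0$ (which is the typical situation, since the genus of the curve equals the genus of the graph plus the number of floors), the solution set is an $h_1$-dimensional affine space. One must then \emph{sum} the polynomial multiplicity over all nonnegative integer weightings in that affine slice, and it is this sum---a weighted vector partition function---whose piecewise quasi-polynomiality is the actual content of the theorem. Example~\ref{expl-diag-with-non-poly-mult} in the paper makes this concrete: the interior weight $w$ is a free parameter ranging over an interval with a parity constraint, not a function of $\mu_1,\mu_2$.

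The paper handles this by setting up the adjacency matrix $A$ of (a modification of) the diagram, writing the admissible weightings as $\{\mathbf w\geqslant 0:A\mathbf w=\mathbf d\}$ with $\mathbf d$ built from $(\mu,\nu)$ and the ground-floor parities, and invoking the Sturmfels vector partition function theorem \cite{sturmfels1995vector}. That theorem gives piecewise quasi-polynomiality on the chamber complex of $A$, with the period lattice governed by the nonzero maximal minors of $A$; Lemma~\ref{lem-computation-minor-adjacency-matrix} then shows these minors are $0$, $\pm1$, or $\pm2$, which explains precisely why only mod-$2$ congruences appear. Your heuristic that ground floors and joints produce ``mod-$2$ periodic corrections'' is on the right track, but you need the vector partition machinery (or an equivalent lattice-point argument) to justify it; as written, your proof only covers the tree case. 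Two minor points: the bound on the number of floors comes from the genus and $|\mu|+|\nu|$, not from $a$; and the factor $2^k$ is constant on a fixed marked shape, so it causes no trouble.
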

	
	The latter means that there is some polyhedral subdivision such that on each cell, the function is given by the restriction of a \textit{quasi-polynomial}. A quasi-polynomial on $\ZZ^N$ is a function $f$ such that there exists a finite index sublattice $\Lambda$ and $f$ is polynomial on each equivalence $z+\Lambda$ of $\ZZ^N/\Lambda$. 
 
    We only get quasi-polynomiality due to the congruence conditions that appear in the description of the floor diagrams. It is possible to give an explicit description of the lattice $\Lambda$, and the quotient by $\Lambda$ is in fact a group whose elements are  $2$-torsion. Using this explicit description, we are able to prove piecewise polynomiality in several cases: if there is a unique point of maximal tangency contact (Corollary \ref{coro-one-end}), or if the genus $g$ is at most $2$ (Corollary \ref{coro-genus12}). If the genus is $3$, we show that the invariant is not piecewise polynomial (Example \ref{expl-diag-with-non-poly-mult} and Figure \ref{fig-floor-diag-nonpoly-mult}). 

    It would be interesting to study more relative invariants to see if piecewise quasi-polynmiality is a general feature.

\subsection{Structure}

The paper is organized as follows. We start by defining tropical M\"obius strips and the tropical curves they contain. We then consider the above mentioned enumerative problem and prove the existence of tropical invariants. The tropical invariants are then related to log-GW invariants defined in Section \ref{sec-complex-setting} via an application of the degeneration formula. The fifth section is devoted to the construction of the floor diagram algorithm. Floor diagrams are then used to prove the regularity statements for the corresponding invariants.

\smallskip

{\it Acknowledgements.}
We thank Hannah Markwig for feedback on an earlier draft and for helpful conversations.
Parts of this project were developed during the MSRI Summer School \emph{Tropical Geometry} in August 2022, and during a visit of T.B. to T\"ubingen in Fall 2022.
V.S. was supported by the Deutsche Forschungsgemeinschaft (DFG, German Research
Foundation), Project-ID 286237555, TRR 195.

\section{Tropical curves on M\"obius strips}\label{sec-tropical-mobius-strips}

\subsection{Tropical structures on M\"obius strips}

Topologically, a M\"obius strip is obtained from $[0;l]\times\RR$ by gluing together the two boundary components, reversing the orientation. The quotient obtained this way is a non-orientable surface. To endow it with a lattice structure, we regard it as a quotient of $\RR^2$ by the $\ZZ$-action generated by a fixed-point free orientation-reversing diffeomorphism.

\begin{prop}\label{prop: definition moebius strip}
There are two M\"obius strips (up to the choice of a positive real number $l$) obtained as the quotient of $\RR^2$ by a $\ZZ$-action:
\begin{itemize}[label=$\ast$]
\item the M\"obius strip $\TT M_0$, obtained as the quotient of $\RR^2$ by the action of
	$$\varphi_0:(x,y)\longmapsto (x+l,-y), \text{ and}$$
	\item the M\"obius strip $\TT M_1$, obtained as the quotient of $\RR^2$ by the action of
	$$\varphi_1:(x,y)\longmapsto (x+l,-y+x).$$
\end{itemize}
\end{prop}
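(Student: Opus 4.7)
My plan is to reduce the statement to an arithmetic classification of involutions in $GL_2(\ZZ)$, followed by a small computation for the translation part. First, for the lattice structure to descend to the quotient, $\varphi$ must preserve the integer affine structure of $\RR^2$. Its derivative then takes values in the discrete group $GL_2(\ZZ)$ and is constant by continuity, so $\varphi(v) = Av + b$ with $A \in GL_2(\ZZ)$, and the orientation-reversing hypothesis forces $\det A = -1$. For the quotient to be a M\"obius strip with a well-defined twist (so that $\varphi^2$ is a pure translation and the orientation double cover is a tropical cylinder), I impose $A^2 = I$; without this the monodromy never closes up and the quotient is not of the desired type.

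The core step is then to classify involutions $A \in GL_2(\ZZ)$ with $\det A = -1$ up to $GL_2(\ZZ)$-conjugation. Decomposing $\QQ^2 = V^+ \oplus V^-$ into the $\pm 1$-eigenspaces and setting $L^\pm = V^\pm \cap \ZZ^2$, the identity $2v = (v + Av) + (v - Av)$, together with the observation that $v \pm Av \in L^\pm$ for every $v \in \ZZ^2$, shows that $[\ZZ^2 : L^+ \oplus L^-] \in \{1,2\}$. In the index-$1$ case, generators $e^\pm$ of $L^\pm$ form a $\ZZ$-basis in which $A = \mathrm{diag}(1,-1)$, which is the linear part of $\varphi_0$. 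In the index-$2$ case, a normalization of the generators yields $w := \tfrac{1}{2}(e^+ + e^-) \in \ZZ^2$, and in the $\ZZ$-basis $(e^+, w)$ the involution becomes $A = \begin{pmatrix} 1 & 1 \\ 0 & -1 \end{pmatrix}$, matching the linear part of $\varphi_1$.

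Finally, for each linear part $A$, I determine the translation part $b$ up to equivalence. Conjugation by $v \mapsto v + c$ modifies $b$ by an arbitrary element of $\mathrm{Im}(I - A)$, while conjugation by $-I$ in the centralizer of $A$ flips the residual $b$ by a sign; freeness of the $\ZZ$-action additionally requires $b \notin \mathrm{Im}(A - I)$ and $(A + I)b \neq 0$. Combining these constraints, $b$ can be brought to $(l, 0)$ with $l > 0$ in both cases, recovering exactly $\varphi_0$ and $\varphi_1$, each parametrized by the single positive real $l$. I expect the main obstacle to be the integral classification of involutions, specifically the arithmetic argument pinning down the two possible indices and producing the canonical bases; once that is in hand, the translation-part analysis is a routine linear computation and the $l$-dependence emerges as the only surviving continuous parameter.
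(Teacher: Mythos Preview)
Your approach is correct but genuinely different from the paper's. The paper simply writes down the parametrized family $\varphi_{l,\pm,\delta,\alpha}(x,y)=(x+l,\pm y+\delta x+\alpha)$, then conjugates by $\left(\begin{smallmatrix}1&0\\a&1\end{smallmatrix}\right)$ to show that only $\delta\bmod 2$ matters and shifts $y$ to kill $\alpha$. You instead classify orientation-reversing involutions of $\ZZ^2$ intrinsically via the index $[\ZZ^2:L^+\oplus L^-]\in\{1,2\}$, and then normalize the translation. Your route is more conceptual: the index dichotomy is the genuine invariant distinguishing $\TT M_0$ from $\TT M_1$, and it makes clear that the classification does not rely on a preferred vertical direction. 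The paper's argument is quicker but leans (implicitly) on the fibration over $\RR/l\ZZ$ to justify the specific parametrized form.

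Two small points are worth tightening. First, your imposition of $A^2=I$ is justified more cleanly than you indicate: if $A-I$ were invertible then $v\mapsto Av+b$ would always have a fixed point, so fixed-point-freeness forces $1\in\mathrm{spec}(A)$; combined with $\det A=-1$ this gives eigenvalues $\{1,-1\}$ and hence $A^2=I$ by Cayley--Hamilton. No appeal to the topology of the quotient is needed. Second, in your index-$2$ case the basis $(e^+,w)$ produces $\left(\begin{smallmatrix}1&1\\0&-1\end{smallmatrix}\right)$, which is conjugate to but not equal to the linear part $\left(\begin{smallmatrix}1&0\\1&-1\end{smallmatrix}\right)$ of $\varphi_1$; take instead the basis $(w,-e^-)$ and you land exactly on the stated $\varphi_1$. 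Your conditions $b\notin\mathrm{Im}(A-I)$ and $(A+I)b\neq 0$ are in fact equivalent (both say the $V^+$-component of $b$ is nonzero), so one of them is redundant.
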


\begin{proof}
To induce a lattice structure on the quotient using the natural lattice structure of $\RR^2$, we  consider lattice preserving diffeomorphisms $\varphi:\RR^2\mapsto 
\RR^2$, i.e. whose derivative lies in $GL_2(\ZZ)$.Up to a change of coordinates, this forces $\varphi$ to be affine. As we additionally require it to be fixed point free, it is of the form
$$\varphi_{l,\pm,\delta,\alpha}:(x,y)\longmapsto(x+l,\pm y +\delta x+\alpha)$$
for some $l>0$, $\delta\in\ZZ$, choice of sign and $\alpha\in\RR$. The choice of $+y$ induces orientation preserving diffeomorphisms and leads to the case of cylinders considered in \cite{blomme2021floor}. We are left with the choice of $-y$, giving the possible lattice structures on M\"obius strips.

\smallskip

The conjugation by an invertible affine map preserving the vertical direction gives
$$\begin{pmatrix}
1 & 0 \\
a & 1 \\
\end{pmatrix}\begin{pmatrix}
1 & 0 \\
\delta & -1 \\
\end{pmatrix}\begin{pmatrix}
1 & 0 \\
-a & 1 \\
\end{pmatrix} = \begin{pmatrix}
1 & 0 \\
\delta+2a & -1 \\
\end{pmatrix}.$$
Thus, for non-orientable diffeomorphisms, only the value of $\delta$ mod $2$ matters. Up to a change of coordinates given by the translation $y=\tilde{y}+\alpha/2$, $\alpha$ can then be assumed to be $0$. Thus, we have two families of tropical M\"obius strips that only differ by a scaling factor inside each family.
\end{proof}

Concretely, both M\"obius strips are obtained  by gluing the two boundary components of $[0;l]\times\RR$ via $(0,y)\sim (l,-y)$. The strip $[0;l]\times\RR$  is a fundamental domain for the action of $\varphi_\delta$ on $\RR^2$. However, the monodromy of the lattice structure under the quotient differs: it is $\left(\begin{smallmatrix} 1 & 0 \\ 0 & -1 \\ \end{smallmatrix}\right)$ for $\TT M_0$   and  $\left(\begin{smallmatrix} 1 & 0 \\ 1 & -1 \\ \end{smallmatrix}\right)$ for $\TT M_1$. For instance, a curve with horizontal slope crossing the right boundary comes back from the left boundary with slope $0$ for $\TT M_0$ but with slope $1$ for $\TT M_1$, compare Figures \ref{figure curves TM0} and \ref{figure curves TM1}.

The projection onto the first coordinate makes both M\"obius strips fiber over the tropical elliptic curve $\TT E=\RR/l\ZZ$. Adding the points at top and bottom infinity, i.e. considering the action extended to $\RR\times[-\infty;+\infty]$, they can thus be seen as $\TT P^1$-bundles over a tropical elliptic curve, just as the cylinders from \cite{blomme2021floor}. It is natural to expect that they arise as the tropicalization of $\CC P^1$-bundles over an elliptic curve. We refer to Section \ref{sec-complex-setting} for this matter.

Both strips have two to one covers by tropical cylinders: $\TT M_0$ is covered by $\TT C_0=\RR^2/\langle\varphi_0^2\rangle$, which is the total space of the trivial line bundle over $\TT\widetilde{E}=\RR/2l\ZZ$; and $\TT M_1$ is covered by  $\TT C_1$, the the total space of the unique $2$-torsion tropical line bundle over $\TT\widetilde{E}$.

\subsection{Parametrized tropical curves}\label{sec-tropical-curves}

\subsubsection{Abstract tropical curves.} An \textit{abstract tropical curve} is a metric graph with unbounded edges called \textit{ends} that have infinite length and are only adjacent to a unique vertex. The number of edges adjacent to a vertex of $\Gamma$ is called its \textit{valence}. The edges that are not ends are required to have finite length and are called \textit{bounded edges}. We denote the set of vertices by $V(\Gamma)$, the set of edges by $E(\Gamma)$ and the set of all bounded edges by $E_b(\Gamma)$. The \textit{genus} of an abstract tropical curve is defined by $g=1-\chi(\Gamma)$, where $\chi$ is the Euler characteristic. If $\Gamma$ is connected, we say the tropical curve is \emph{irreducible}, and its genus is equal to its first Betti number.

The tropical curves considered in the paper do not have vertex genus. Such a generalization would be needed if we were to consider descendant invariants, as in \cite{cavalieri2021counting}.

\subsubsection{Parametrized tropical curves.} In the following, we write $\TT M_\delta$ for one of the two tropical M\"obius strips $\TT M_0$ or $\TT M_1$, and $\TT C_\delta$ for the associated two to one cover.

\begin{defi}
A \emph{parametrized tropical curve} inside $\TT M_\delta$ (resp. $\TT C_\delta$, $\RR^2$) is a map $h:\Gamma\to\TT M_\delta$ where $\Gamma$ is an abstract tropical curve, $h$ is affine with integer slope on the edges of $\Gamma$ such that at each vertex, the tropical curve is balanced, i.e. the sum of the outgoing slopes of $h$ is $0$.
\end{defi}

Here, \textit{slope} is the derivative $h'_e$ of $h$ along the edge $e$, and \textit{integer} means that $h'_e$ is in $\ZZ^2$ for any affine chart of the M\"obius strip. The \textit{weight} of an edge is the integral length of its slope.

The lattice structure on the tropical M\"obius strip defined above induces a monodromy. This induced symmetry preserves the direction of some vectors in $\RR^2$, determined by the eigenspaces of the matrices in Proposition \ref{prop: definition moebius strip}. For both $\TT M_\delta$, the vertical direction $\left(\begin{smallmatrix} 0 \\ 1 \\ \end{smallmatrix}\right)$ is preserved. For $\TT M_0$, the other eigenspace is the span of $\left(\begin{smallmatrix} 1 \\ 0 \\ \end{smallmatrix}\right)$, and for $\TT M_1$ it is the span of $\left(\begin{smallmatrix} 2 \\ 1 \\ \end{smallmatrix}\right)$. Thus, considering the \emph{vertical direction} for curves is well-defined, and by the \emph{horizontal direction}, we mean the other eigenspace.

\begin{expl}
In Figure \ref{figure curves TM0}, we draw tropical curves in the M\"obius strip $\TT M_0$. We have two natural choices of fundamental domain. First, we can choose the fundamental domain to be $[0;l]\times\RR$, depicted in $(a)$, $(b)$ and $(c)$: the two sides of the strip are identified by a reflection along the horizontal axis and a translation, and slopes change accordingly .

\smallskip

Alternatively, we can choose $[0;2l]\times\RR_{\geqslant 0}$ as a fundamental domain. It is obtained from the first fundamental domain by cutting along the dots in the first row of Figure \ref{figure curves TM0}, and gluing it back along the right vertical side. We obtain a domain resembling the cylinders from \cite{blomme2021floor} but infinite  only in one direction. This is depicted in $(a')$, $(b')$ and $(c')$. We call the bottom finite side of the M\"obius strip its \emph{soul}.
\end{expl}

\begin{figure}[h]
\begin{center}
\begin{tabular}{ccc}
\begin{tikzpicture}[line cap=round,line join=round,>=triangle 45,x=0.3cm,y=0.3cm,scale = 0.85]
\clip(-1,-8) rectangle (11,8);
\draw [line width=1pt] (0,-8)-- (0,8) node[midway,sloped] {$>>$};
\draw [line width=1pt] (10,-8)-- (10,8) node[midway,sloped] {$<<$};
\draw [line width=1pt,dotted] (0,0) -- (10,0);

\draw [line width=2pt] (0,-3)--++(1,0)--++(6,6)--(10,3);
\draw [line width=2pt] (1,-8)--(1,-3);
\draw [line width=2pt] (7,3)--(7,8);
\end{tikzpicture}
& \begin{tikzpicture}[line cap=round,line join=round,>=triangle 45,x=0.3cm,y=0.3cm,scale = 0.85]
\clip(-1,-8) rectangle (11,8);
\draw [line width=1pt] (0,-8)-- (0,8) node[midway,sloped] {$>>$};
\draw [line width=1pt] (10,-8)-- (10,8) node[midway,sloped] {$<<$};
\draw [line width=1pt,dotted] (0,0) -- (10,0);

\draw [line width=2pt] (0,2)--++(6,-6)--++(2,0)--++(2,2);
\draw [line width=2pt] (6,-8)--(6,-4);
\draw [line width=2pt] (8,-8)--(8,-4);
\end{tikzpicture}
& \begin{tikzpicture}[line cap=round,line join=round,>=triangle 45,x=0.3cm,y=0.3cm,scale = 0.85]
\clip(-1,-8) rectangle (11,8);
\draw [line width=1pt] (0,-8)-- (0,8) node[midway,sloped] {$>>$};
\draw [line width=1pt] (10,-8)-- (10,8) node[midway,sloped] {$<<$};
\draw [line width=1pt,dotted] (0,0) -- (10,0);

\draw [line width=2pt] (0,4)--++(2,0)--++(1,1)--++(1,-1)--++(4,0)--++(2,-2);
\draw [line width=2pt] (0,-2)--++(2,2)--++(2,0)--++(4,-4)--++(2,0);
\draw [line width=2pt] (2,0)--(2,4);
\draw [line width=2pt] (4,0)--(4,4);
\draw [line width=2pt] (3,5)--(3,8) node[midway,right] {$2$};
\draw [line width=2pt] (8,4)--(8,8);
\draw [line width=2pt] (8,-8)--(8,-4);
\end{tikzpicture}\\
$(a)$ & $(b)$ & $(c)$ \\
\begin{tikzpicture}[line cap=round,line join=round,>=triangle 45,x=0.3cm,y=0.3cm,scale = 0.85]
\clip(-1,-1) rectangle (11,12);
\draw [line width=1pt] (0,0)-- (0,15) node[midway,sloped] {$>>$};
\draw [line width=1pt] (10,0)-- (10,15) node[midway,sloped] {$>>$};
\draw [line width=1pt] (0,0) node {$\bullet$} -- (5,0) node[midway,sloped] {$>$} node {$\bullet$} -- (10,0) node[midway,sloped] {$>$} node {$\bullet$};

\draw [line width=2pt] (1,0)--++(2,2)--++(1,0)--++(2,-2);
\draw [line width=2pt] (3,2)--(3,15);
\draw [line width=2pt] (4,2)--(4,15);
\end{tikzpicture}
& \begin{tikzpicture}[line cap=round,line join=round,>=triangle 45,x=0.3cm,y=0.3cm,scale = 0.85]
\clip(-1,-1) rectangle (11,12);
\draw [line width=1pt] (0,0)-- (0,15) node[midway,sloped] {$>>$};
\draw [line width=1pt] (10,0)-- (10,15) node[midway,sloped] {$>>$};
\draw [line width=1pt] (0,0) node {$\bullet$} -- (5,0) node[midway,sloped] {$>$} node {$\bullet$} -- (10,0) node[midway,sloped] {$>$} node {$\bullet$};

\draw [line width=2pt] (0,4)--++(2,-2)--++(5,0)--++(3,3);
\draw [line width=2pt] (0,5)--++(1,1)--++(7,0)--++(2,-2);
\draw [line width=2pt] (2,0)--(2,2);
\draw [line width=2pt] (7,0)--(7,2);
\draw [line width=2pt] (1,6)--(1,12);
\draw [line width=2pt] (8,6)--(8,12);
\end{tikzpicture}
& \begin{tikzpicture}[line cap=round,line join=round,>=triangle 45,x=0.3cm,y=0.3cm,scale = 0.85]
\clip(-1,-1) rectangle (11,12);
\draw [line width=1pt] (0,0)-- (0,15) node[midway,sloped] {$>>$};
\draw [line width=1pt] (10,0)-- (10,15) node[midway,sloped] {$>>$};
\draw [line width=1pt] (0,0) node {$\bullet$} -- (5,0) node[midway,sloped] {$>$} node {$\bullet$} -- (10,0) node[midway,sloped] {$>$} node {$\bullet$};

\draw [line width=2pt] (1,0)--++(1,1)--++(3,0)--++(1,-1);
\draw [line width=2pt] (0,4)--++(2,0)--++(1,1)--++(1,-1)--++(1,0)--++(1,1)--++(2,0)--++(1,-1)--++(1,0);
\draw [line width=2pt] (2,1)--(2,4);
\draw [line width=2pt] (4,0)--(4,4);
\draw [line width=2pt] (5,1)--(5,4);
\draw [line width=2pt] (9,0)--(9,4);
\draw [line width=2pt] (3,5)--(3,12) node[midway,right] {$2$};
\draw [line width=2pt] (6,5)--(6,12);
\draw [line width=2pt] (8,5)--(8,12);
\end{tikzpicture}\\
$(a')$ & $(b')$ & $(c')$ \\
\end{tabular}
\caption{\label{figure curves TM0}Examples of tropical curves inside the M\"obius strip $\TT M_0$. In the first row we take the fundamental domain to be $[0;l]\times\RR$, and in the second row $[0;2l]\times\RR_{\geqslant 0}$.}
\end{center}
\end{figure}
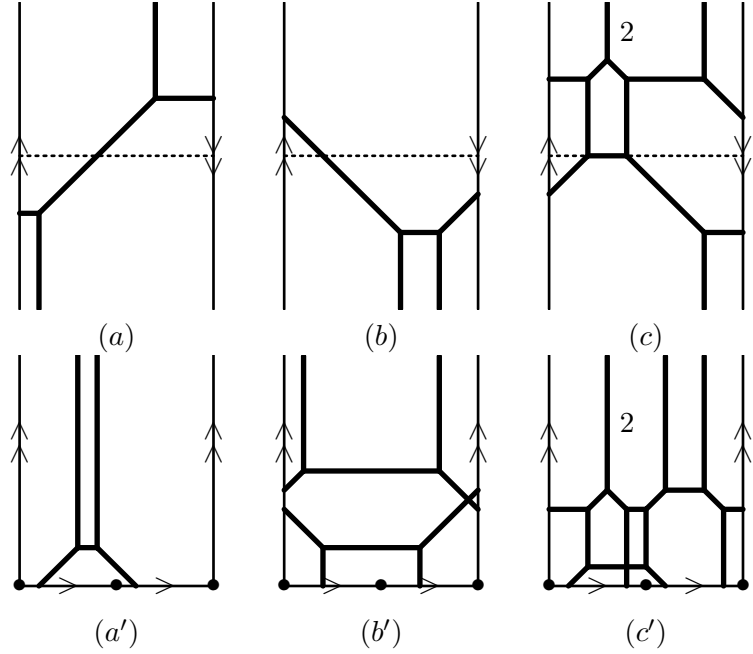

\begin{expl}
We proceed similarly for $\TT M_1$. Taking $[0;l]\times\RR$ as a fundamental domain, the gluing is the same as for $\TT M_0$, but the monodromy of the lattice structure is different. We give examples in the first row of Figure \ref{figure curves TM1}. Choosing  the fundamental domain $\{0\leqslant x\leqslant 2l,2y\geqslant x\}$, or $\{0\leqslant x\leqslant 2l,y\geqslant \max(0,x-l)\}$, we give examples in the second row of Figure \ref{figure curves TM1}.
\end{expl}

\begin{figure}[h]
\begin{center}
\begin{tabular}{ccc}
\begin{tikzpicture}[line cap=round,line join=round,>=triangle 45,x=0.3cm,y=0.3cm,scale = 0.85]
\clip(-1,-10) rectangle (11,10);
\draw [line width=1pt] (0,-10)-- (0,10) node[midway,sloped] {$>>$};
\draw [line width=1pt] (10,-10)-- (10,10) node[midway,sloped] {$<<$};
\draw [line width=1pt,dotted] (0,0) -- (10,0);

\draw [line width=2pt] (0,-3)--++(6,6)--++(4,0);
\draw [line width=2pt] (6,3)--(6,10);
\end{tikzpicture}
& \begin{tikzpicture}[line cap=round,line join=round,>=triangle 45,x=0.3cm,y=0.3cm,scale = 0.85]
\clip(-1,-10) rectangle (11,10);
\draw [line width=1pt] (0,-10)-- (0,10) node[midway,sloped] {$>>$};
\draw [line width=1pt] (10,-10)-- (10,10) node[midway,sloped] {$<<$};
\draw [line width=1pt,dotted] (0,0) -- (10,0);

\draw [line width=2pt] (0,2.5)--++(10,0);
\draw [line width=2pt] (0,-2.5)--++(10,10);
\draw [line width=2pt] (0,-7.5)--(5,-7.5)--++(5,5);
\draw [line width=2pt] (5,-7.5)--(5,-10);
\end{tikzpicture}
& \begin{tikzpicture}[line cap=round,line join=round,>=triangle 45,x=0.3cm,y=0.3cm,scale = 0.85]
\clip(-1,-10) rectangle (11,10);
\draw [line width=1pt] (0,-10)-- (0,10) node[midway,sloped] {$>>$};
\draw [line width=1pt] (10,-10)-- (10,10) node[midway,sloped] {$<<$};
\draw [line width=1pt,dotted] (0,0) -- (10,0);

\draw [line width=2pt] (0,1)--++(2,2)--++(4,-4)--++(4,0);
\draw [line width=2pt] (0,6)--++(2,-2)--++(1,1)--++(4,0)--++(3,3);
\draw [line width=2pt] (0,-8)--++(8,0)--++(2,4);
\draw [line width=2pt] (8,-10)-- node[midway,right] {$2$} (8,-8);
\draw [line width=2pt] (6,-10)--(6,-1);
\draw [line width=2pt] (2,3)--(2,4) node[midway,right] {$2$};
\draw [line width=2pt] (3,5)--(3,10);
\draw [line width=2pt] (7,5)--(7,-10);
\end{tikzpicture}\\
$(a)$ & $(b)$ & $(c)$ \\
\begin{tikzpicture}[line cap=round,line join=round,>=triangle 45,x=0.3cm,y=0.3cm,scale = 0.85]
\clip(-1,-1) rectangle (11,16);
\draw [line width=1pt] (0,0)-- (0,15) node[midway,sloped] {$>>$};
\draw [line width=1pt] (10,5)-- (10,20) node[midway,sloped] {$>>$};
\draw [line width=1pt] (0,0) node {$\bullet$} -- (5,2.5) node[midway,sloped] {$>$} node {$\bullet$} -- (10,5) node[midway,sloped] {$>$} node {$\bullet$};

\draw [line width=2pt] (2,1)--++(2.5,2.5)--(7,3.5);
\draw [line width=2pt] (4.5,3.5)--(4.5,15);
\end{tikzpicture}
& \begin{tikzpicture}[line cap=round,line join=round,>=triangle 45,x=0.3cm,y=0.3cm,scale = 0.85]
\clip(-1,-1) rectangle (11,16);
\draw [line width=1pt] (0,0)-- (0,15) node[midway,sloped] {$>>$};
\draw [line width=1pt] (10,5)-- (10,20) node[midway,sloped] {$>>$};
\draw [line width=1pt] (0,0) node {$\bullet$} -- (5,0) node[midway,sloped] {$>$} node {$\bullet$} -- (10,5) node[midway,sloped] {$>$} node {$\bullet$};

\draw [line width=2pt] (2,0)--++(2,2)--++(3,0);
\draw [line width=2pt] (4,2)--(4,15);
\end{tikzpicture}
& \begin{tikzpicture}[line cap=round,line join=round,>=triangle 45,x=0.3cm,y=0.3cm,scale = 0.85]
\clip(-1,-1) rectangle (11,16);
\draw [line width=1pt] (0,0)-- (0,15) node[midway,sloped] {$>>$};
\draw [line width=1pt] (10,5)-- (10,20) node[midway,sloped] {$>>$};
\draw [line width=1pt] (0,0) node {$\bullet$} -- (5,0) node[midway,sloped] {$>$} node {$\bullet$} -- (10,5) node[midway,sloped] {$>$} node {$\bullet$};

\draw [line width=2pt] (1,0)--++(1,1)--++(4,0);
\draw [line width=2pt] (2,0)--++(2,2)--++(3,0);
\draw [line width=2pt] (0,4)--++(2,0)--++(1,1)--++(1,2)--++(1,3)--++(2,0)--++(1,-1)--++(2,0);
\draw [line width=2pt] (2,1)--(2,4);
\draw [line width=2pt] (3,0)--(3,5);
\draw [line width=2pt] (4,2)--(4,7);
\draw [line width=2pt] (8,3)--(8,9);
\draw [line width=2pt] (5,10)--(5,15) node[midway,left] {$3$};
\draw [line width=2pt] (7,10)--(7,15);
\end{tikzpicture}\\
$(a')$ & $(b')$ & $(c')$ \\
\end{tabular}
\caption{\label{figure curves TM1}Examples of tropical curves inside the M\"obius strip $\TT M_1$.}
\end{center}
\end{figure}
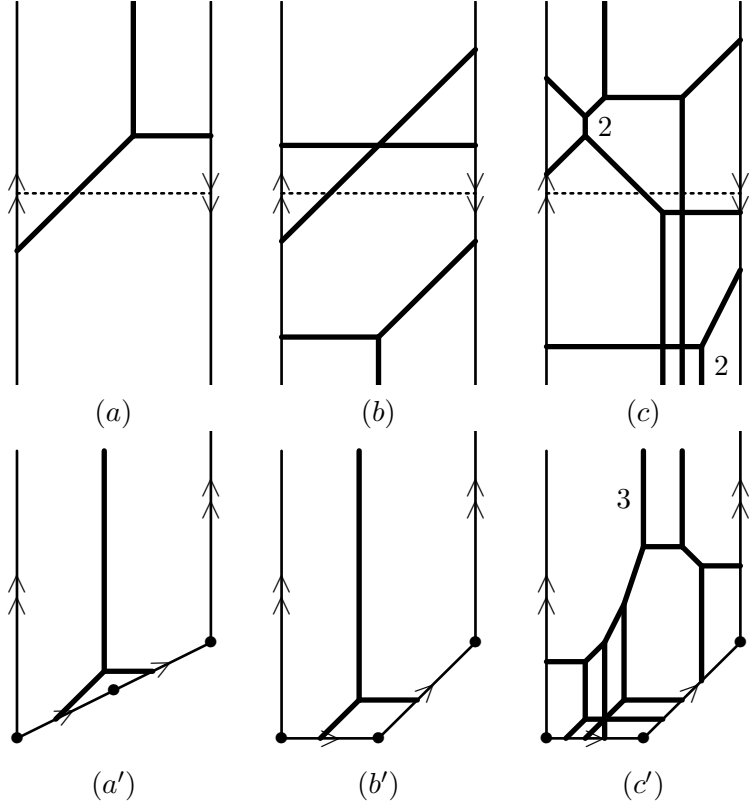

\subsubsection{Degree of a tropical curve.} The degree of a parametrized tropical curve is defined as the class inside the tropical homology group $H_{1,1}(\TT M_\delta,\ZZ)$ realized by the tropical cycle. See \cite{itenberg2019tropical} for a definition of the tropical homology groups. In our case, this homology group is isomorphic to $\ZZ^2$. Moreover, this group has a well-defined intersection form which is unimodular by tropical Poincar\'e duality. It is thus possible to recover the degree of a tropical curve by intersecting it with cycles that form a basis of $H_{1,1}(\TT M_\delta,\ZZ)$.

Both homology groups contain the class $F$ of a fiber (red in Figure \ref{figure computation degree}), and the class $E$ (blue in Figure \ref{figure computation degree}) of the boundary of the M\"obius strip. These two classes satisfy $E\cdot F=2$ since a fiber intersects the boundary of the M\"obius strip twice. Thus, $E$ and $F$ span an index $2$ sublattice of $H_{1,1}(\TT M_\delta,\ZZ)$. They also form a basis of the group if we allow the coefficients to be rational. To describe  $H_{1,1}(\TT M_\delta,\ZZ)$ fully, we need to consider the cases of $\TT M_0$ and $\TT M_1$ separately.

\begin{itemize}[label=$\ast$,noitemsep]
\item For $\TT M_0$, the M\"obius strip contains the tropical elliptic curve that goes around the strip once (in horizontal direction), contained in the soul of the M\"obius strip. As this curve intersects the fiber $F$ only once and  does not intersect the boundary, its class can be expressed as $\frac{1}{2}E$. Thus, elements of $H_{1,1}(\TT M_0,\ZZ)$ can be written as
$$aE+bF,\ a\in\frac{1}{2}\ZZ,b\in\ZZ.$$

\item For $\TT M_1$, we have the class $C_0$ of a curve as depicted in Figure \ref{figure curves TM1} $(a)$. It has intersection $1$ with both $E$ and $F$, thus $C_0=\frac{E+F}{2}$. Hence, elements in $H_{1,1}(\TT M_1,\ZZ)$ can be written as
$$aE+bF,\ a,b\in\frac{1}{2}\ZZ,a+b\in\ZZ.$$
\end{itemize}

The description can be unified in the following way: the lattice is the set of $aE+bF$ with $a,b\in\frac{1}{2}\ZZ$ such that
$$2b\equiv 2\delta a\ \mathrm{mod}\ 2.$$
In both cases, a curve in the class $aE+bF$ has $2b$ ends (counted with weights) since its intersection with $E$ is equal to $2b$, and its intersection with $F$ is equal to $2a$.

\begin{expl}\label{ex degrees on TM0}
We consider the curves in Figure \ref{figure curves TM0} on $\TT M_0$. We compute their degree by intersecting with $E$ and $F$, obtaining respectively
	$$\begin{array}{rlrl}
	(a) & \frac{1}{2}E+F & (a') & \frac{1}{2}E+F \\
	(b) & \frac{1}{2}E+F & (b') & 2E+F \\
	(c) & E+2F & (c') & \frac{3}{2}E+2F . \\
	\end{array}$$
Notice that when considering the second fundamental domain, fibers have two ends going to top infinity, both meeting the top cycle, as depicted in Figure \ref{figure computation degree}.
\end{expl}

\begin{figure}
\begin{center}
\begin{tikzpicture}[line cap=round,line join=round,>=triangle 45,x=0.3cm,y=0.3cm,scale = 0.85]
\clip(-1,-1) rectangle (11,12);
\draw [line width=1pt] (0,0)-- (0,15) node[midway,sloped] {$>>$};
\draw [line width=1pt] (10,0)-- (10,15) node[midway,sloped] {$>>$};
\draw [line width=1pt] (0,0) node {$\bullet$} -- (5,0) node[midway,sloped] {$>$} node {$\bullet$} -- (10,0) node[midway,sloped] {$>$} node {$\bullet$};
\draw[line width=2pt, blue](0,11.75)--(10,11.75);

\draw [line width=2pt] (1,0)--++(1,1)--++(3,0)--++(1,-1);
\draw [line width=2pt] (0,4)--++(2,0)--++(1,1)--++(1,-1)--++(1,0)--++(1,1)--++(2,0)--++(1,-1)--++(1,0);
\draw [line width=2pt] (2,1)--(2,4);
\draw [line width=2pt] (4,0)--(4,4);
\draw [line width=2pt] (5,1)--(5,4);
\draw [line width=2pt] (9,0)--(9,4);
\draw [line width=2pt] (3,5)--(3,12) node[midway,right] {$2$};
\draw [line width=2pt] (6,5)--(6,12);
\draw [line width=2pt] (8,5)--(8,12);

\draw [line width=2pt,red] (2.5,0)--(2.5,12);
\draw [line width=2pt,red] (7.5,0)--(7.5,12);
\end{tikzpicture}
\end{center}
\caption{\label{figure computation degree} A curve in the class $\frac{3}{2}E+2F$ in $\TT M_0$, along with a (red) fiber of the projection to $E=\RR/l\ZZ$ and the boundary $E_{\infty}$ marked in blue.}
\end{figure}
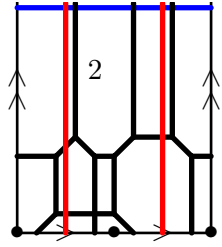

\begin{expl}\label{ex degrees on TM1}
The classes of the curves in $\TT M_1$ depicted in Figure \ref{figure curves TM1} are as follows:
$$\begin{array}{rlrl}
	(a) & C_0=\frac{1}{2}E+\frac{1}{2}F & (a') & C_0 \\
	(b) & E+C_0=\frac{3}{2}E+\frac{1}{2}F & (b') & C_0 \\
	(c) & E+2F+C_0 = \frac{3}{2}E+\frac{5}{2}F & (c') & 2E+2F . \\
	\end{array}$$
\end{expl}

\begin{defi}
A parametrized tropical curve in the class $aE+bF$ is said to have tangency profile $\mu\vdash 2b$ (i.e. $\mu$ is a partition of $2b$) if it has $\mu_i$ ends of weight $i$ for each $i$.
\end{defi}

\begin{expl}
The curves in Figure \ref{figure curves TM0} $(c)$ and $(c')$ have tangency profile $\mu=2^1 1^2$, and the curve in Figure \ref{figure curves TM1} $(c')$ has tangency profile $\mu=3^1 1^1$.
\end{expl}

\subsubsection{Cutting procedure.}\label{sec Cutting procedure} As in case of cylinders \cite{blomme2021floor} and abelian surfaces \cite{blomme2022abelian1}, we can construct a parametrized tropical curve $\widehat{h}:\widehat{\Gamma}\to\RR^2$ on $\RR^2$ from a parametrized tropical curve  $h:\Gamma\to\TT M_\delta$. To do so, we remove a finite set of points $\Q$ from $\Gamma$ such that $h$ restricted to the complement of $\P$ can be lifted to $\RR^2$,  the universal cover of the M\"obius strip. Such a set is called \textit{addmissible}. We then extend edges cut in the process to get ends. We refer to the corresponding sections of \cite{blomme2021floor} and \cite{blomme2022abelian1} for more details.

\begin{rem}\label{remark induced menelaus}
    In the toric setting as well as in the cylinder case, we have a \textit{Menelaus relation} between the positions of the ends of a tropical curve. In the toric  case \cite[Proposition 39]{mikhalkin2017quantum}, the relation comes from the balancing condition. More precisely, let $\Gamma$ be a tropical curve in $\RR^2$. Then, each end $e$ has an associated \textit{moment}, defined as the determinant of the outgoing slope of $e$ and any point $p\in e$. The Menelaus theorem asserts that the sum of moments is $0$. For cylinders \cite{blomme2021floor}, it follows by applying the toric tropical Menelaus relation to the tropical curve in $\RR^2$ obtained by cutting as above. Since these relations come from the existence of an area form, there is a priori no Menelaus relation in the M\"obius strip case. 
   
    Using the cutting procedure, we can still use the information of the Menelaus relation between the positions of ends in vertical direction in $\RR^2$ to infer information on the positions of ends in vertical direction on the M\"obius strip. We call the relations arising from this procedure the \emph{induced Menelaus relations} on the M\"obius strip. For curves in tropical M\"obius strips, we define the moment of an end to be the coordinate of its projection onto the elliptic curve. 
\end{rem}

\subsection{Dimension of the moduli space of curves}

We compute the dimension of the deformation space of tropical curves in a M\"obius strip $\TT M_\delta$. We will only consider parametrized curves $h:\Gamma\to\TT M_\delta$ for which $h$ is an immersion.

\begin{defi}
A parametrized tropical curve $h:\Gamma\to\TT M_\delta$ is \textit{simple} if $\Gamma$ is trivalent and $h$ is an immersion.
\end{defi}
 In particular, a simple tropical curve has no contracted edge (edge with slope $0$) nor flat vertex, which is a vertex all of whose adjacent edges have the same slope.
 
As previously done for cylinders, \cite{blomme2021floor}, we can characterize curves without ends.

\begin{prop}
Let $h:\Gamma\to\TT M_\delta$ be a parametrized tropical curve for which $h$ is an immersion, and that has no end. Then $\Gamma$ has genus $1$, a unique (bounded) edge of slope $\left(\begin{smallmatrix} w \\ 0 \\ \end{smallmatrix}\right)$ if on $\TT M_0$, or slope $\left(\begin{smallmatrix} 2w \\ w \\ \end{smallmatrix}\right)$ if on $\TT M_1$.
\end{prop}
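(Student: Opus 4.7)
The plan is to use a monodromy anti-invariant ``height function'' on each M\"obius strip to measure distance from the soul. For $\TT M_0$, the linear form $y$ changes sign under $\varphi_0$, so $\eta := |y|$ descends to a well-defined continuous function on $\TT M_0$. For $\TT M_1$, a direct calculation shows that $\varphi_1$ sends $2y - x + l/2$ to its negative, so $\eta := |2y - x + l/2|$ descends to $\TT M_1$. In both cases the level sets $\{\eta = c\}$ for $c > 0$ are closed loops tangent to the $(+1)$-eigendirection of the monodromy---namely $(1,0)$ for $\delta = 0$ and $(2,1)$ for $\delta = 1$---while the zero locus is the soul.

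Since $\Gamma$ has no ends it is compact, so $\eta \circ h$ attains a maximum $M$, necessarily at some vertex $V$ of $\Gamma$. After disposing of the trivial case $M = 0$, in which the curve is forced onto the soul, one may assume $M > 0$ and work in a local chart around $V$ in which $\eta$ coincides with $\pm f$ for the underlying affine form $f$. The core step is a balancing argument: at the local maximum $V$, the directional derivative $d\eta(v_i)$ along each outgoing slope $v_i$ is nonpositive, while the balancing relation $\sum_i v_i = 0$ gives $\sum_i d\eta(v_i) = d\eta(0) = 0$. Hence every $d\eta(v_i)$ vanishes, which for $\delta = 0$ forces the vertical coordinate of each slope to be zero, and for $\delta = 1$ forces $2b_i - a_i = 0$; combined with integrality of slopes, these conditions pin each outgoing slope down to $(w_i, 0)$ and $(2w_i, w_i)$ respectively.

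Next I would propagate this observation. Along a horizontal edge the form $f$ is constant (horizontal vectors lie in its kernel), so the other endpoint is again a max vertex and the same analysis applies. By connectedness of $\Gamma$, every vertex is a max vertex and every edge is horizontal; hence $h(\Gamma)$ lies in a single horizontal loop of $\TT M_\delta$. Since at every vertex all outgoing slopes are collinear, the immersion hypothesis forces each vertex to be $2$-valent with opposite slopes $\pm v$ (higher valence would produce two outgoing edges in the same direction, contradicting local injectivity); merging these straight pass-through vertices reduces $\Gamma$ to a single vertex and a single edge loop of the asserted slope, and $\chi(\Gamma) = 1 - 1 = 0$ gives genus $1$.

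The main delicacy, I expect, will be the $\delta = 1$ case: identifying $2y - x + l/2$ as the correct $\varphi_1$-anti-invariant form (the constant $l/2$ is essential for anti-invariance, not merely invariance of $|f|$), and then extracting an integer slope $(2w, w)$ from the linear condition $2b - a = 0$ rather than the non-lattice direction $(1, 1/2)$. The $\delta = 0$ case is a fairly standard extremum argument for tropical curves, but the twisted monodromy of $\TT M_1$ rotates the horizontal direction off the coordinate axes and requires some care.
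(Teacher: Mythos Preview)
Your argument is correct and takes a genuinely different route from the paper. The paper's proof lifts (after possibly passing to a double cover of $\Gamma$) to the cylinder cover $\TT C_\delta\to\TT M_\delta$ and then invokes the classification of end-free immersed tropical curves in cylinders already established in \cite{blomme2021floor}. Your approach instead works intrinsically on $\TT M_\delta$: you identify the $(-1)$-eigendirection of the monodromy, build the anti-invariant affine form ($y$ for $\delta=0$, $2y-x+l/2$ for $\delta=1$) whose absolute value descends to a height function, and run a maximum-principle argument together with balancing to force every edge into the $(+1)$-eigendirection. The paper's proof is shorter but outsources the content to the cylinder reference; your proof is self-contained and makes the role of the monodromy eigendecomposition explicit, which is conceptually pleasant. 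Two minor remarks: your appeal to ``local injectivity'' to exclude valence $\geqslant 3$ is exactly the paper's notion that an immersion has no flat vertex (so you are using the hypothesis correctly), and both your argument and the paper's implicitly assume $\Gamma$ is connected to obtain a \emph{unique} edge.
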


\begin{proof}
Let $\TT C_\delta\to\TT M_\delta$ be the two-to-one cover of the M\"obius strip by a tropical cylinder $\TT C_\delta$. Up to taking a two-to-one cover of $\Gamma$, we can lift the tropical curve $h:\Gamma\to\TT M_\delta$ to a tropical curve $\widetilde{h}:\widetilde{\Gamma}\to\TT C_\delta$. The lift $\widetilde{h}$ is an immersion and $\widetilde{\Gamma}$ still has no end. These have been characterized in \cite{blomme2021floor}:
\begin{itemize}[label=-]
\item For $\TT C_0$, they are of the form $t\mapsto (w\cdot t,c)\in\TT C_0$, where $c\in\RR$ and $n\in\NN$. It is a curve that goes around the cylinder direction $k$ times with slope $\left(\begin{smallmatrix} w \\ 0 \\ \end{smallmatrix}\right)$.
\item For $\TT C_1$, they are of the form $t\mapsto (2w\cdot t,w\cdot t)$.
\end{itemize}
Both maps are periodic, and can be made compact by quotienting by a sublattice of the periods.
\end{proof}
In particular, we get two types of genus $1$ curves without ends: fixed curves meeting the soul of the M\"obius strip an odd number of times, and curves that have a 1-dimensional deformation space. The latter are obtained as image of curves without ends in the cover by $\TT C_\delta$.

 For what follows, let $\M_{g,n}(\TT M_\delta,(a,b),\mu)$ be the moduli space of genus $g$ parametrized tropical curves in the class $aE+bF$ with tangency profile $\mu\vdash 2b$ and $n$ marked points in $\TT M_\delta$. It is a polyhedral complex whose faces correspond to combinatorial types of maps, i.e. shapes of graphs $\Gamma$ with a choice of slope of $h$ on them. Each curve can be parametrized by the choice of edge lengths and the image of a vertex inside $\TT M_\delta$.

\begin{prop}\label{prop description moduli space}
The dimension of the subspace of $\M_{g,n}(\TT M_\delta,(a,b),\mu)$ parametrizing  simple tropical curves is $|\mu|+g-1+n$, where $|\mu|=\sum\mu_i$ is the length of the partition $\mu\neq\emptyset$.
\end{prop}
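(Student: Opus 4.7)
The plan is to follow the standard tropical dimension count: parametrize each combinatorial cell by edge lengths, a root position, and marked point positions, and impose a closure condition for each independent cycle.

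First, I fix a combinatorial type, i.e.\ a connected graph $\Gamma$ equipped with a slope vector for each edge giving rise to a simple parametrized curve in class $aE+bF$ with tangency profile $\mu$. Let $V$ and $E_b$ denote the number of vertices and bounded edges of $\Gamma$. Since each end is contractible to its unique adjacent vertex, $\chi(\Gamma) = V - E_b$, so the genus condition reads $g = 1 - V + E_b$. The simplicity hypothesis forces $\Gamma$ to be trivalent with no contracted edges, whence $3V = 2E_b + |\mu|$. Solving the two relations yields
$$V = 2g - 2 + |\mu|, \qquad E_b = 3g - 3 + |\mu|.$$

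Next, I describe the cell of $\M_{g,n}(\TT M_\delta,(a,b),\mu)$ associated with this combinatorial type. After choosing a spanning tree $T$ of $\Gamma$ and a root vertex $v_0$, the cutting procedure of Section \ref{sec Cutting procedure} allows the lifting of $h|_T$ to a map into the universal cover $\RR^2$. The image of every vertex is then determined by the image $h(v_0) \in \TT M_\delta$ (two parameters) and the lengths of the bounded edges of $T$ (which are $V - 1$ parameters). Adjoining the $g = E_b - V + 1$ lengths of the non-tree bounded edges and the $n$ positions of the marked points along their edges gives $E_b + 2 + n$ free parameters. For each non-tree edge, requiring that the corresponding cycle of $\Gamma$ closes up correctly in the universal cover (i.e.\ reaches the prescribed translate of the starting point) imposes a linear system of two equations in these parameters.

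The main technical point is the independence of the $2g$ resulting closure equations. This is the standard tropical argument: under the immersion hypothesis the slopes along each cycle span $\RR^2$ generically, so the differential of the closure map has full rank, exactly as in the toric setting \cite{mikhalkin2005enumerative} and the cylinder setting \cite{blomme2021floor}. The monodromy of the lattice of $\TT M_\delta$ only shifts the target translate of each lifted cycle, without affecting the rank of these differentials. Subtracting the $2g$ independent conditions yields
$$\dim = E_b + 2 + n - 2g = (3g - 3 + |\mu|) + 2 + n - 2g = |\mu| + g - 1 + n.$$
Since all simple combinatorial types share the same values of $V$ and $E_b$, every top-dimensional cell of the simple locus has this dimension, proving the claim.
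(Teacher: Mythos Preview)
Your approach differs from the paper's. You parametrize a cell directly by edge lengths, a root position, and marked point positions, then impose $2g$ cycle-closure constraints. The paper instead uses the cutting procedure of Section~\ref{sec Cutting procedure} to open $\Gamma$ into a planar curve $\widehat\Gamma$ of genus $g-|\Q|$ with $|\mu|+2|\Q|$ ends, invokes Mikhalkin's planar dimension formula for $\widehat\Gamma$, and then imposes only $|\Q|$ one-dimensional matching conditions $\mu_e+(-1)^{\lambda_e}\mu_{e'}=0$ on the moments of the paired new ends. The hypothesis $\mu\neq\emptyset$ guarantees the presence of a vertical end not among the cut ends, so no linear combination of these $|\Q|$ relations can reproduce the global Menelaus relation, whence they are independent.

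There is a genuine gap in your independence step. The assertion that ``the monodromy of the lattice of $\TT M_\delta$ only shifts the target translate of each lifted cycle'' is false for disorienting cycles. If a cycle realizes an odd class $k\in\pi_1(\TT M_\delta)\simeq\ZZ$, lifting it to $\RR^2$ from a base point $P$ lands at $\varphi_\delta^k(P)$, and for odd $k$ this is a glide reflection, not a translation. On $\TT M_0$, for instance, the second closure equation for such a cycle reads $\sum_i\ell_i\tilde u_i^{(2)}+2P^{(2)}=0$, which involves the vertical coordinate of the root together with the lengths of the tree edges on the path from the root to the cycle. The coefficient matrix of your $2g$ equations therefore does \emph{not} coincide with the one in the toric or cylinder case, and the citations do not cover it. The extra $y_0$-dependence is in fact exactly what makes a disorienting loop rigid (cf.\ Proposition~\ref{proposition shape of solution}), so your rank claim is plausible, but it requires its own argument. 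The paper's cut-and-reduce route sidesteps this: after cutting, every remaining cycle lies in $\RR^2$ where Mikhalkin's regularity applies verbatim, and the residual conditions are one-dimensional moment relations whose independence is elementary.
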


\begin{proof}
The proof is similar to the computation of the dimension in \cite{blomme2021floor}. Let $h:\Gamma\to\TT M_\delta$ be a simple tropical curve. Using the cutting procedure described in Section \ref{sec Cutting procedure} with an admissible set $\Q$, we get a parametrized tropical curve $\widehat{h}:\widehat{\Gamma}\to\RR^2$ with new marked 
points coming from $\Q$ on the non-vertical ends that get identified through the quotient map. A small deformation of $\Gamma$ is equivalent to a small deformation of $\widehat{\Gamma}$ where marked ends keep their identification under the quotient.

Let $e$ and $e'$ being two ends identified through the quotient map, $\mu_e$ and $\mu_{e'}$ be their moments. Let $\lambda_e\in\ZZ$ be the class in $\pi_1(\TT M_\delta)$ realized by the following loop: push down to $\TT M_\delta$ a path in $\widehat{\Gamma}$ between the two new marked points on the ends. The condition for ends to keep be identified throughout the deformation of $\widehat{\Gamma}$ can be written as $\mu_e+(-1)^{\lambda_e}\mu_{e'}=0.$

We have such a condition for each pair of ends. As there is at least one vertical end, the above impose $|\Q|$ linearly independent relations. By the computation of the dimension in the planar case from \cite{mikhalkin2005enumerative}, $\widehat{\Gamma}$ varies in a space of dimension $(|\mu|+2|\Q|)+(g-|\Q|)-1+n$. Substracting $|\Q|$ yields the expected dimension.
\end{proof}

\section{Tropical invariance and refined enumeration}

\subsection{Enumerative problem and geometry of the solutions}

\subsubsection{Enumerative problems.} The moduli space of curves on $\TT M_\delta$ has the evaluation map 
$$\mathrm{ev}:\M_{g,n}(\TT M_\delta,(a,b),\mu)\longrightarrow \TT M_\delta^n\times \TT E^{|\mu|},$$
evaluating the position of marked points and ends. For a suitable choice of $|\mu|$ and $n$, the dimension of the codomain of $\mathrm{ev}$ agrees with the dimension of the subspace of $\M_{g,n}(\TT M_\delta,(a,b),\mu)$ that parametrizes simple combinatorial types. Hence, we can pose the following enumerative problems:
\begin{itemize}[label=-]
\item How many genus $g$ curves in the class $aE+bF$ pass through $2b+g-1$ points in generic position?
\end{itemize}
Choosing partitions $\mu$ and $\nu$ such that $\|\mu+\nu\|=\sum i(\mu_i+\nu_i)=2b$, we can ask a relative version:
\begin{itemize}[label=-]
\item How many genus $g$ curves in the class $aE+bF$ with tangency profile $\mu+\nu$ pass through $|\nu|+g-1$ points and have $\mu_i$ ends of weight $i$ of fixed position?
\end{itemize}

In each case, the problem amounts to finding the preimages of a general point in the codomain of the evaluation map. 

\begin{prop}
For a generic choice of constraints satisfying the conditions above, there is a finite number of preimages of $\mathrm{ev}$ in  $\M_{g,n}(\TT M,(a,b),\mu)$, all of which are simple.
\end{prop}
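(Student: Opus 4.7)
The statement is standard tropical enumerative geometry, in the spirit of \cite{mikhalkin2005enumerative} in the planar case and its adaptation to cylinders in \cite{blomme2021floor}. The plan is to combine the polyhedral structure of $\M_{g,n}(\TT M_\delta,(a,b),\mu)$ with a case-by-case dimension count ruling out non-simple combinatorial types.

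First I would use that $\M_{g,n}(\TT M_\delta,(a,b),\mu)$ is a polyhedral complex with finitely many cells $\M_\alpha$ indexed by combinatorial types (an abstract graph $\Gamma$ with integer slopes on its edges compatible with the lattice monodromy and with the balancing condition). Finiteness follows because $g$ and $2b$ bound the number of vertices, the number of edges, and the magnitudes of the slopes. On each cell, the coordinates are the edge lengths and the image of a root vertex in $\TT M_\delta$, so $\mathrm{ev}$ is the restriction of an integer affine map. Then I would show that for every non-simple type $\alpha$ the image $\mathrm{ev}(\M_\alpha)$ lies in a proper affine subspace of $\TT M_\delta^n\times\TT E^{|\mu|}$. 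There are three sources of non-simplicity to handle:
\begin{itemize}[label=$\ast$]
\item \emph{A vertex of valence $k\geq 4$.} Unfolding $h$ into $\widehat{h}:\widehat{\Gamma}\to\RR^2$ via the cutting procedure of Section \ref{sec Cutting procedure} and applying the planar dimension count of \cite{mikhalkin2005enumerative}, such a vertex lowers $\dim\M_\alpha$ by at least $k-3$, giving $\dim\M_\alpha<|\mu|+g-1+n$.
\item \emph{A contracted edge (slope $0$).} Its length is a free parameter in $\M_\alpha$ on which $\mathrm{ev}$ is constant, producing a non-trivial kernel; the image dimension drops accordingly.
\item \emph{A flat vertex.} Translating the vertex along the common direction of its adjacent edges yields another parametrized tropical curve with the same image and evaluation, again furnishing a kernel of $\mathrm{ev}$.
\end{itemize}
In each case the argument is a dimension count on the planar cover $\widehat{\Gamma}$ together with the $|\mathcal{Q}|$ linearly independent gluing relations from Remark \ref{remark induced menelaus}, proceeding exactly as in the proof of Proposition \ref{prop description moduli space}.

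To conclude, on a simple cell $\mathrm{ev}|_{\M_\alpha}$ is an affine map between polyhedra of equal dimension, so a target value in the interior of its image has a finite fiber; if the linear part of $\mathrm{ev}|_{\M_\alpha}$ drops rank, the image is itself a proper affine subspace and generic targets avoid it. Since there are finitely many combinatorial types, the union of the images of non-simple cells and of the cell boundaries is a closed codimension-$\geq 1$ polyhedral subset of the codomain, which a generic constraint point avoids. The delicate step is the dimension count for non-simple types in the presence of disorienting cycles: one has to check that the non-orientable monodromy of $\TT M_\delta$ does not accidentally enlarge the deformation space beyond what is visible on the planar cover, which is precisely what the cutting reduction of Section \ref{sec Cutting procedure} ensures.
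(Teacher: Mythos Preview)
Your proposal is correct and follows essentially the same strategy as the paper: show that non-simple combinatorial types have evaluation image of strictly smaller dimension, so a generic constraint avoids them, and then use the equidimensionality on simple cells to get finiteness. The only cosmetic difference is in how non-immersions are dispatched: the paper merges parallel edges to replace a non-immersed curve by an immersed one of smaller genus, with fewer ends, or with a higher-valence vertex (all of which land in lower-dimensional strata), whereas you treat contracted edges and flat vertices directly by exhibiting a kernel of $\mathrm{ev}$. Both reductions are standard and lead to the same conclusion; your remark about disorienting cycles is not actually a new difficulty here, since the cutting procedure already underlies the dimension formula of Proposition~\ref{prop description moduli space} that you invoke.
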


\begin{proof}
Let $h:\Gamma\to\TT M_\delta$ be a tropical curve. Up to a reparametrization by merging parallel edges, we can assume that $h$ is an immersion. This operation replaces $\Gamma$ by a graph that is either of smaller genus, has fewer ends, or at least one non-trivalent vertex. This does not change the dimension of the image under the evaluation map. Thus, the evaluation map is not surjective for these combinatorial types. hence, a generic choice of constraints is out of their image, and only simple combinatorial types provide solutions.

For simple combinatorial types, as the codomain has the same dimension, the set of preimages is discrete. Indeed, if the derivative of $\text{ev}$ is not injective, it is not surjective either, and a general choice of constraints would not be in the image.

The other combinatorial types where $h$ is an immersion vary in a space of strictly smaller dimension, thus cannot contribute any solution, hence the result.
\end{proof}

\subsubsection{Position of the marked points on the solutions.} In the case of $\RR^2$ \cite{mikhalkin2005enumerative}, as well as the case of cylinders \cite{blomme2021floor}, we can characterize the position of marked points on a tropical curve solving the problem. Given a parametrized curve meeting the constraints above, the complement of marked points on it is without cycle  and each connected component contains a unique end.
This is due to the fact that both cycles and paths relating two distinct ends can deform in a dimension one space, which is prohibited by finiteness of the number of solutions. Thus, the complement of marked points is a forest of trees, each rooted at an end, whose leaves are marked points.

\smallskip

In the case of M\"obius strips, the description differs due to the non-orientability of the surface, as disorienting cycles cannot be deformed without moving at least one of the adjacent edges.

\begin{prop}\label{proposition shape of solution}
Let $h:\Gamma\to\TT M_\delta$ be a simple tropical curve of genus $g$ in the class $aE+bF$ passing through a generic configuration of $2b+g-1$ points $\P$. Then, each component of the complement of marked points $h(\Gamma)\setminus \P$ satisfies either of the following conditions:
\begin{enumerate}[label=$(\roman*)$]
\item It contains a unique end and no cycle, or
\item It contains a unique disorienting cycle and no end.
\end{enumerate}
\end{prop}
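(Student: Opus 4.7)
The plan is to perform a local dimension count at a generic solution $h$. Since $\P$ is generic and the enumerative problem is $0$-dimensional, the tangent space to the fiber of $\mathrm{ev}$ at $h$ vanishes. This tangent space decomposes as a direct sum $\bigoplus_K T_K$ over the connected components $K$ of $\Gamma\setminus h^{-1}(\P)$, where $T_K$ is the space of infinitesimal deformations of $h|_K$ that preserve all marked points incident to $K$. The strategy is to compute $\dim T_K$, deduce each summand must vanish, and then constrain the possible shapes of $K$.

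First I would compute $\dim T_K$ for a fixed component $K$ with $g_K = b_1(K)$ cycles, $e_K$ ends, and $b_K$ boundary half-edges meeting the marked points. Trivalence together with an Euler-characteristic count gives $V_K^{\mathrm{int}} = 2g_K - 2 + b_K + e_K$ internal vertices and $E_{b,K} = 3g_K - 3 + 2b_K + e_K$ bounded edges in $K$. A deformation is parametrized by the image of one reference vertex ($2$ parameters) together with the $E_{b,K}$ edge lengths, subject to $2g_K$ closure conditions (two per cycle) and $2b_K$ boundary constraints (two per marked point). Subtracting yields $\dim T_K = g_K + e_K - 1$.

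Next, a global combinatorial identity forces the sum of these dimensions to vanish. Writing $n_c$ (resp. $n_b$) for the number of marked points lying on a cycle-edge (resp. bridge-edge) of $\Gamma$, one has $\sum_K g_K = g - n_c$, $\sum_K e_K = 2b$, and the number of components equals $1 + n_b$, so that $\sum_K (g_K + e_K - 1) = g + 2b - 1 - n$. With $n = 2b + g - 1$ this sum vanishes. A component with $g_K + e_K = 0$ would be over-determined (no curve of that combinatorial type would pass through a generic configuration of $b_K$ marked points), while a component with $g_K + e_K \geq 2$ would yield a positive-dimensional family of solutions, contradicting finiteness of the fiber of $\mathrm{ev}$. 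Hence each $\dim T_K$ must be exactly $0$, forcing $g_K + e_K = 1$. The two possibilities $(g_K, e_K) = (0,1)$ and $(1,0)$ yield cases (i) and (ii).

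The main obstacle is ruling out an orienting cycle in case (ii), since the rank calculation above treats orienting and disorienting cycles identically. I would proceed by lifting $h$ through the oriented double cover $\pi: \TT C_\delta \to \TT M_\delta$ to a parametrized tropical curve $\tilde h : \tilde\Gamma \to \TT C_\delta$. If the unique cycle of $K$ were orienting, then the monodromy of $\pi$ over $K$ would be trivial, so the preimage $\tilde K \subset \tilde \Gamma$ would split as two disjoint copies of $K$, each containing a cycle and no end. These would form cycle-components of the complement of the lifted marked points in the orientable cylinder $\TT C_\delta$; but by the cylinder analysis of \cite{blomme2021floor}, such a component admits a one-parameter family of deformations, contradicting the rigidity of $\tilde h$. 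Hence the cycle of $K$ is disorienting, and its lift to $\TT C_\delta$ is a single connected cycle of double combinatorial length, for which the obstruction disappears.
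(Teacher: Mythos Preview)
Your overall strategy coincides with the paper's: both arguments reduce to the Euler-characteristic identity $\sum_K (g_K+e_K-1)=0$ (you phrase it as a sum of expected tangent dimensions, the paper as $\sum_i(1-g_i-x_i)=0$), and both then exclude the extremes $g_K+e_K=0$ and $g_K+e_K\geqslant 2$ to force $g_K+e_K=1$. The paper excludes the $(0,0)$ case by lifting the tree to $\RR^2$ and invoking the Menelaus relation among the leaf moments (with a separate treatment of the case where all leaves are paired), whereas you simply say ``over-determined''. Your version is a bit loose: the $b_K$ boundary half-edges need not sit at $b_K$ \emph{distinct} marked points, and when two leaves come from the same marked point the target configuration lies on a diagonal, so a naive genericity claim does not apply. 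One can rescue the conclusion more cleanly than either argument by observing that rigidity of $h$ forces every $\dim T_K=0$, hence $g_K+e_K-1\leqslant\dim T_K=0$, and then $\sum(g_K+e_K-1)=0$ with all summands $\leqslant 0$ gives all summands $=0$; the over-determined step is then unnecessary.

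The more serious gap is in your exclusion of orienting cycles. You lift to the double cover $\TT C_\delta$, note that $\tilde K$ splits as two copies, observe that each copy is a cycle-component with no end in a cylinder, and then say this ``contradicts the rigidity of $\tilde h$''. But $\tilde h$ passes through the \emph{symmetric} configuration $\tilde\P$, which is not generic in $\TT C_\delta$, so there is no reason for $\tilde h$ to be rigid; the cylinder result you cite requires genericity of the point configuration. What does hold is that the one-parameter deformation of a single copy $\tilde K_1$ (with its boundary fixed at points of $\tilde\P$) descends, via the isomorphism $\tilde K_1\simeq K$ given by the covering projection, to a one-parameter deformation of $K$ in $\TT M_\delta$ with fixed boundary. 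That in turn gives a nontrivial deformation of $h$ through curves meeting $\P$, contradicting the rigidity of $h$ (which \emph{is} known since $\P$ is generic). The paper makes this step directly in $\TT M_\delta$, simply asserting that an orienting cycle can always be deformed; your double-cover route is a valid way to see this, but the contradiction must be drawn with $h$, not with $\tilde h$.
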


\begin{proof}
We obtain a space $\Gamma_\P$ by disconnecting the curve at each marked point, replacing the edge it lies on by two closed half-edges. The connected components of $\Gamma_\P$ correspond exactly to the connected components of $\Gamma\setminus h^{-1}(\P)$. As there are  $2b+g-1$ points in $\P$, and the Euler characteristic of $\Gamma$ is $1-g-2b$, we have
$$\chi(\Gamma_\P)=  \chi(\Gamma)+ 2b+g-1 = 1-g-2b +2b+g-1 = 0.$$
Let $\Gamma_i$ be a connected component of $\Gamma_\P$. The Euler characteristic of $\Gamma_i$ is $1-g_i-x_i$, where $g_i$ is the first Betti number of the component, and $x_i$ the number of ends that it contains.

If we have $g_i=x_i=0$, the component is a tree  with a point of $\P$ at each of its outgoing edges since it contains no end. Here, the two leaves incident to the same marked point can belong to the same component. As the genus is $0$, we can lift the component to $\RR^2$. Then, we have the tropical Menelaus condition from Remark \ref{remark induced menelaus} between the position of ends, yielding a relation between the position of the marked points. Notice that if the two half-edges coming from the same edge appear as leafs of the component, their contributions to the relation cancel. First, assume the relation is nonempty. Then, this corresponds to a non-generic choice of the constraints, contradicting genericity.

Now, assume the relation is empty. Then, all boundary points come in pairs, i.e., for a leaf corresponding to a marked point, the other leaf associated to the same marked point is also adjacent to the component. Therefore, the component corresponds to a curve without ends. It is thus a genus $1$ curve that varies in a $1$-dimensional space containing a unique point condition. This closes the proof if $\Gamma$ is connected.

Thus, we can assume that we always have $1-g_i-x_i\leqslant 0$. As the sum is $0$, each summand is $0$ and we have two possibilities: either $g_i=0$ and $x_i=1$; or $g_i=1$ and $x_i=0$.

The first case corresponds to components without cycles and with a unique end. For the second case, as it is always possible to deform an orienting cycle, the unique cycle has to be disorienting. This concludes the proof.
\end{proof}

\begin{rem}
Proposition \ref{proposition shape of solution} essentially states that disorienting cycles behave like ends as they can be deformed to comply with the deformation of an adjacent edge.
\end{rem}
\begin{figure}
\begin{center}
\begin{tabular}{cc}\begin{tikzpicture}[line cap=round,line join=round,>=triangle 45,x=0.4cm,y=0.4cm,scale = 0.85]
\clip(-1,-1) rectangle (11,12);
\draw [line width=1pt] (0,0)-- (0,15) node[midway,sloped] {$>>$};
\draw [line width=1pt] (10,0)-- (10,15) node[midway,sloped] {$>>$};
\draw [line width=1pt] (0,0) node {$\bullet$} -- (5,0) node[midway,sloped] {$>$} node {$\bullet$} -- (10,0) node[midway,sloped] {$>$} node {$\bullet$};

\draw [line width=2pt,red] (1,0)--++(2,2)--++(1,0)--++(2,-2);
\draw [line width=2pt,red] (3,2)--(3,15);
\draw [line width=2pt,red] (4,2)--(4,15);

\draw [line width=2pt,blue] (1.7,0)--++(1.5,1.5)--++(1.8,0)--++(1.5,-1.5);
\draw [line width=2pt,blue] (3.2,1.5)--(3.2,15);
\draw [line width=2pt,blue] (5,1.5)--(5,15);
\end{tikzpicture} &
\begin{tikzpicture}[line cap=round,line join=round,>=triangle 45,x=0.4cm,y=0.4cm,scale = 0.85]
\clip(-1,-1) rectangle (11,12);
\draw [line width=1pt] (0,0)-- (0,15) node[midway,sloped] {$>>$};
\draw [line width=1pt] (10,5)-- (10,20) node[rotate = 90] at (10,7.5) {$>>$};
\draw [line width=1pt] (0,0) node {$\bullet$} -- (5,0) node[midway,sloped] {$>$} node {$\bullet$} -- (10,5) node[midway,sloped] {$>$} node {$\bullet$};

\draw [line width=2pt, blue] (2,0)--++(2,2)--++(3,0);
\draw [line width=2pt, blue] (4,2)--(4,15);

\draw [line width=2pt,red] (3,0)--++(3,3)--++(2,0);
\draw [line width=2pt,red] (6,3)--(6,15);
\end{tikzpicture} \\
$(a)$ & $(b)$ \\
\end{tabular}
\caption{\label{fig-deformation-disorienting-cycle} A disorienting cycle with a deformation moving only one adjacent edge. $(a)$ is on $\TT M_0$ and $(b)$ is on $\TT M_1$.}
\end{center}
\end{figure}
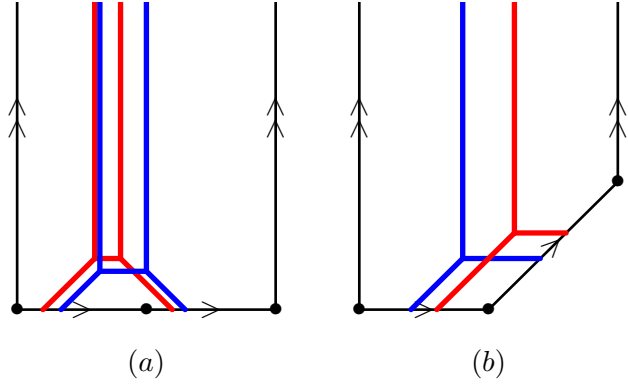
\begin{expl}
In Figure \ref{fig-deformation-disorienting-cycle}, we can see two tropical curves with a unique disorienting cycle. Contrarily to orienting cycles, we see that it is not possible to deform the cycle while fixing the adjacent edges, but it is possible to deform the cycle while varying only one of the adjacent edges: the right vertical edge for $(a)$ (over $\TT M_0$) and the only vertical edge for $(b)$ (over $\TT M_1$).
\end{expl}

\subsection{Multiplicities}
In  tropical enumerative geometry, we usually count parametrized tropical curves with multiplicity. The case of curves on tropical M\"obius strips is no different. Let $h:\Gamma\to\TT M_\delta$ be a parametrized tropical curve of genus $g$, tangency profile $\mu+\nu$ (where $\mu$ and $\nu$ are partitions) with $n=|\nu|+g-1$ generic fixed marked points $\P$, where the ends indexed by $\mu$ are fixed. By genericity, $h$ is a simple tropical curve. For an edge or end $e$, let $u_e$ be its slope, and $u_e=w_e\widehat{u_e}$, where $\widehat{u_e}$ is the primitive lattice vector in direction $u_e$. Using Proposition \ref{proposition shape of solution}, we  choose an orientation of the edges of $\Gamma$ such the edges and ends inside a component of $h(\Gamma)\setminus \P$ point toward the disorienting cycle or the free end in the component. For the edges of disorienting cycles, we choose an arbitrary orientation of the cycle.

Each M\"obius strip is endowed with a lattice structure as given in Proposition \ref{prop: definition moebius strip}. Thus, given a vertex $V$ of $\Gamma$, we obtain a rank $2$ lattice $N_V$ at the point $h(V)\in \TT M$. Moreover, given a bounded edge $e$ in $\Gamma$, the lattice structure can be trivialized on the interior of the edge, so that we can define a rank $2$ lattice $N_e$ containing the slope $u_e$. Moreover, for each flag $V\in e$ of $\Gamma$, we have a map $N_V\to N_e$ identifying both lattices. We now have the following lattice map
$$\begin{array}{r*2{>{\displaystyle}c}*2{>{\displaystyle}l}}
\Theta: & \bigoplus_{V\in V(\Gamma)} N_V  & \longrightarrow & \bigoplus_{e\in E_b(\Gamma)}N_e/\langle \widehat{u_e}\rangle\oplus\bigoplus_1^n N_{V_i} \oplus \bigoplus_\mu N_e/\langle \widehat{u_e}\rangle \\
 & (\phi_V) & \longmapsto & \left( (\phi_{\hfk(e)}-\phi_{\tfk(e)}),(\phi_{V_i}),(\phi_{\hfk(e)})\right)
\end{array},$$
where $\hfk(e)$ (resp. $\tfk(e)$ if $e$ is bounded) is the head (resp. tail) of an edge $e$, and $V_i$ is the vertex corresponding to the $i$-th marked point. The domain is indexed by the vertices of the tropical curve (including the marked points), and the codomain is indexed by the bounded edges of the curve along with the marked points and ends whose position is evaluated. The curve is simple, hence trivalent. Counting the number of pairs $(V\in e)$ in two ways and computing the Euler characteristic, we have two equations:
$$3|V(\Gamma)|=2|E_b(\Gamma)|+n+|\mu|+|\nu|\text{ and }|V(\Gamma)|-|E_b(\Gamma)|=1-g.$$
Thus, $\Gamma$ has $| V(\Gamma)|=|\mu|+|\nu|+2g-2+n$ vertices, and $| E_b(\Gamma)|=3g-3+|\mu|+|\nu|+n$ bounded edges. As $n=|\nu|+g-1$, both ranks are equal to $2|\mu|+4|\nu|+6g-6$. Thus, we can speak about the lattice index of the image of $\Theta$. Taking bases of the lattices, we can see $\Theta$ as an integer matrix whose lattice index is equal to $|\det\Theta|$.
\begin{defi}
We define the multiplicity of a tropical curve to be
$$m_\Gamma^\CC=|\det\Theta|\prod_{e\in E_b(\Gamma)}w_e.$$
\end{defi}
The following proposition gives a concrete expression for the multiplicity.
\begin{prop}\label{prop-computation-mult}
Let $h:\Gamma\to\TT M_\delta$ be a simple tropical curve of genus $g$ in the class $aE+bF$ of tangency profile $\mu+\nu\vdash 2b$ passing through a generic configuration of $|\nu|+g-1$ points $\P$ with the position of the ends indexed by $\mu$ fixed. Let $k$ be the number of disorienting cycles in the complement of the marked points. Then, the multiplicity of $\Gamma$ is given by:
$$m_\Gamma^\CC=\frac{2^k}{I^\mu}\prod_V m_V,$$
where $m_V$ is the usual vertex multiplicity, defined as $m_V=|\det(a_V,b_V)|$ if $a_V$ and $b_V$ are two out of the three outgoing slopes, and $I^\mu=\prod_i i^{\mu_i}$.
\end{prop}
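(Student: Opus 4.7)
The plan is to evaluate $|\det\Theta|$ by eliminating variables along the combinatorial structure of $\Gamma$ provided by Proposition \ref{proposition shape of solution}. First I decompose $\Gamma\setminus h^{-1}(\P)$ into its connected components, each of which is either a tree containing a single end (type (i)) or a subgraph containing a single disorienting cycle and no end (type (ii)). Each marked point $V_i$ supplies a full rank-$2$ block in the codomain that pins $\phi_{V_i}$ at unit cost; once these blocks are eliminated, the remaining matrix is block-diagonal along the components of $\Gamma\setminus h^{-1}(\P)$, giving $|\det\Theta|=\prod_C|\det\Theta_C|$.

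For a type (i) component $C$ with terminal end $e_*$, I orient the edges of $C$ toward $e_*$ and eliminate the trivalent vertices in topological order from the pinned marked-point boundary inward. At each trivalent vertex $V$ whose two incoming edges $e_1,e_2$ have already been eliminated, the equations in $N_{e_i}/\langle\widehat{u}_{e_i}\rangle$ determine $\phi_V$ with lattice index $|\det(\widehat{u}_{e_1},\widehat{u}_{e_2})|$; coupling this with the weight factors $w_{e_1}w_{e_2}$ drawn from the prefactor $\prod_{e\in E_b(\Gamma)}w_e$ reproduces exactly $m_V=w_{e_1}w_{e_2}|\det(\widehat{u}_{e_1},\widehat{u}_{e_2})|$. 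Iterating over all trivalent vertices of $C$ yields $\prod_{V\in C}m_V$. At the terminal end $e_*$: if $e_*\in\mu$, its fixed-position row in the codomain consumes one additional rank-$1$ constraint at the cost of a residual $1/w_{e_*}$; if $e_*\in\nu$, no extra row is present and the last weight cancels. Aggregating the residuals from $\mu$-ends across all type (i) components produces $\prod_i i^{-\mu_i}=1/I^\mu$.

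For a type (ii) component $C$, the same inward elimination reduces the problem to closing the unique disorienting cycle. Lifting a base vertex of the cycle to the universal cover $\RR^2$, the second lift differs by the monodromy $\sigma$, whose matrix is $\left(\begin{smallmatrix}1 & 0 \\ 0 & -1\end{smallmatrix}\right)$ on $\TT M_0$ and $\left(\begin{smallmatrix}1 & 0 \\ 1 & -1\end{smallmatrix}\right)$ on $\TT M_1$; both act as $-1$ on the vertical (non-fixed) direction. The closing equation $(\mathrm{id}-\sigma)\phi_{V_0}=\sum t_i u_{e_i}$ splits along the fixed and non-fixed eigendirections of $\sigma$: the fixed direction is absorbed by the cycle edge-length parameters and contributes the standard $m_V$ at each vertex of the cycle, while the non-fixed direction yields the $1\times1$ Jacobian $|1-(-1)|=2$. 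Summing over the $k$ type (ii) components produces the factor $2^k$.

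The main obstacle will be the careful lattice-theoretic bookkeeping: one must verify that at each elimination step the bounded-edge weights $w_e$, the quotient denominators of $N_e/\langle\widehat{u}_e\rangle$, and the orientation-dependent signs implicit in the definition of $\Theta$ combine to reproduce $m_V$ with no stray integer indices, and that the monodromy-closing argument on a disorienting cycle yields the factor $2$ independently of the choice of base vertex and cycle trivialization. A secondary subtlety is that a type (ii) component can have marked points both on the cycle itself and on trees attached to it; the elimination must be sequenced so that the tree contributions and the cycle contribution combine cleanly into the claimed product.
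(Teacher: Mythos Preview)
Your overall strategy coincides with the paper's: first use the identity blocks at the marked points to reduce to components of $\Gamma\setminus h^{-1}(\P)$, then prune the trees from the leaves inward to factor out $\prod_V m_V$, and finally treat each residual disorienting cycle separately to extract the factor~$2$. The treatment of the $\mu$-ends versus $\nu$-ends and the resulting $1/I^\mu$ is also essentially the same as the paper's (the paper absorbs all weights into the slopes at once and then observes that only the $\mu$-ends fail to cancel against the prefactor $\prod_e w_e$).

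Where you diverge from the paper is in the cycle step, and here your writeup needs care. The map $\Theta$ as defined has domain $\bigoplus_V N_V$; there are no edge-length variables $t_i$. Your ``closing equation'' $(\mathrm{id}-\sigma)\phi_{V_0}=\sum t_i u_{e_i}$ therefore does not literally live in $\Theta$, and the claim that ``the fixed direction is absorbed by the cycle edge-length parameters and contributes the standard $m_V$ at each vertex'' conflates two separate mechanisms. In the paper's computation, after pruning one is left with vertices $V_1,\dots,V_p$ on the cycle, each carrying one attached fixed-end row $\det(v_i,-)$ and the cycle-edge rows $\det(u_i,-)$. The vertex multiplicities $m_{V_i}$ come entirely from expanding along the fixed-end rows (choosing bases $(v_i,v'_i)$ so that each such row becomes $(0\ 1)$); only \emph{after} this expansion does one obtain the reduced $p\times p$ matrix of cycle-edge constraints, whose determinant is $2$ because the disorienting monodromy forces two entries of the same sign in the last column. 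Your eigenspace argument for $\sigma$ captures the reason this determinant is $2$ rather than $0$, but you should separate it cleanly from the source of the $m_{V_i}$, which is the fixed-end rows and not any edge-length absorption.
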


\begin{proof}
The proof consists in computing the determinant of the lattice map $\Theta$. To write the matrix of $\Theta$, we choose a basis of each $N_V$. For each $e$, choosing an identification with $\ZZ^2$, the linear form $\det(\widehat{u_e},-)$ provides a coordinate (i.e. a bijection to $\ZZ$) of the quotient lattice $N_e/\langle \widehat{u_e}\rangle$. The matrix of $\Theta$ is constructed as follows.
\begin{itemize}[label=-]
\item For fixed ends, we have a $1\times 2$ block comprised of $\det(\widehat{u_e},-)$ evaluating the position of the unique adjacent vertex in the quotient.
\item For bounded edges, we have two of these blocks, one for each adjacent vertex. Substituting the primitive slope $\widehat{u_e}$ by the actual slope $u_e=w_e\widehat{u_e}$ divides the determinant by $w_e$. This cancels with the $w_e$ in the definition of $m_\Gamma^\CC$. Thus, we can assume that the slopes are the true slopes $u_e$, getting a map $\Theta'$ with $m_\Gamma^\CC=\frac{1}{I^\mu}|\det\Theta'|$ where we divide by the product of weights of marked ends since they do not appear in the original product.
\item Evaluating at each of the $|\nu|+g-1$ marked points, contributes $\left(\begin{smallmatrix} 1 & 0 \\ 0 & 1 \\ \end{smallmatrix}\right)$. Applying Laplace expansion for the determinant with respect to these deletes one of the $\det(u_e,-)$ in each row corresponding to an adjacent bounded edge. We are left with the matrix of the lattice map $\Theta$ for the tropical curve where each point has been removed, and the bounded edge containing it is replaced by a pair of ends whose positions we evaluate.
\end{itemize}
Thus, we may assume that there are no marked points. The rest of the computation is done recursively in two steps. The first step is needed to prove that the multiplicity in the sense of Nishinou-Siebert as in \cite{nishinou2006toric} coincides with the one defined by Mikhalkin in \cite{mikhalkin2005enumerative}. We briefly recall it for sake of completeness. The second step is specific to our case, dealing with disorienting cycles. Notice that the matrix of $\Theta$ splits into blocks for the different connected components of $\Gamma$, of which there might be several after the last step above.

We prune the branches of the complement of marked points. If $V$ is a vertex adjacent to two fixed ends with slopes $u_1$ and $u_2$, the column of $V$ only has the following blocks:
$$\begin{array}{|c|}
\hline
\det(u_1,-) \\
\hline
\det(u_2,-) \\
\hline
\det(u_e,-) \\
\hline
\end{array}.$$
The first two rows correspond to the evaluation of the position of the adjacent ends, and the last row to the position of the remaining adjacent edge $e$. This row does not appear if $e$ is an end. As the first two rows are the only non-zero elements, we expand $m_V=\det(u_1,u_2)\cdot|\det\Theta_{\Gamma\setminus V}|$ where $\Theta_{\Gamma\setminus V}$ is the lattice of the curve $h|_{\Gamma\setminus V}$, where $e$ disappears if it was unbounded or becomes an end whose position is evaluated. 

 If there are no cycles in the complement of the marked points, the previous step is enough to fully compute the determinant. Else, by Proposition \ref{proposition shape of solution} $\Gamma$ has a unique disorienting cycle, all of whose adjacent edges are ends whose position is evaluated. Let $V_1,\dots,V_p$ be the vertices on the cycle, $u_1,\dots,u_p$ be the slopes of the edges of the cycle, and $v_1,\dots,v_p$ the slope of the adjacent ends, such that $V_i$ is adjacent to the edges of slopes $u_i,v_i$ and $u_{i+1}$, indices taken modulo $p$. We have $u_i+v_i=u_{i+1}$. The matrix has the following form:
\begin{center}
\begin{tabular}{|ccccc|}
\hline
\multicolumn{1}{|c|}{$\det(u_1,-)$} &                         &                                  & \multicolumn{1}{c|}{}     & $\det(u_1,-)$ \\ \cline{2-2} \cline{5-5} 
\multicolumn{1}{|c|}{$-\det(u_2,-)$} & \multicolumn{1}{c|}{$\det(u_2,-)$} &       &                            &                               \\ \cline{1-1}
\multicolumn{1}{|c|}{}   & \multicolumn{1}{c|}{$-\det(u_3,-)$} & $\ddots$                      &                           &    \\ \cline{2-2} \cline{4-4}
                         &                          & \multicolumn{1}{c|}{$\ddots$} & \multicolumn{1}{c|}{$\det(u_{p-1},-)$} &    \\ \cline{5-5} 
                         &                               & \multicolumn{1}{c|}{}      & \multicolumn{1}{c|}{$-\det(u_p,-)$}   & $\det(u_p,-)$ \\ \hline
\multicolumn{1}{|c|}{$\det(v_1,-)$} &                               &                            &                           &    \\ \cline{1-2}
\multicolumn{1}{|c|}{}   & \multicolumn{1}{c|}{$\det(v_2,-)$}  & $\ddots$                      &                           &    \\ \cline{2-2} \cline{4-4}
                         &                         & \multicolumn{1}{c|}{$\ddots$} & \multicolumn{1}{c|}{$\det(v_{p-1},-)$} &    \\ \cline{4-5} 
                         &                             &                            & \multicolumn{1}{c|}{}     & $\det(v_p,-)$ \\ \hline
\end{tabular}
\end{center}
The columns correspond to the vertices $V_1,\dots,V_p$, the top rows to the edges of the disorienting cycle, and the bottom rows to the evaluation of the fixed ends. As the cycle is disorienting, it is not possible to trivialize the lattice along the cycle. This is responsible for the two positive entries in the last column, leading to a non-zero determinant. For the copy of $\ZZ^2$ corresponding to $V_i$, we can take a basis of the form $(v_i,v'_i)$ such that the block $\det(v_i,-)$ becomes $(0\ 1)$. We then expand with respect to the bottom rows:
\begin{center}
\begin{tabular}{|ccccc|}
\hline
\multicolumn{1}{|c|}{$\det(u_1,v_1)$} &                         &                                  & \multicolumn{1}{c|}{}     & $\det(u_1,v_p)$ \\ \cline{2-2} \cline{5-5} 
\multicolumn{1}{|c|}{$-\det(u_2,v_1)$} & \multicolumn{1}{c|}{$\det(u_2,v_2)$} &                                   &                           &    \\ \cline{1-1}
\multicolumn{1}{|c|}{}   & \multicolumn{1}{c|}{$-\det(u_3,v_2)$} & $\ddots$                       &                           &    \\ \cline{2-2} \cline{4-4}
                         &                         & \multicolumn{1}{c|}{$\ddots$} & \multicolumn{1}{c|}{$\det(u_{p-1},v_{p-1})$} &    \\ \cline{5-5} 
                         &                             & \multicolumn{1}{c|}{}      & \multicolumn{1}{c|}{$-\det(u_p,v_{p-1})$}   & $\det(u_p,v_p)$ \\ \hline
\end{tabular}
\end{center}
For each column, as $m_{V_i}=\det(u_i,v_i)=\det(u_{i+1},v_i)$, we can factor the vertex multiplicity $m_{V_i}$. Finally, we are left with computing the following determinant:
$$\begin{vmatrix}
1 & & &  & 1 \\
-1 & 1 &  & &  \\
 & -1 & \ddots  & &  \\
 & & \ddots & 1 &  \\
 & & &  -1 & 1 \\
\end{vmatrix} = 2.$$\qedhere
\end{proof}

\begin{rem}
Note that the multiplicity of the curve depends on the position of the marked points and not just on its combinatorial type. This is similar to the multiplicity of a tropical curve in a linear system on an abelian surface introduced in \cite{blomme2022abelian2}.
\end{rem}

We can thus consider the count
$$N^\delta_{g,aE+bF}(\P)=\sum_{\Gamma\supset\P}m_\Gamma^\CC,$$
of tropical curves of genus $g$ in the class $aE+bF$ in the M\"obius strip $\TT M_\delta$ passing through $\P$. Alternatively, we can consider the relative counts
$$N^\delta_{g,aE+bF}(\mu,\nu)(\P).$$

We can define refined multiplicities (see \cite{block2016refined}) for curves on $\TT M_\delta$:

\begin{defi}
Let $h:\Gamma\to\TT M_\delta$ be a parametrized tropical curve of genus $g$ in the class $aE+bF$, tangency profile $\mu+\nu\vdash 2b$ meeting the constraints. We set
$$m_\Gamma^q=\frac{1}{I_q^\mu}2^k\prod_V [m_V]_q,$$
where $k$ is the number of disorienting cycles in the complement of the marked points,  $[a]_q=\frac{q^{a/2}-q^{-a/2}}{q^{1/2}-q^{-1/2}}$ denotes the $q$-analog, and $I_q^\mu=\prod [i]_q^{\mu_i}$.
\end{defi}

We then consider the refined counts
$$BG^\delta_{g,aE+bF}(\P)=\sum_{\Gamma\supset\P}m_\Gamma^q,$$
along with the relative refined count $BG^\delta_{g,aE+bF}(\mu,\nu)(\P)$.

\subsection{Invariance statement}

The invariance for the classical count is a consequence of the correspondence statement. As refined multiplicities specialize on classical multiplicities when $q$ goes to $1$, it is also a consequence of the invariance statement for the refined counts we show below.

\begin{theo}\label{theorem-refined-invariance}
The counts $BG^\delta_{g,aE+bF}(\P)$ and $BG^\delta_{g,aE+bF}(\mu,\nu)(\P)$ do not depend on the choice of the constraints as long as they are generic.
\end{theo}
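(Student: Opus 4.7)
The plan is to apply the standard wall-and-chamber argument for tropical invariance to the space of constraint configurations $\TT M_\delta^n \times \TT E^{|\mu|}$. On the open dense subset of generic $\P$, Proposition \ref{prop description moduli space} ensures that the combinatorial types of the finite set of solutions are locally stable under small perturbations, so both $BG^\delta_{g,aE+bF}(\P)$ and its relative version $BG^\delta_{g,aE+bF}(\mu,\nu)(\P)$ are locally constant. To conclude global invariance, I would join two generic configurations $\P_0$ and $\P_1$ by a generic path $\P_t$ and verify that the refined sum is preserved at each codimension-one wall crossing.

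The first step is to classify the walls. Refining the dimension count of Proposition \ref{prop description moduli space}, the codimension-one walls parametrise simple tropical curves with exactly one of the following degeneracies: (a) a unique four-valent vertex whose four adjacent slopes sum to zero, or (b) a configuration in which the combinatorial type transitions between type (i) and type (ii) components in the sense of Proposition \ref{proposition shape of solution}, i.e.\ where a disorienting cycle is created or destroyed as $\P_t$ crosses the wall.

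For walls of type (a), the analysis is classical. Resolving the four-valent vertex into a pair of trivalent ones by a short internal edge yields up to three distinct pairings of the outgoing slopes, some appearing on one side of the wall and the rest on the other. The Block--G\"ottsche identity, used as in \cite{itenberg2013block} to prove refined invariance in the toric setting, guarantees that the sums of $m_\Gamma^q$ over the two sides agree. Since the number $k$ of disorienting cycles is determined by the topological types of the components that do not contain the degeneration and is unchanged by the three resolutions, the factor $2^k$ can be pulled out of the identity.

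Walls of type (b) are new to the M\"obius setting and constitute the main obstacle: here $k$ jumps by $\pm 1$. The cleanest strategy is to pass to the double cover $\TT C_\delta \to \TT M_\delta$. A tropical curve $\Gamma$ in $\TT M_\delta$ with $k$ disorienting cycles in the complement of the marked points lifts either to a single connected curve on the cylinder or to a pair of curves exchanged by the deck transformation, and a direct computation of the lattice map $\Theta$ from Proposition \ref{prop-computation-mult} on the lift shows that the factor $2^k$ in $m_\Gamma^q$ is exactly the discrepancy between the M\"obius and cylindrical counting conventions. Combined with the fact that a wall of type (b) becomes a standard wall of type (a) upstairs, this reduces the invariance statement to the one proven in \cite{blomme2021floor} for cylinders. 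The hardest part is making this lifting argument precise: one must enumerate the local shapes of wall curves involving disorienting cycles, verify the behaviour of the monodromy matrices from Proposition \ref{prop: definition moebius strip} on their slopes under lifting, and check that the $2^k$ factor emerges naturally from the comparison. The relative count is handled by the same argument after including the fixed ends in the evaluation map.
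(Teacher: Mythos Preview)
Your wall classification is incomplete and, more seriously, your proposed mechanism for handling the change in $k$ does not match what actually happens at the relevant walls.

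First, the paper (following \cite{itenberg2013block}) identifies \emph{three} local degenerations: a cycle with a marked point contracts, a quadrivalent vertex appears, or a marked point merges with a trivalent vertex. You have omitted the first entirely, and your ``type (b)'' is not a separate species of wall but rather a subcase of the third: the number $k$ of disorienting cycles in the complement of marked points can only change when a marked point $P$ slides through a vertex $V$, because $k$ depends on which edge $P$ sits on. The paper's proof pins down exactly when this happens: if two of the three components behind $V$ coincide and close up into a disorienting cycle through $V$, then the combinatorial type with $P$ on the third edge has one more disorienting cycle than the two types with $P$ on the cycle edges, and the local identity is simply $2^{k-1}\prod[m_V]_q + 2^{k-1}\prod[m_V]_q = 2^k\prod[m_V]_q$. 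This is a direct, local check requiring no refined Pl\"ucker-type identity and no passage to a cover.

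Second, your double-cover strategy is not just hard to make precise---it is the wrong reduction. Lifting a curve with disorienting cycles to $\TT C_\delta$ changes the genus, the number of ends, and the number of constraints in a way that depends on $k$ itself, so there is no single enumerative problem upstairs to which the cylinder invariance of \cite{blomme2021floor} applies uniformly on both sides of the wall. In particular your claim that a type~(b) wall ``becomes a standard wall of type (a) upstairs'' is unsupported: a marked point meeting a trivalent vertex lifts to a marked point meeting a trivalent vertex, not to a quadrivalent vertex. The paper instead uses the cutting procedure to reduce \emph{locally} to $\RR^2$ (not globally to $\TT C_\delta$), which allows the standard planar wall analysis to be imported verbatim for the first two wall types and isolates the genuinely new content in the marked-point-meets-vertex case, handled by the elementary case split on $|\{\AAA,\BBB,\CCC\}|$ described above.
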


\begin{proof}
We proceed as in the proofs of the invariant counts provided in \cite{blomme2021floor,blomme2022abelian1,blomme2022abelian2,itenberg2013block} and reduce to the planar case in \cite{itenberg2013block}. Let $(\P_t)_{t\in [0;1]}$ be a generic path between two generic choices of constraints $\P_0$ and $\P_1$ in $\TT M_\delta$. We assume that the constraints move one at a time: either a unique point or the position of a fixed end moves. Let $h:\Gamma\to\TT M_\delta$ be a simple solution. By Proposition \ref{proposition shape of solution}, when removing all marked points and fixed ends from $\Gamma$ except the moving constraint, we are in one of the following situations:
	\begin{itemize}[label=$\ast$,noitemsep]
	\item we have a unique orienting cycle,
	\item we have a path linking two unfixed ends,
	\item we have a component with two disorienting cycles,
	\item we have a component with a disorienting cycle and an unfixed end.
	\end{itemize}

When slightly moving the constraint, the solutions slightly deform accordingly. As long as no edge length goes to $0$, both combinatorial type and multiplicity of the curves remain the same, hence we have local invariance. It remains to check what happens when an edge length goes to $0$, i.e. $\P_t$ becomes non-generic for some value of $t$. We call this \emph{crossing a wall}. 

\smallskip

Assume $\P_{t_\ast}$ is non-generic. Hence, there is a solution with at least a quadrivalent vertex, obtained by deformation of a simple solution. Moreover, for every $t\in ]t_\ast-\varepsilon;t_\ast+\varepsilon[$ with $t\neq t_\ast$, $\P_t$ is generic again and the solutions are simple. Let $h:\Gamma\to\TT M_\delta$ be a simple solution near the wall. To reduce to the planar setting, we use the cutting procedure from Remark \ref{remark induced menelaus}. As there is a minimal length for non-contractible cycles, we can choose an admissible set $\Q\subset\Gamma$ such that none of the cut edges has a length that vanishes under deformation. Thus, we are left with deformation of tropical curves inside $\RR^2$. The choice of $\Q$ ensures that for $t\in]t_\ast-\varepsilon;t_\ast+\varepsilon[$, $\Q$ deforms and keeps being admissible through the deformation. According to \cite{itenberg2013block}, the following can occur during the deformation:
\begin{itemize}[label=-,noitemsep]
\item a rectangular shaped cycle with a marked point gets contracted to a pair of quadrivalent vertices linked by a pair of parallel edges,
\item a quadrivalent vertex appears,
\item a marked point merges with a vertex.
\end{itemize}

The first two cases are treated as in \cite{itenberg2013block}. In the first case there is one curve on each side of the wall with the same multiplicity, as the marked point just jumps from one side of the cycle to the other. This is depicted in Figure \ref{fig-walls-as-usual}(a). For quadrivalent vertices, there are three adjacent combinatorial types, and the invariance is already proved in \cite{blomme2021floor,itenberg2013block}, and depicted in Figure \ref{fig-walls-as-usual}(b)(b').

\begin{figure}[h]
\begin{center}
\begin{tabular}{ccc}
\begin{tikzpicture}[x=0.5cm, y=0.5cm,scale = 0.9]
\draw[line width=1pt] (-2,-2)--++(2,2)--++(3,0)--++(2,-2);
\draw[line width=1pt] (-2,6)--++(2,-2)--++(3,0)--++(2,2);
\draw[line width=1pt] (0,0)--++(0,4);
\draw[line width=1pt] (3,0)--++(0,4);

\draw[line width=1pt,dotted] (0,0)--++(1,1)--++(1,0)--++(1,-1);
\draw[line width=1pt,dotted] (0,4)--++(1,-1)--++(1,0)--++(1,1);
\draw[line width=1pt,dotted] (1,1)--++(0,2);
\draw[line width=1pt,dotted] (2,1)--++(0,2);

\draw[line width=0.5pt,red,->] (3.5,1.5)--++(-3,0);
\draw[dashed,purple] (1.5,-2)--++(0,8);
\draw[red] (1,2) node {$\times$};
\draw[red] (2,2) node {$\times$};
\draw[red] (3,2) node {$\times$};
\end{tikzpicture} & \begin{tikzpicture}[x=0.5cm, y=0.5cm,scale = 0.9]]
\draw[line width=2pt] (-3,0)--(0,0)--(0,-3) (0,0)--(1,1)--(5,-1) (1,1)--(-1,5);
\draw[line width=1.5pt,blue] (-3,0)--(3,0)--(5,-1) (0,-3)--(0,3)--(-1,5) (0,3)--(3,0);

\draw[red,->] (-1,4)--(-2,3);
\draw[dashed,purple] (-2.5,5)--(1.5,-3);
\draw[line width=0.5pt] (-3,0)--(5,0);
\draw[line width=0.5pt] (0,-3)--(0,5);
\draw[line width=0.5pt,red] (-1,5)--(3,-3);
\draw[line width=0.5pt,red,dashed] (-3,4)--(0.5,-3);
\end{tikzpicture} & \begin{tikzpicture}[x=0.5cm, y=0.5cm,scale = 0.9]]
\draw[line width=2pt] (-3,0)--(-1,0)--(0,-1)--(0,-3) (0,-1)--(4,-2) (-1,0)--(-3,4);

\draw[red,->] (-1,4)--(-2,3);
\draw[dashed,purple] (-2.5,5)--(1.5,-3);
\draw[line width=0.5pt] (-3,0)--(5,0);
\draw[line width=0.5pt] (0,-3)--(0,5);
\draw[line width=0.5pt,red,dashed] (-1,5)--(3,-3);
\draw[line width=0.5pt,red] (-3,4)--(0.5,-3);
\end{tikzpicture} \\
(a) & (b) & (b') \\
\end{tabular}
\end{center}
\caption{\label{fig-walls-as-usual}In (a), a cycle gets contracted by moving the marked point. Deforming further, the cycle opens again and the marked point has changed side. In (b) and (b'), the movement of the red line forces the solution to pass through a quadrivalent vertex. If the red line passes into the upper right quadrant, we have two tropical solutions depicted in (b) in black and blue. If not, we have one tropical solution depicted in (b').}
\end{figure}
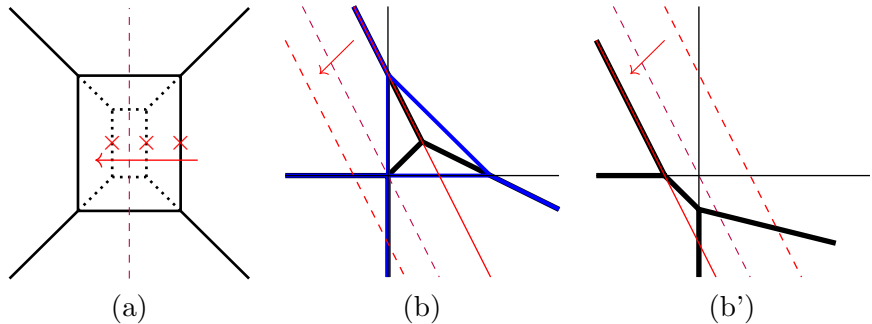

We are left with the case where a marked point meets a vertex during the deformation. In the classical case, there are only two adjacent combinatorial types that provide solutions since the complement of marked points is a forest (a set of rooted trees). Here, due to the different description of solutions given in Proposition \ref{proposition shape of solution}, we have more cases, just as in \cite{blomme2022abelian2}.

Let $V$ be the vertex and $P$ the marked point that meet through the deformation. Let $a$, $b$ and $c$ be the three edges adjacent to $V$ and $\AAA,\BBB$ and $\CCC$ be the connected components of $\Gamma-\left( h^{-1}(\P\backslash\{P\})\cup \{V\} \right)$. Some of these components may coincide if $V$ lies on a cycle.

By Proposition \ref{proposition shape of solution}, $\AAA \cup \BBB \cup \CCC \cup\{V\}$ contains exactly two free ends, two disorienting cycles or one of each, and the marked point $P$ separates the two (or lies on the orienting cycle in case the two disorienting cycles have non-disjoint support).

\begin{figure}[h]
\begin{center}
\begin{tabular}{ccc}
\begin{tikzpicture}[x=0.5cm, y=0.5cm,scale = 0.9]]
\draw (0,0) node[below right] {$V$};
\draw (0,0) node {$\bullet$};
\draw[red] (-4,0)--(0,0) node[midway,above] {$b$};
\draw[red] (-4,0) node[left] {$\BBB$};
\draw[green] (0,-4)--(0,0) node[midway,left] {$c$};
\draw[green] (0,-4) node[below] {$\CCC$};
\draw[blue] (3,3)--(0,0) node[midway,below right] {$a$};
\draw[blue] (3,3) node[above right] {$\AAA$};
\draw (0,0) node {$\bullet$};
\end{tikzpicture} & \begin{tikzpicture}[x=0.5cm, y=0.5cm,scale = 0.9]]
\draw (-4,0)--(0,0)--(0,-4);
\draw[dotted] (-4,-0.5)--(-0.5,-0.5)--(-0.5,-4);
\draw[dotted] (-4,-1)--(-1,-1)--(-1,-4);
\draw (0,0)--(3,3);
\draw[dotted] (-1,-1)--(0,0);
\draw[red] (-2,0) node {$\times$};
\draw[red] (-1.5,-0.5) node {$\times$};
\draw[red] (-1,-1) node {$\times$};
\draw[red] (0,-2) node {$\times$};
\draw[red] (-0.5,-1.5) node {$\times$};
\draw[line width=0.5pt,red,->] (-2.5,-0.5)--++(2,-2);
\draw[dashed,purple] (-3,-3)--(3,3);
\end{tikzpicture} & \begin{tikzpicture}[x=0.5cm, y=0.5cm,scale = 0.9]]
\clip (-4,-4) rectangle (3,3);
\draw (0,0)--++(-6,0)++(6,0)--++(0,-6)++(0,6)--++(6,6);
\draw[dotted,blue] (-0.5,0.5)--++(-6,0)++(6,0)--++(0,-6)++(0,6)--++(6,6);
\draw[blue] (-1,1)--++(-6,0)++(6,0)--++(0,-6)++(0,6)--++(6,6);
\draw[dotted,green] (0.5,-0.5)--++(-6,0)++(6,0)--++(0,-6)++(0,6)--++(6,6);
\draw[green] (1,-1)--++(-6,0)++(6,0)--++(0,-6)++(0,6)--++(6,6);
\draw (0,0)--(3,3);
\draw[dotted] (-1,-1)--(0,0);
\draw[red] (-1,-1) node {$\times$};
\draw[red] (-0.5,-0.5) node {$\times$};
\draw[red] (1,1) node {$+$};
\draw[line width=0.5pt,red,->] (0.5,1.5)--++(-2,-2);
\draw[dashed,purple] (-3,3)--(3,-3);

\end{tikzpicture} \\
(a) & (b) & (c) \\
\end{tabular}
\caption{\label{fig-walls-disorienting-cycle}In (a) the three components cut by the removal of vertex $V$. In (b), the wall where only two out of the three adjacent combinatorial types are allowed. The marked point moves from one edge to the other, deforming the curve. In (c), the wall where all three adjacent combinatorial types are allowed. When the marked point crosses the wall, the black tropical curve becomes the pair of curves given in blue and green. The black curve has an additional disorienting cycle which gets cut on the other side of the wall.}
\end{center}
\end{figure}
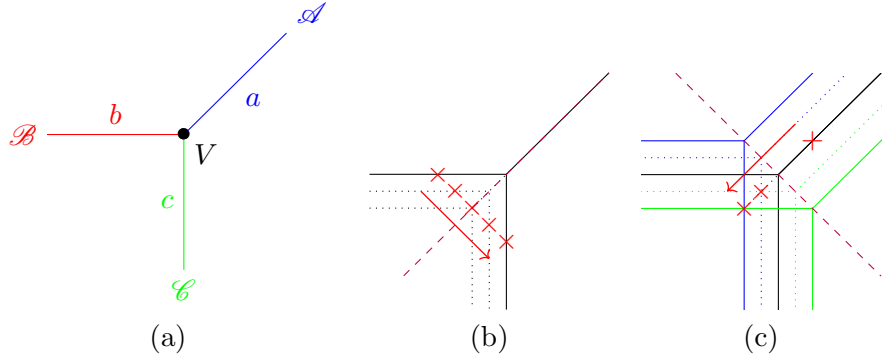

\begin{enumerate}[label=(\roman*)]
\item Assume that all three components are distinct. By the discussion above, exactly one component is bounded, say $\AAA$, while the other components contain either a free end or a disorienting cycle. Then $P$ must lie on either $b$ or $c$. Thus, degeneration leads to two different combinatorial types with the same multiplicity, depending on which side of the line spanned by $a$ the point $P$ lies. This corresponds to Figure \ref{fig-walls-disorienting-cycle}(b).

\item Now, assume that $\BBB=\CCC$. Then, $\BBB$ has at least one cycle $\gamma$ passing through $b$ and $c$. If $\gamma$ is orienting, $P$ needs to lie on $b$ or $c$ and we conclude as in (i). If $\gamma$ is disorienting, there are three combinatorial types.
The first corresponds to $P \in a$, and the other to $P\in b$ or $c$. If $P$ lies on either $b$ or $c$, the disorientng cycle $\gamma$ is now cut by a marked point. Since $\BBB = \CCC$, both combinatorial types this contributes lie on the other side of the wall, and the invariance of multiplicities follows by
$$2^{k-1}\prod m_V+2^{k-1}\prod m_V=2^{k}\prod m_V.$$
This case is depicted on Figure \ref{fig-walls-disorienting-cycle}(c).

\item Finally, assume $\AAA=\BBB=\CCC$. For each pair of edges in $a,b,c$ there exists a cycle $\gamma_{ij}$ passing through both. We have $\gamma_{ab}=\gamma_{ac}+\gamma_{bc}$ in $H_1(\Gamma,\ZZ/2\ZZ)$. Taking the image in $H_1(\TT M_\delta,\ZZ/2\ZZ)$, we see that at least one cycle is orienting. By Proposition \ref{proposition shape of solution}, at most one cycle is orienting, thus exactly one, say $\gamma_{bc}$ is orienting. Hence, $P$ needs to lie on $\gamma_{bc}$, i.e. in  $b$ or $c$, leading to two adjacent combinatorial types, and we conclude as in (i).
\end{enumerate}

\end{proof}

\begin{rem}
It would be interesting to see if the factor $2^k$ appearing in the complex multiplicity can also be refined.
\end{rem}

\section{Correspondence statements}

\subsection{Complex setting}\label{sec-complex-setting}
In the following, we develop the complex setting starting with the definition of the two ruled surfaces $\CC M_0$ and $\CC M_1$ by mimicking the construction of the tropical M\"obius strip. We then identify these surfaces in the classification provided by \cite[Theorem 2.12 and 2.15]{hartshorne2013algebraic}.

	\subsubsection{The surface $\CC M_0$.} Consider the trivial plane bundle $\tau$ over $\CC^*$ and the swapping endomorphism
	$$\Phi_0:(z,u_0,u_1)\longmapsto (\lambda z,u_1,u_0)\in\CC^*\times\CC^2.$$
	The quotient of $\tau$ by $\Phi_0$ descends to a plane bundle $\EEE_0$ over $\CC E=\CC^*/\langle\lambda\rangle$. Its projectivization $\PP(\EEE_0)$ is the ruled surface $\CC M_0$. It can be equivalently constructed as the quotient of $\CC^*\times\CC P^1$ by the action
    $$\varphi_0(z,[u_0:u_1])=(\lambda z,[u_1:u_0]).$$
    The dense torus $(\CC^*)^2\subset\CC^*\times\CC P^1$ is stable under the action which restricts to
    $$\phi_0:(z,w)\mapsto \left(\lambda z,\frac{1}{w}\right),$$
    providing the complex counterpart to the tropical picture. The ruled surface $\CC M_0$ is thus a compactification of the quotient of $(\CC^*)^2$ by the action of $\phi_0$.
    
    Taking the quotient by $\Phi_0^2$ instead (resp. $\varphi_0^2,\phi_0^2$), we obtain a plane (resp. $\CC P^1$, $\CC^*$) bundle over the elliptic curve $\CC\widetilde{E}=\CC^*/\langle\lambda^2\rangle$. Projectivization yields a ruled surface $\CC C_0$ which is a two-to-one cover of $\CC M_0$. The surface $\CC C_0$  is the total space of the trivial $\CC P^1$-bundle, and the complex analogue of the tropical cylinder $\TT C_0$ \cite{blomme2021floor}.
    
    \smallskip
    
    To finish, we need to identify $\CC M_0$ in the classification of ruled surfaces over an elliptic curve. The plane bundle $\EEE_0$ splits into a sum of two line bundles. Indeed, we can consider the sections $s_\pm:z\in\CC^*\mapsto (1,\pm 1)\in\CC^2$ of the trivial bundle. Both are non-vanishing. Hence, each defines a line bundle inside the trivial plane bundle and both are stable under the action of $\Phi_0$. Thus, both sections induce line bundles over $\CC E$, yielding two non-intersecting sections of $\CC M_0$. Thus, the plane bundle $\EEE_0$ is split, and $\CC M_0$ is the projective completion of a line bundle over $\CC E$.
    
    By a change of coordinates $w'=\frac{w-1}{w+1}$, $\CC M_0$ can also be written as the quotient of $\CC^*\times\CC P^1$ by the action of
    $$\varphi'_0:(z,w')\longmapsto (\lambda z,-w'),$$
    which is the projective completion of a $2$-torsion line bundle over $\CC E$.

    \subsubsection{The surface $\CC M_1$.}
    
    We now construct the surface $\CC M_1$ in an analogous way, considering the trivial plane bundle over $\CC^*$ with the twisted swapping action
	$$\Phi_1:(z,u_0,u_1)\longmapsto (\lambda z,zu_1,u_0)\in\CC^*\times\CC^2.$$
	The quotient by the action yields a plane bundle $\EEE_1$ over $\CC E$. Its projectivization $\PP(\EEE_1)$ is the ruled surface $\CC M_1$. Projectivizing first, we also obtain $\CC M_1$ as the quotient of $\CC^*\times\CC P^1$ by the action
	$$\varphi_1(z,[u_0:u_1])=\left(\lambda z,[zu_1:u_0]\right).$$
	The action on the dense torus is given by
	$$\phi_1:(z,w)\mapsto\left(\lambda z,\frac{z}{w}\right),$$
	which is again the complex counterpart to the tropical picture. We have $\phi_1^2(z,w)=(\lambda^2z,\lambda w)$, and the quotient $\CC C_1=\CC^*\times\CC P^1/\langle\phi_1^2\rangle$ is the complex counterpart to the tropical cylinder $\TT C_1$. It is the total space of a $2$-torsion line bundle over $\CC\widetilde{E}=\CC^*/\langle\lambda^2\rangle$.
	
	\smallskip
	
    The surface $\CC M_1$ is a ruled surface if it has a section. Sections of $\CC M_1\to\CC E$ are sections of $\CC C_1\to\CC\widetilde{E}$ invariant under the action induced by $\varphi_1$. As the latter is the projective completion of a $2$-torsion line bundle, these are the two sections provided by the $0$ and $\infty$ section. Unfortunately, none of them is invariant under the action induced by $\varphi_1$: they are exchanged by the action and form a multisection of $\CC M_1\to\CC E$. To find sections, we have to investigate further.
	
	Take  the meromorphic function $\theta:\CC^*\to\CC$ given by $\theta(z)=\sum_{-\infty}^\infty \lambda^{n^2}z^n$. It is the $\theta$-function (see \cite{griffiths2014principles}) on $\CC\widetilde{E}=\CC^*/\gen{\lambda^2}$, and it satisfies
	$$\theta(\lambda^2 z)=\frac{1}{\lambda z}\theta(z) \text{ and }\theta\left(\frac{1}{z}\right)=\theta(z).$$
	It is the only meromorphic function satisfying the first equation up to multiplication by a scalar. Using both equations, we can see that $\theta(-\lambda)=-\theta(-\lambda)$, thus $\theta(-\lambda)=0$. Moreover, $-\lambda$ is  the only zero of $\theta$ modulo multiplication by $\lambda^2$. Any quotient $f(z)=\frac{\theta(\alpha z)}{\theta(\alpha \lambda z)}$ gives a section of $\CC C_1$ since it satisfies
	$$f(\lambda^2 z)= \frac{\lambda\cdot\lambda\alpha z}{\lambda\cdot\alpha z}\frac{\theta(\alpha z)}{\theta(\lambda\alpha z)}=\lambda f(z).$$
	Moreover,
	$$\phi_1\left(z,\mu\frac{\theta(\alpha z)}{\theta(\lambda\alpha z)}\right)=\left(\lambda z,\frac{z}{\mu}\frac{\theta(\alpha\lambda z)}{\theta(\alpha z)}\right)=\left(\lambda z,\frac{z}{\mu\lambda\alpha z}\frac{\theta(\alpha\cdot\lambda z)}{\theta(\lambda\alpha\cdot\lambda z)}\right).$$
	Thus, if $\mu^2\lambda\alpha=1$, it gives a section of $\CC M_1\to\CC E$ and $\CC M_1$ is a ruled surface over $\CC E$.
	
	Sections of $\CC M_1$ pull back to sections of $\CC C_1$. These sections always intersect since we excluded the $0$ and $\infty$-sections. Thus, we cannot find disjoint sections of $\CC M_1$, which prevents $\EEE_1$ from being split. As the line bundle $\Lambda^2\EEE_1$ on $\CC E$ is of odd degree, the classification given by \cite{hartshorne2013algebraic} ensures that $\CC M_1$ is the unique non-split ruled surface of degree $1$, given as $\PP(\mathscr{E})$ where $\mathscr{E}$ is a non-split vector bundle of rank 2 over an elliptic curve fitting in a short exact sequence 
	$$0\to\O\to\mathscr{E}\to \O(p)\to 0.$$

	\subsubsection{Curves and divisors.} As ruled surfaces, the Picard group of the $\CC M_{\delta}$ are 
	$$\mathrm{Pic}(\CC M_{\delta})= \ZZ \oplus \pi^{*} \text{Pic}(\CC E).$$
	A basis of the second homology group is provided by the class of a section and the class of a fiber $F$. We prefer to replace the class of a section by the class of the boundary divisor $E$, which is a $2$-section. As $E\cdot F=2$, they do not form a basis of the $H_2(\CC M_\delta,\ZZ)$. Computing the intersection number of a section with $E$ and $F$, we see that $H_2(\CC M_\delta,\ZZ)$ has the same description as the tropical homology group, i.e.
	$$H_2(\CC M_\delta,\ZZ)=\left\{ aE+bF|a,b\in\frac{1}{2}\ZZ,2b\equiv 2a\delta \text{ mod }2  \right\}.$$

\begin{defi}
    Let $\CC \Gamma_g$ be a genus $g$ Riemann surface. We say that a curve $\varphi:\CC \Gamma_g \rightarrow \CC M_\delta$  is of bidegree $(a,b)$ on a M\"obius strip $\CC M_\delta$ if $\varphi_\ast([\CC \Gamma_g])=aE+bF\in H_2(\CC M_\delta,\ZZ)$.
\end{defi}

	Note that the boundary divisor is not anticanonical: due to the non-orientability of the M\"obius strip, the meromorphic $2$-form $\frac{\mathrm{d}z}{z}\wedge\frac{\mathrm{d}w}{w}$ on $(\CC^*)^2$ does not induce a $2$-form on the quotient $\CC M_\delta$. However, it is possible to construct such a $2$-form using $\theta$-functions, proving the boundary divisor is numerically equivalent to the canonical class.
	
	In the case of $\CC M_0$ ($\CC M_1$ is treated similarly), we have the meromorphic $2$-form $\Omega_{\CC C_0}=\frac{\mathrm{d}z}{z}\wedge\frac{\mathrm{d}w}{w}$ on $\CC^*/\gen{\lambda^2}\times\CC P^1$, which satisfies $\varphi_0^*\Omega_{\CC C_0}=-\Omega_{\CC C_0}$. Now, let $\lambda^{1/2}$ be a square root of $\lambda$ and $\theta(z)=\sum (\lambda^{1/2})^{n^2}z^n$ be the associated $\theta$-function on $\CC E$, which satisfies $\theta(\lambda z)=\frac{1}{\lambda^{1/2}z}\theta(z)$.	Thus, the function $f(z)=\frac{\theta(z)}{\theta(-z)}$ satisfies $f(\lambda z)=-f(z)$, and $f(\lambda^2 z)=f(z)$ and descends to a meromorphic function on $\CC\widetilde{E}$. Thus, the $2$-form $f(z)\Omega_{\CC C_0}$ is $\varphi_0$-invariant and descends to a meromorphic $2$-form on $\CC M_0$. It has poles along the boundary divisor and along a fiber, and zeros along one of the fibers. The poles and zeros along the fibers correspond to poles and zeros of $f$.

\subsection{Log-invariants}

In this section, we briefly define the log-GW invariants of the surfaces $\CC M_\delta$ as the complex counterpart to the tropical problem, following \cite{cavalieri2021counting}, and refer to \cite{cavalieri2021counting,gross2013logarithmic} for more details.

We consider one of the surfaces $\CC M_\delta$ and fix the following discrete data $\beta_{a,\mu,\nu}$:
\begin{itemize}[label=$\ast$,noitemsep]
\item positive integers $g,n$ and a half-integer $a$,
\item two partitions $\mu$ and $\nu$. We set $2b=\|\mu\|+\|\nu\|$ and assume that $2b\equiv 2a\delta \text{ mod }2$.
\end{itemize}

Log-stable maps and Log-GW invariants were introduced in \cite{abramovich2014stable,chen2014stable,gross2013logarithmic}. There is a \textit{moduli stack of log-stable maps} with logarithmic structure
$$\M^{\log}_{g,n+|\mu|+|\nu|}(\CC M_\delta,\beta_{a,\mu,\nu}),$$
parametrizing log-stable maps from a source of genus $g$ with $n+|\mu|+|\nu|$ marked points to $\CC M_\delta$. The partitions $\mu$ and $\nu$ impose the intersection profile with the boundary divisor of  $\CC M_\delta$.

For each of the $n$ marked points, the moduli stack is equipped with an \textit{evaluation map}
$$\mathrm{ev}_i:\M^{\log}_{g,n+|\mu|+|\nu|}(\CC M_\delta,\beta_{a,\mu,\nu})\longrightarrow \CC M_\delta.$$
It maps a log-stable map to the image of the $i$th marked point. Moreover, for each of the marked points mapped to the boundary divisor, we have an additional evaluation map with values in the corresponding divisor:
$$\widehat{\mathrm{ev}_i}:\M^{\log}_{g,n+|\mu|+|\nu|}(\CC M_\delta,\beta_{a,\mu,\nu})\longrightarrow \CC\widetilde{E}.$$

The moduli stack $\M^{\log}_{g,n+|\mu|+|\nu|}(\CC M_\delta,\beta_{a,\mu,\nu})$ is  a proper Deligne-Mumford stack equipped with a \textit{virtual fundamental class} $[\M]^{\log}$ in degree $g-1+|\mu|+|\nu|+n$. We define the log-GW invariants by intersecting the virtual fundamental class with classes provided by the evaluation morphisms. For the dimension counts to agree, we take $n=|\nu|+g-1.$

\begin{defi}
The stationary log-GW invariant is defined by
$$\N_{g,aE+bF}^\delta(\mu,\nu)=\int_{[\M]^{\log}} \prod_1^n\mathrm{ev}_i^*(\mathrm{pt})\prod_1^{|\mu|}\widehat{\mathrm{ev}_i}^*(\mathrm{pt}),$$
where $\mathrm{pt}$ is the cohomology class Poincar\'e dual to a point.
\end{defi}

\subsection{Correspondence}

We now relate the log-GW invariants to the tropical counts. It is possible to construct a log-smooth family of the ruled surfaces $\CC M_\delta$ by varying the base elliptic curve, with a central fiber that is a union of toric surfaces. Log-GW invariants are constant in families, and the decomposition formula by Abramovich-Chen-Gross-Siebert \cite{abramovich2020decomposition} relates the log-GW invariants of the central fiber to tropical invariants.

\begin{theo}\label{theorem-corresp}
The tropical invariant and the log-GW invariant agree:
$$\N^\delta_{g,aE+bF}(\mu,\nu)=N^\delta_{g,aE+bF}(\mu,\nu).$$
\end{theo}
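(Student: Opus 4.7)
The plan is to adapt the strategy of \cite{cavalieri2021counting,bousseau2019tropical} to our non-orientable setting: degenerate $\CC M_\delta$ into a log-smooth central fiber whose dual intersection complex is the tropical M\"obius strip $\TT M_\delta$, use constancy of log-GW invariants in families, and invoke the Abramovich-Chen-Gross-Siebert decomposition formula \cite{abramovich2020decomposition} to expand the invariant as a sum over rigid tropical maps, then match the resulting combinatorial weights against $m_\Gamma^\CC$ of Proposition \ref{prop-computation-mult}.

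First I would construct a one-parameter family $\pi:\X\to \mathrm{Spec}\, k[[t]]$ with generic fiber $\CC M_\delta$ and central fiber $\X_0$ a union of toric (Hirzebruch-type) components glued along fibers, whose dual integral affine complex is $\TT M_\delta$. The idea is to pull back a Mumford-type degeneration of the elliptic base $\CC E$ to a toric degeneration of $\CC M_\delta$, and glue the two extreme components via the involution $\varphi_\delta$ of Proposition \ref{prop: definition moebius strip}, thus producing the correct $\ZZ/2\ZZ$-monodromy on the dual complex. This parallels the cylinder construction of \cite{blomme2021floor} with an additional non-orientable twist; the ruled structure over $\CC E$ guarantees log-smoothness. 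By \cite[Appendix A]{mandel2020descendant} the invariant $\N^\delta_{g,aE+bF}(\mu,\nu)$ of the generic fiber equals the one computed on $\X_0$.

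Next, the decomposition formula expresses this invariant as a sum, over isomorphism classes of rigid decorated tropical maps $h:\Gamma\to\TT M_\delta$ matching the discrete data $(g,a,\mu,\nu)$, of the product of an automorphism weight, edge-weight factors $\prod_{e} w_e$, and vertex log-GW invariants of the toric components. For a generic choice of point constraints, a dimension count restricts the sum to simple tropical maps in the sense of Proposition \ref{proposition shape of solution}, all other combinatorial types varying in families of strictly smaller dimension than the constraints cut out. Each trivalent vertex then contributes a three-pointed genus-zero toric log-GW invariant which, by the correspondence of \cite{nishinou2006toric}, evaluates to the Mikhalkin vertex multiplicity $m_V$. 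The edge-weight product combined with the determinant computation of Proposition \ref{prop-computation-mult} accounts for $\prod_V m_V$, while the fixed tangencies along the boundary divisor produce the $I^\mu$ denominator in the standard way.

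The main obstacle I foresee is accounting for the factor $2^k$ attached to disorienting cycles, which is the genuinely new feature compared to \cite{cavalieri2021counting,bousseau2019tropical,blomme2021floor}. I expect it to arise at the pre-log-to-log enhancement step: each disorienting cycle in the complement of marked points has $\ZZ/2\ZZ$-monodromy in the integral affine structure of $\TT M_\delta$, and the set of log enhancements of a given pre-log map extending around such a cycle forms a $\ZZ/2\ZZ$-torsor, producing a factor of $2$ per disorienting cycle. Equivalently, lifting $h$ to the two-to-one cover $\TT C_\delta\to\TT M_\delta$ yields exactly two distinct lifts per disorienting component, and descent along the deck involution gives the $2^k$ factor. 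Carefully tracking automorphism groups and logarithmic enhancements along these non-trivial cycles, and confirming that non-simple or non-rigid combinatorial types contribute zero, will be the delicate part of the argument; the remaining steps are direct adaptations of the toric and cylinder correspondences already in the literature.
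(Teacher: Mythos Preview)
Your overall strategy matches the paper's: construct a toric degeneration of $\CC M_\delta$ whose dual complex is $\TT M_\delta$, use deformation invariance, apply the decomposition formula of \cite{abramovich2020decomposition}, and compute the per-tropical-curve contribution via the gluing formula of \cite{bousseau2019tropical}. The paper makes the family explicit by taking the cone over a $\varphi_\delta$-periodic subdivision of $\RR^2$ refining all tropical solutions, forming the associated infinite-type toric threefold, and quotienting by the lifted $\varphi_\delta$-action.

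Where you go astray is in isolating the factor $2^k$ as a separate obstacle and attributing it to the pre-log-to-log enhancement step. That step contributes exactly $\prod_{e\in E_b(\Gamma)} w_e$ (this is \cite[Proposition 13]{bousseau2019tropical}), a local product over bounded edges which cannot see global cycles; your alternative double-cover heuristic is also off, since a component containing a disorienting cycle has \emph{connected} preimage in $\TT C_\delta$, not two lifts. In the paper the $2^k$ appears instead in the \emph{pre-log} count: the set of pre-log curves matching the constraints is the fiber of $\Theta\otimes\CC^*$, hence a torsor of size $|\det\Theta|$, and Proposition \ref{prop-computation-mult} already computes $|\det\Theta|\prod_e w_e = \frac{2^k}{I^\mu}\prod_V m_V$, the $2$ per cycle coming from the sign pattern forced by the non-trivializable lattice along a disorienting loop. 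So your sentence that the determinant computation ``accounts for $\prod_V m_V$'' undercounts what that proposition delivers; once you identify the gluing map with $\Theta\otimes\CC^*$, the full multiplicity $m_\Gamma^\CC$, including $2^k$, falls out with no further argument needed.
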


\begin{proof}
To apply the Abramovich-Chen-Gross-Siebert decomposition formula \cite{abramovich2020decomposition}, one needs a family of surfaces. We construct this family using the construction of the surfaces $\CC M_\delta$ presented in Section \ref{sec-complex-setting} and the tropical curves solving the tropical enumerative problem described in Section \ref{sec-tropical-mobius-strips}.

\smallskip

\textbf{Step 1: Constructing a subdivision.} Consider one of the tropical M\"obius strips $\TT M_\delta$, along with a generic configuration $\P$ of $|\nu|+g-1$ points and $|\mu|$ points on the boundary divisor. We can assume that they have rational coordinates. As the configuration is generic, there is a finite number of tropical curves in the class $aE+bF$ of tangency profile $\mu+\nu$ and matching the point and tangency constraints. We then consider a polyhedral subdivision $\Xi$ of $\TT M_\delta$ such that each tropical curve factors through the $1$-skeleton of the subdivision. We can always take such a subdivision by taking the common refinement of subdivisions corresponding to each tropical curve. Up to scaling, we can assume that the coordinates of the vertices of the subdivision along with the length $l$ of the tropical elliptic curve $\TT E$ are integers. Moreover, $\Xi$ projects onto a subdivision $\Sigma$ of $\TT E$.

\smallskip

We unfold the polyhedral subdivision $\Xi$ of $\TT M_\delta$ to get a polyhedral subdivision $\widetilde{\Xi}$ of its universal cover $\RR^2$ by taking the preimage. We do the same for $\Sigma$ to get a polyhedral subdivision $\widetilde{\Sigma}$ of $\RR$. By construction, $\widetilde{\Sigma}$ is stable by translation by $l$ and $\widetilde{\Xi}$ is stable under the action of $\varphi_\delta$, which lifts the translation by $l$ in $\RR$.

\smallskip

\textbf{Step 2: Constructing the family.} We consider the cone over the polyhedral subdivisions $\widetilde{\Sigma}\times\{1\}\subset\RR^2$ and $\widetilde{\Xi}\times\{1\}\subset\RR^3$. This yields two (infinite) fans endowed with a map to $\RR_{\geqslant 0}$ provided by the projection on the last coordinate. Using the construction of toric varieties for these (infinite) fans, we get two complex manifolds $\CC\widetilde{\EEE}$ and $\CC\widetilde{\MMM}$ with a map $\CC\widetilde{\MMM}\to\CC\widetilde{\EEE}$, and a map to $\CC$. The threefold $\CC\widetilde{\MMM}$ is a partial compactification of $(\CC^*)^3$ while $\CC\widetilde{\EEE}$ is a partial compactification of $(\CC^*)^2$.

\smallskip

The translation action on $\widetilde{\Sigma}$ lifts to the fan by $(x,\tau)\mapsto (x+\tau l,\tau)$. It induces a map on the complex surface $\CC\widetilde{\EEE}$ that extends the map of the dense torus
$$(z,t)\longmapsto (\lambda t^l z,t).$$
Similarly, the action of $\varphi_\delta$ lifts to the fan by $(x,y,\tau)\mapsto (x+\tau l,\delta z-w,\tau)$, and there is an extension to the threefold $\CC\widetilde{\MMM}$ that extends the map of $(\CC^*)^3$ to itself
$$(z,w,t)\longmapsto \left(\lambda t^l z,\frac{z^\delta}{w},t\right).$$
We can then consider the quotient by the above actions to get manifolds $\CC\EEE$ and $\CC\MMM$, along with a map $\CC\MMM\to\CC\EEE$ and maps to $\CC$.

\smallskip

The fiber $\CC\MMM_t$ for $t\neq 0$ is the ruled surface $\CC M_\delta$ over the base elliptic curve $\CC\EEE_t=\CC^*/\gen{\lambda t^l}$. The central fiber of the family of elliptic curves $\CC\EEE_0$ is a chain of copies of $\CC P^1$ meeting along their toric divisors, i.e. their respective $0$ and $\infty$. The central fiber $\CC\MMM_0$ is a union of toric surfaces glued along their toric divisors. This construction is just a periodic version of the construction of a family of toric surfaces as done in \cite{nishinou2006toric}.

\smallskip

\textbf{Step 3: Applying the decomposition formula.} The tropical points in $\P$ and the points on the boundary divisor give rise to sections of the family. The log-GW invariants being constant in families, we can compute the log-GW invariant for the central fiber $\CC\MMM_0$. Using \cite[Theorem 5.4]{abramovich2020decomposition}, we can intersect the point constraints with the virtual fundamental class $[\M]^{\log}$ to get a virtual fundamental class $[\M^\P]^{\log}$ of degree $0$, and the log-GW invariant becomes the degree of this $0$-cycle. The virtual fundamental class $[\M^\P]^{\log}$ splits as a sum over the tropical curves solving the tropical enumerative problem:
$$[\M^\P]^{\log}=\sum_{h:\Gamma\to\TT M_\delta} [\M^{h,\P}]^{\log},$$
where $[\M^{h,\P}]^{\log}$ is a virtual fundamental class corresponding to the stable log-maps to the central fiber $\CC\MMM_0$ whose combinatorial type is encoded by $h:\Gamma\to\TT M_\delta$. In other words, the dual graph to the source curve is $\Gamma$, and the component corresponding to a vertex $V$ is mapped to the irreducible component of $\CC\MMM_0$ corresponding to $h(V)$. The log-GW invariant thus splits as a sum
$$\N_{g,aE+bF}^\delta(\mu,\nu)=\sum_{h:\Gamma\to\TT M_\delta}\int_{[\M^{h,\P}]^{\log}}1.$$

\smallskip

\textbf{Step 4: Gluing formula.} As in \cite{cavalieri2021counting}, we are left with the computation of the summands, i.e. the multiplicity $\int_{[\M^{h,\P}]}1$ of each tropical curve. To do so, we use the gluing formula from \cite[Proposition 13]{bousseau2019tropical}, inspired from the proof of \cite[Theorem 1.5,1.6]{kim2018degeneration}, which applies in our setting since it requires a log-smooth family degenerating to a union of toric surfaces meeting along their toric divisors. More precisely, if $V$ is such a vertex, we have a corresponding moduli stack $\M_V$ parametrizing the genus $0$ curves in the toric surface $\CC\MMM_{h(V)}$ (the irreducible component of the central fiber associated to $h(V)$) that have tangency profile with the toric boundary prescribed by the slopes of $h$ on the edges adjacent to $V$ in $\Gamma$. It is endowed with a virtual fundamental class $[\M_V]^{\log}$ and evaluation morphisms. We have a cutting morphism
$$\M^{h,\P}\to\prod_V \M_V,$$
that associates to each log-stable map the restriction to the component associated to the vertex $V$. Moreover, it is mapped to the diagonal $\Delta$ by the evaluation morphism
$$\mathrm{ev}:\prod\M_V\to \prod_e D_e^2,$$
where $D_e$ is the divisor associated to the edge of the subdivision to which the edge $e$ of $\Gamma$ is mapped. The image is called the set of \textit{pre-log} curves. According to \cite{bousseau2019tropical}, this map from $\M^{h,\P}$ to its image is a covering map of degree $\prod_e w_e$ where the product is over the bounded edges of $\Gamma$. One thus needs to count the pre-log curves. Genus $0$ curves to a toric surface with three punctures are parametrized by $(\CC^*)^2$. We thus have the map
$$\prod_V (\CC^*)^2\longrightarrow (\CC^*)^{|E_b|} \times (\CC^*)^{2n}\times(\CC^*)^{|\mu|},$$
which is exactly $\Theta\otimes\CC^*$. As in \cite[Proposition 4.10]{mandel2020descendant}, the set of pre-log curves matching the point constraints is a $\ker\Theta\otimes\CC^*$-torsor, which has thus size the lattice index $|\det\Theta|$ computed in Section \ref{sec-tropical-curves}. In the end, we get the expected tropical multiplicity.
\end{proof}

\begin{rem}\label{rem-lambda-class}
The proof follows the same steps as the proofs in \cite{bousseau2019tropical,cavalieri2021counting}. The difference to \cite{cavalieri2021counting} is that we do not have $\psi$-constraints, which enables us to get the lattice index in the end. The difference with \cite{bousseau2019tropical} is that we do not consider $\lambda$-classes insertions. In particular, the above proof is a particular case of the proof of \cite{bousseau2019tropical}, to which we refer.

In \cite{bousseau2019tropical}, the decomposition formula is used to compute the log-GW invariants with the insertion of a $\lambda$-class, relating them to refined invariants  \cite{block2016refined}. This can also be applied in our setting to prove an analogous result: with $n=2b+g_0-1$, substituting $q=e^{iu}$, one has
$$\left((-i)(q^{1/2}-q^{-1/2})\right)^{2b+2g_0-2}BG^\delta_{g_0,aE+bF}=\sum_{g\geqslant g_0}u^{2g-2+2b}\int_{[\M_{g,n}(\CC M_\delta,aE+bF)]^{\log}}\lambda_{g-g_0}\prod_1^n\mathrm{ev}_i^*(\mathrm{pt}).$$
\end{rem}

\section{Floor diagram algorithm}\label{sec-floor-diag-algo}

\subsection{Floor diagrams}

We start with the formal definition and some examples of abstract \textit{floor diagrams}. In the next subsection, we describe their relationship to \textit{floor-decomposed} tropical curves.

\begin{defi}\label{definition diagram}
A \textit{floor diagram} in $\TT M_\delta$ is an oriented graph with infinite outgoing edges and the additional following data:
\begin{enumerate}[label=$(\roman*)$]
\item Vertices are separated into three disjoint sets: \textit{ground floors}, \textit{\'etages}\footnote{The english language apparently lacks a word for floors which are not ground floors.} and \textit{joints}. \textit{Floors} encompass ground floors and \'etages.
\item An edge $e$, also called \textit{elevator}, has an integer weight $w_e\in\NN$.
\item An \'etage $\F$ has a degree $a_\F\in\NN$.
\item A ground floor $\G$ has a degree $a_\G\in\frac{1}{2}\NN$.
\end{enumerate}
The additional data has to satisfy the following conditions:
\begin{enumerate}[label=(\Alph*)]
\item Ground floors have no incoming edges. Moreover, if $\G$ is a ground floor, we have the balancing condition:
$$\sum_{e\ni\G}w_e\equiv \delta 2a_\G \text{ mod }2 = \begin{cases}
    0 & \text{ for } \TT M_0\\
    2a_\G & \text{ for }\TT M_1.
\end{cases} $$
\item \'Etages have zero divergence: the sum of weights of incoming edges is equal to the sum of weights of outgoing edges.
\item Joints have exactly two outgoing edges of the same weight and no incoming edge.
\end{enumerate}
\end{defi}

\begin{rem}
 The degrees of the ground floors are allowed to be half-integers analogous to the coefficients of $aE+bF$ in $H_{1,1}(\TT M_\delta,\ZZ)$. Further, note that unlike classical floor diagrams, vertices are linearly ordered but all ground floors and joints have the same height. 
\end{rem}

\begin{defi}
Given a floor diagram $\Dfk$, we define the following:
\begin{itemize}[label=$\ast$]
\item The \textit{genus} of $\Dfk$ is its genus as a graph plus its number of floors.
\item The diagram $\Dfk$ is said to be in the class $aE+bF$ if
the sum of the weights of infinite outgoing edges is $2b$, and the sum of degrees of the floors (both ground floors and \'etages) is equal to $a$.

\item A diagram in the class $aE+bF$ is of tangency profile $\mu\vdash 2b$ if it has $\mu_i$ ends of weight $i$ for each $i$.
\end{itemize}
\end{defi}

\begin{figure}
\begin{center}
\begin{tabular}{c c c}
\begin{tikzpicture}[scale = 0.9]
    \node[thick, draw=blue, ellipse,
    align=center,blue,minimum height=24pt, minimum width = 12pt] at  (0,0)(0){\footnotesize $\frac{1}{2}$};
    \node[thick, draw=red, ellipse,
    align=center,minimum height=24pt, minimum width = 12pt,red] at  (1.5,0)(1){$1$};
    \draw [->](0) to [out=30,in=150] (1);
    \draw [->] (0) to [out=-30,in=-150] (1);
    \draw [->] (1) to [out=30,in=180](3,0.15);
    \draw [->] (1) to[out=-30,in=180] (3,-0.15);
    \node[below] at (2.7,-0.5){$(a)$};
    \end{tikzpicture}
&
    \begin{tikzpicture}[scale = 0.9]
\usetikzlibrary{shapes}
    \node[thick, draw=blue, ellipse,
    align=center,blue,minimum height=24pt, minimum width = 12pt] at  (0.25,-0.5)(1){\footnotesize $\frac{1}{2}$};
    \node[thick, draw=orange, ellipse,
    align=center,minimum height=15pt, minimum width = 10pt] at  (0.25,0.5)(2){};
    \node[thick, draw=red, ellipse,
    align=center,minimum height=24pt, minimum width = 12pt,red] at  (1.5,0)(3){$1$};
    \draw[->]  (1) to [out =20, in=210] (3);
    \draw[->]  (1) to [out =-10, in=230] (3);
    \draw[->]  (2) to [out =10, in=150] (3);
    \draw[->]  (2) to [out =-20, in=170] (3);
    \draw [->] (3) to [out=25,in=180](3,0.15);
    \draw [->] (3) to[out=-25,in=180] (3,-0.15);
    \draw [->] (3) --(3,0);
    \node [above] at (2.25,0.2){$2$};
    \node [below] at (2.25,-0.5){$(b)$};
    \end{tikzpicture}  & 
    \begin{tikzpicture}[scale = 0.9]
    \node[thick, draw=blue, ellipse,
    align=center,blue,minimum height=24pt, minimum width = 12pt] at  (0.5,0.5)(0){\footnotesize $\frac{1}{2}$};
    \node[thick, draw=orange, ellipse,
    align=center,minimum height=15pt, minimum width = 10pt] at  (0.5,-0.5)(1){};
    \node[thick, draw=red, ellipse,
    align=center,minimum height=24pt, minimum width = 12pt,red] at  (1.75,0)(2){$2$};
    \node[thick, draw=red, ellipse,
    align=center,minimum height=24pt, minimum width = 12pt,red] at  (3,0)(3){$1$};
    \draw [->] (0) to [out=40,in=180] (4.5, 0.4);
    \draw [->] (0) to [out=0,in=150] (2);
    \draw [->] (1) to [out=0,in=-150] (2);
    \draw [->] (1) to [out=-40,in=-140] (3);
    \draw [->] (2) to (3);
    \draw [->] (3) to [out=30,in=180](4.5,0.15);
    \draw [->] (3) to[out=-30,in=180] (4.5,-0.15);
    \node[above] at (2.325,0){$2$};
    \node [below] at (4,-0.15){$2$};
    \node[below] at (3.75,-0.5){$(c)$};
    \end{tikzpicture}\\
\begin{tikzpicture}[scale = 0.9]
    \node[thick, draw=orange, ellipse,
    align=center,minimum height=15pt, minimum width = 10pt] at  (0,0)(0){};
    \node[thick, draw=red, ellipse,
    align=center,minimum height=24pt, minimum width = 12pt,red] at  (1.5,0)(1){$1$};
    \draw [->](0) to [out=30,in=150] (1);
    \draw [->] (0) to [out=-30,in=-150] (1);
    \draw [->] (1) to [out=30,in=180](3,0.15);
    \draw [->] (1) to[out=-30,in=180] (3,-0.15);
    \node[below] at (2.7,-0.5){$(a)'$};
    \end{tikzpicture}& 
    \begin{tikzpicture}[scale = 0.9]
\usetikzlibrary{shapes}
    \node[thick, draw=blue, ellipse,
    align=center,blue,minimum height=24pt, minimum width = 12pt] at  (1.75,-1)(1){\footnotesize $\frac{1}{2}$};
    \node[thick, draw=blue, ellipse,
    align=center,blue,minimum height=24pt, minimum width = 12pt] at  (1.75,1)(0){\footnotesize $\frac{1}{2}$};
    \node[thick, draw=orange, ellipse,
    align=center,minimum height=15pt, minimum width = 10pt] at  (1.75,0)(2){};
    \node[thick, draw=red, ellipse,
    align=center,minimum height=24pt, minimum width = 12pt,red] at  (3,0)(3){$1$};
    \draw[->]  (0) to [out =-20, in=130] (3);
    \draw[->]  (1) to [out =20, in=-130] (3);
    \draw[->]  (2) to [out =30, in=160] (3);
    \draw[->]  (2) to [out =-30, in=-160] (3);
    \draw [->] (3) to [out=25,in=180](4.5,0.15);
    \draw [->] (3) to[out=-25,in=180] (4.5,-0.15);
    \node [above] at (3.75,0.2){$3$};
    \node [below] at (3.75,-0.5){$(b)'$};
    \end{tikzpicture}
    &
    \begin{tikzpicture}[scale = 0.9]
    \node[thick, draw=blue, ellipse,
    align=center,blue,minimum height=24pt, minimum width = 12pt] at  (0.5,0.5)(0){\footnotesize $\frac{1}{2}$};
    \node[thick, draw=orange, ellipse,
    align=center,minimum height=15pt, minimum width = 10pt] at  (0.5,-0.5)(1){};
    \node[thick, draw=red, ellipse,
    align=center,minimum height=24pt, minimum width = 12pt,red] at  (1.75,0)(2){$2$};
    \node[thick, draw=red, ellipse,
    align=center,minimum height=24pt, minimum width = 12pt,red] at  (3,0)(3){$1$};
    \draw [->] (0) to [out=0,in=150] (2);
    \draw [->] (1) to [out=0,in=-150] (2);
    \draw [->] (1) to [out=-40,in=-140] (3);
    \draw [->] (2) to (3);
    \draw [->] (3) to [out=30,in=180](4.5,0.15);
    \draw [->] (3) to[out=-30,in=180] (4.5,-0.15);
    \node[above] at (2.325,0){$2$};
    \node [above] at (3.75,0.2){$2$};
    \node[below] at (3.75,-0.5){$(c)'$};
    \end{tikzpicture}
\end{tabular}
\caption{\label{fig-floor-diagrams-expl} Some examples of floor diagrams in $\TT M_0$ and $\TT M_1$. In the top row, we give examples in  $\TT M_0$ and in the bottom row, we give some in $\TT M_1$.}
\end{center}
\end{figure}
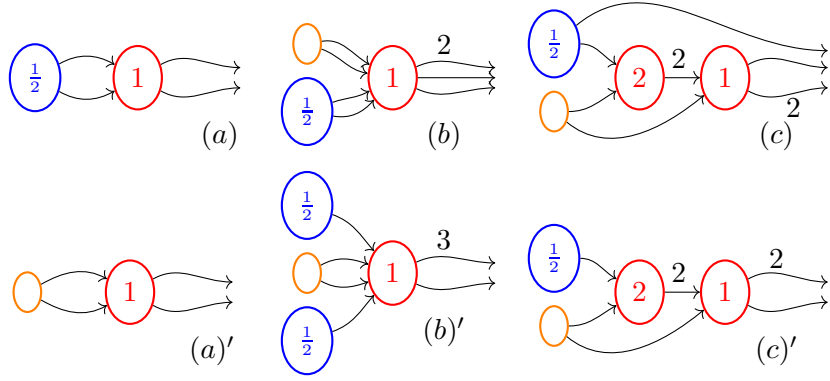

\begin{expl}\label{ex genus class tangency floor diagrams}
In Figure \ref{fig-floor-diagrams-expl}, we give several examples of floor diagrams in $\TT M_0$ and $\TT M_1$. In all pictures in this section, we draw elevators in black, ground floors in blue, \'etages in red and joints in orange. Unlabeled edges have weight one. 
Note that the main difference for floor diagrams for $\TT M_0$ and $\TT M_1$ is the treatment of ground floors. The floor diagrams in Figure \ref{fig-floor-diagrams-expl} are of the following genus, class and tangency profiles:
\begin{center}
\begin{tabular}{c  c | c | c || c c | c | c}
     &  Genus & Class & Tangency & & Genus & Class & Tangency\\
     $(a)$ & $3$ & $\frac{3}{2}E+F$ & $1^2$ & $(a)'$ & $2$ & $E+F$ & $1^2$ \\ 
     $(b)$ & $4$ & $\frac{3}{2}E+2F$ &$1^22^1$& $(b)'$ & $4$ & $2E+2F$ &$3^11^1$\\
     $(c)$ & $4$ & $\frac{7}{2}E+2F$ &$1^22^1$& $(c)'$ & $4$ & $\frac{7}{2}E+\frac{3}{2}F$ &$2^11^1$\\
\end{tabular}
\end{center}\end{expl}

\begin{defi}\label{definition marking diagram}
Let $\Dfk$ be a floor diagram of genus $g$ and tangency profile $\mu$. A \textit{marking} $\mfk$ of a floor diagram $\Dfk$ is an increasing map $\mfk:[\![1;|\mu|+g-1]\!]\to\Dfk$ (i.e. $\mfk(i)\prec\mfk(j)\Rightarrow i<j$, where $\prec$ is the order relation nn the cycle-free oriented graph $\Dfk$), such that:
\begin{enumerate}[label=(\alph*)]
\item No marking is mapped to a ground floor or a joint, and a unique marking is mapped to each \'etage.
\item At most one marking is mapped to an elevator.
\item Each component of the complement of markings on elevators is of one of the following types:
	\begin{enumerate}[label=$(\roman*)$]
	\item It contains a unique ground floor and no cycles or free ends.
	\item It contains a unique free end, and no cycles or ground floors.
	\item It has a unique cycle containing an odd number of joints and no free ends or ground floors.
	\end{enumerate}
\end{enumerate}
\end{defi}

\begin{figure}
    
\begin{center}
\begin{tabular}{c c c c}
    \begin{tikzpicture}[scale = 0.9]
\usetikzlibrary{shapes}
    \node[thick, draw=blue, ellipse,
    align=center,blue,minimum height=24pt, minimum width = 12pt] at  (1.75,0.5)(2){\footnotesize $\frac{1}{2}$};
    \node[thick, draw=orange, ellipse,
    align=center,minimum height=15pt, minimum width = 10pt] at  (1.75,-0.5)(1){};
    \node[thick, draw = red, fill=red, fill opacity =0.5, text opacity = 1, ellipse,
    align=center,minimum height=24pt, minimum width = 12pt,text = black] at  (3,0)(3){$1$};\
    
    \draw[->]  (1) to [out =20, in=210](3);
    \draw[->]  (1) to [out =-10, in=230] (3);
    \draw[->]  (2) to [out =10, in=150] node[midway,fill = black, circle, inner sep = 0, minimum size =5pt]{}  (3);
    \draw[->]  (2) to [out =-20, in=170] node[midway,fill = black, circle, inner sep = 0, minimum size =5pt]{}  (3);
    \draw [->] (3) to [out=25,in=180]node[midway,fill = black, circle, inner sep = 0, minimum size =5pt]{} (4.5,0.15);
    \draw [->] (3) to[out=-25,in=180]node[midway,fill = black, circle, inner sep = 0, minimum size =5pt]{}  (4.5,-0.15);
    \draw [->] (3) --node[midway,fill = black, circle, inner sep = 0, minimum size =5pt]{} (4.5,0);
    \node [above] at (3.75,0.2){$2$};
    \node [below] at (3.75,-0.5){$(b)$};
    \end{tikzpicture} & 
    \begin{tikzpicture}[scale = 0.9]
    \node[thick, draw=blue, ellipse,
    align=center,blue,minimum height=24pt, minimum width = 12pt] at  (0.75,0.5)(0){\footnotesize $\frac{1}{2}$};
    \node[thick, draw=orange, ellipse,
    align=center,minimum height=15pt, minimum width = 10pt] at  (0.75,-0.5)(1){};
    \node[thick, draw = red, fill=red, fill opacity =0.5, text opacity = 1, ellipse,
    align=center,minimum height=24pt, minimum width = 12pt,text = black] at  (1.75,0)(2){$2$};
\node[thick, draw = red, fill=red, fill opacity =0.5, text opacity = 1, ellipse,
    align=center,minimum height=24pt, minimum width = 12pt,text = black] at  (3,0)(3){$1$};
    
    \draw [->] (0) to [out=40,in=180] node[midway,fill = black, circle, inner sep = 0, minimum size =5pt]{}(4.5, 0.4);
    \draw [->] (0) to [out=0,in=150]node[midway,fill = black, circle, inner sep = 0, minimum size =5pt]{}  (2);
    \draw [->] (1) to [out=0,in=-150] (2);
    \draw [->] (1) to [out=-40,in=-140] (3);
    \draw [->] (2) to (3);
    \draw [->] (3) to [out=30,in=180]node[midway,fill = black, circle, inner sep = 0, minimum size =5pt]{}(4.5,0.15);
    \draw [->] (3) to[out=-30,in=180]node[midway,fill = black, circle, inner sep = 0, minimum size =5pt]{} (4.5,-0.15);

    \node[above] at (2.325,0){$2$};
    \node [below] at (4,-0.15){$2$};
    \node[below] at (3.75,-0.5){$(c)$};
    \end{tikzpicture} &
    \begin{tikzpicture}[scale = 0.9]
\usetikzlibrary{shapes}
    \node[thick, draw=blue, ellipse,
    align=center,blue,minimum height=24pt, minimum width = 12pt] at  (1.75,-1)(1){\footnotesize $\frac{1}{2}$};
    \node[thick, draw=blue, ellipse,
    align=center,blue,minimum height=24pt, minimum width = 12pt] at  (1.75,1)(0){\footnotesize $\frac{1}{2}$};
    \node[thick, draw=orange, ellipse,
    align=center,minimum height=15pt, minimum width = 10pt] at  (1.75,0)(2){};\node[thick, draw = red, fill=red, fill opacity =0.5, text opacity = 1, ellipse,
    align=center,minimum height=24pt, minimum width = 12pt,text = black]  at  (3,0)(3){$1$};\
    \draw[->]  (0) to [out =-20, in=130]node[midway,fill = black, circle, inner sep = 0, minimum size =5pt]{} (3);
    \draw[->]  (1) to [out =20, in=-130]node[midway,fill = black, circle, inner sep = 0, minimum size =5pt]{}  (3);
    \draw[->]  (2) to [out =30, in=160] (3);
    \draw[->]  (2) to [out =-30, in=-160] (3);
    \draw [->] (3) to [out=25,in=180]node[midway,fill = black, circle, inner sep = 0, minimum size =5pt]{} (4.5,0.15);
    \draw [->] (3) to[out=-25,in=180] node[midway,fill = black, circle, inner sep = 0, minimum size =5pt]{} (4.5,-0.15);
    \node [above] at (3.75,0.2){$3$};
    \node [below] at (3.75,-0.5){$(b)'$};
    \end{tikzpicture} & 
    \begin{tikzpicture}[scale = 0.9]
    \node[thick, draw=blue, ellipse,
    align=center,blue,minimum height=24pt, minimum width = 12pt] at  (0.75,0.5)(0){\footnotesize $\frac{1}{2}$};
    \node[thick, draw=orange, ellipse,
    align=center,minimum height=15pt, minimum width = 10pt] at  (0.75,-0.5)(1){};\node[thick, draw = red, fill=red, fill opacity =0.5, text opacity = 1, ellipse,
    align=center,minimum height=24pt, minimum width = 12pt,text = black]at  (1.75,0)(2){$2$};
    \node[thick, draw = red, fill=red, fill opacity =0.5, text opacity = 1, ellipse,
    align=center,minimum height=24pt, minimum width = 12pt,text = black] at  (3,0)(3){$1$};
    \draw [->] (0) to [out=40,in=180] node[midway,fill = black, circle, inner sep = 0, minimum size =5pt]{}(4.5, 0.4);
    \draw [->] (0) to [out=0,in=150]node[midway,fill = black, circle, inner sep = 0, minimum size =5pt]{}  (2);
    \draw [->] (1) to [out=0,in=-150] (2);
    \draw [->] (1) to [out=-40,in=-140] (3);
    \draw [->] (2) to [out=30,in=150]node[midway,fill = black, circle, inner sep = 0, minimum size =5pt]{}  (3);
    \draw [->] (2) to [out=-30,in=-150]  (3);
    \draw [->] (3) to [out=30,in=180]node[midway,fill = black, circle, inner sep = 0, minimum size =5pt]{}(4.5,0.15);
    \draw [->] (3) to[out=-30,in=180]node[midway,fill = black, circle, inner sep = 0, minimum size =5pt]{} (4.5,-0.15);
    \draw [->] (3) --node[midway,fill = black, circle, inner sep = 0, minimum size =5pt]{}(4.5,0);
    \node[below] at (3.75,-0.5){$\overline{(c)}$};
    \end{tikzpicture}
\end{tabular}
\end{center}
    \caption{Markings on the floor diagrams $(b), (c)$ and $(b)'$ from Figure \ref{fig-floor-diagrams-expl}. $(b), (c)$ and $\overline{(c)}$ are floor diagrams on $\TT M_0$ and $(b)'$ is on $\TT M_1$.}
    \label{fig:markings on floor diagrams}
\end{figure}
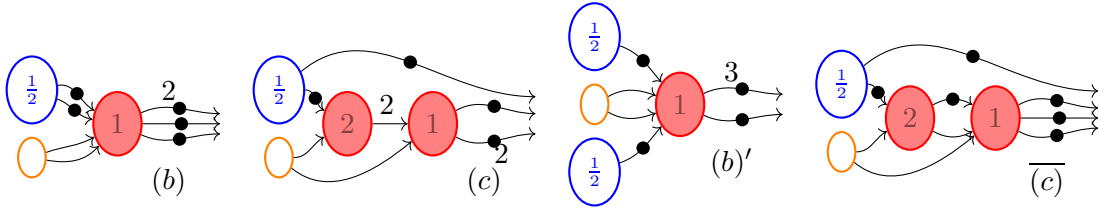

\begin{expl}
We give some examples for markings on floor diagrams in Figure \ref{fig:markings on floor diagrams}.
\end{expl}

\subsection{Floor diagrams from tropical curves}

 We now define a floor diagram for any tropical curve, and show that for a \textit{stretched} choice of constraints, the point conditions induce a marking of the diagram satisfying the condition from Definition \ref{definition marking diagram}, analogous to the cylinder case, \cite{blomme2021floor}. Let $l$ denote  the length of the underlying elliptic curve $\E$.
\begin{prop}
Let $h:\Gamma\to\TT M_\delta$ be a parametrized tropical curve in the class $aE+bF$. Then the slope of its edges can only take a finite number of values, and the lengths of the non-vertical edges are bounded by a constant that only depends on $a,b$ and $l$.
\end{prop}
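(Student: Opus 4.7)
My plan is to use tropical intersection theory on $\TT M_\delta$, pairing $h(\Gamma)$ against two natural families of one-dimensional cycles: the vertical fibers of the projection $\pi : \TT M_\delta \to \TT E$, and tropical translates of the boundary divisor $E$, in the spirit of the approach taken for cylinders in \cite{blomme2021floor}.

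First I would bound the horizontal component $p_e$ of the slope of every edge. At a generic fiber $\pi^{-1}(x_0)$, the class-theoretic intersection $(aE+bF)\cdot F = 2a$ equals $\sum_e |p_e|\, N_e(x_0)$, where $N_e(x_0)$ is the number of geometric crossings of $h(e)$ with the fiber and $|p_e| = |\det((p_e,q_e),(0,1))|$ is the local tropical multiplicity. If $p_e \neq 0$, then $(\pi\circ h)(e)$ is an arc of positive length in $\TT E$ and therefore contains generic points $x_0$ with $N_e(x_0)\geq 1$, forcing $|p_e|\leq 2a$.

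Next I would apply an analogous argument with horizontal translates of $E$ to bound the other component of the slope. The key preliminary is that $E^2 = 0$ in both $\TT M_0$ and $\TT M_1$, which I would verify by deforming $E$ to a disjoint parallel horizontal loop at large finite height; consequently $E\cdot(aE+bF) = 2b$. A generic tropical representative of the class $E$ has tangent direction equal to the preserved horizontal direction $v_\delta$, namely $(1,0)$ for $\TT M_0$ and $(2,1)$ for $\TT M_1$. The local multiplicity at a crossing with an edge of slope $(p_e,q_e)$ is then $|q_e|$ on $\TT M_0$ and $|p_e - 2q_e|$ on $\TT M_1$. The same argument as before gives $|q_e|\leq 2b$ on $\TT M_0$ and $|p_e - 2q_e|\leq 2b$ on $\TT M_1$; combined with the bound on $|p_e|$, the full slope vector is bounded, so only finitely many slope values in $\ZZ^2$ can occur.

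For the length bound, I would integrate the fiber identity over $x_0\in\TT E$. Using $\int_{\TT E} N_e(x_0)\, dx_0 = |p_e|\,\ell(e)$ (the total length of $(\pi\circ h)(e)$ counted with multiplicity), the identity $\sum_e |p_e|\, N_e(x_0) = 2a$ integrates to $\sum_e p_e^2\,\ell(e) = 2al$. Hence for any non-vertical edge we have $|p_e|\geq 1$ and $\ell(e)\leq p_e^2\,\ell(e)\leq 2al$, giving the required bound depending only on $a$ and $l$. The main technical obstacle I anticipate is setting up the intersection computations rigorously on the non-orientable surface, in particular justifying the deformation that gives $E^2 = 0$ on $\TT M_1$ and exhibiting a genuine tropical curve in the class $E$ whose tangent is globally the preserved direction $(2,1)$ (the naive horizontal slice in a fundamental domain has a tangent that jumps at the identification), so that the local multiplicity formula applies cleanly at every intersection point.
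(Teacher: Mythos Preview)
Your proposal is correct and follows essentially the same approach that the paper defers to: the paper's proof here consists only of the sentence ``The proof is identical to the proof from \cite{blomme2021floor}, to which we refer the reader,'' and your argument is precisely the adaptation of that cylinder argument---intersecting with fibers to bound the horizontal slope, intersecting with translates of the boundary class to bound the remaining component, and integrating the fiber identity over $\TT E$ to bound the lengths.

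A couple of minor comments. Your length bound $\ell(e)\le 2al$ is actually independent of $b$, which is slightly sharper than the statement requires. The technical obstacles you flag are real but easily resolved: on $\TT M_1$, a representative of $E$ at finite height is obtained by pushing down a horizontal section of the cylindrical double cover $\TT C_1$ (a line of the preserved slope $(2,1)$ together with its $\varphi_1$-translate), and two such representatives at different heights are parallel and disjoint, giving $E^2=0$. Once you have that representative, the local intersection multiplicity $|\det((p_e,q_e),v_\delta)|$ is well-defined at each transverse crossing regardless of orientability, since tropical stable intersection uses $|\det|$; the non-orientability plays no role in this particular argument.
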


\begin{proof}
The proof is identical to the proof from \cite{blomme2021floor}, to which we refer the reader.
\end{proof}
\
We now construct a floor diagram from a tropical curve. Let $h:\Gamma\to\TT M_\delta$ be a tropical curve in the class $aE+bF$ with tangency profile $\mu$ and genus $g$. We match the definitions of floor diagrams by calling an edge with vertical slope an \textit{elevator}, and a connected component of the complement of elevators a \textit{floor}. Floors containing a disorienting cycle correspond to \textit{ground floors}, while all other floors are \textit{\'etages}. We can now form a floor diagram where vertices are the floors of the curve, two vertices are linked by an edge if both floors are linked by an elevator,
joints are inserted in the middle of elevators that meet the soul of the M\"obius strip,
 the weight of an elevator is its weight as an edge of $\Gamma$,
and the degree of a floor is equal to half its intersection number with a fiber.

Moreover, assuming the curve passes through a collection of points $\P$, indexed by $[\![1;|\mu|+g-1]\!]$, we have a map $[\![1;|\mu|+g-1]\!]\to\Dfk$ that maps a marked point to the part of $\Dfk$ that contains it.
\begin{expl}
In Figure \ref{fig:constructing floor diagram from tropical curve} we show how to construct floor diagrams from tropical curves on both M\"obius strips for the tropical curves $(c)'$ from Figure \ref{figure curves TM0} and Figure \ref{figure curves TM1} respectively. The resulting floor diagrams are the diagrams $(b)$ and $(b)'$ in Figure \ref{fig-floor-diagrams-expl}. We note that the class, genus and tangency profile of the tropical curves computed in Examples \ref{ex degrees on TM0} and \ref{ex degrees on TM1} coincide with the ones of the corresponding floor diagrams computed in Example \ref{ex genus class tangency floor diagrams}.
\end{expl}
\begin{figure}
\begin{center}
\begin{tabular}{c c}
\begin{tikzpicture}[line cap=round,line join=round,>=triangle 45,x=0.3cm,y=0.3cm,scale = 0.9]
\clip(-1,-1) rectangle (11,12);
\draw [line width=1pt] (0,0)-- (0,15) node[midway,sloped] {$>>$};
\draw [line width=1pt] (10,0)-- (10,15) node[midway,sloped] {$>>$};
\draw [line width=1pt] (0,0) node {$\bullet$} -- (5,0) node[midway,sloped] {$>$} node {$\bullet$} -- (10,0) node[midway,sloped] {$>$} node {$\bullet$};

\draw [blue, line width=2pt] (1,0)--++(1,1)--++(3,0)--++(1,-1);
\draw [red, line width=2pt] (0,4)--++(2,0)--++(1,1)--++(1,-1)--++(1,0)--++(1,1)--++(2,0)--++(1,-1)--++(1,0);
\fill[yellow] (1,4) circle (3pt);
\draw (1,4)  circle (3pt);
\draw [line width=2pt] (2,1)--(2,4);
\fill[yellow] (2,1.75) circle (3pt);
\draw (2,1.75) circle (3pt);
\draw [orange, line width=2pt] (4,0)--(4,4);
\draw [line width=2pt] (5,1)--(5,4);
\fill[yellow] (5,2.75) circle (3pt);
\draw (5,2.75) circle (3pt);
\draw [orange, line width=2pt] (9,0)--(9,4);
\draw [line width=2pt] (3,5)--(3,12) node[midway,right] {$2$};
\fill[yellow] (3,6.25) circle (3pt);
\draw (3,6.25)  circle (3pt);
\draw [line width=2pt] (6,5)--(6,12);
\fill[yellow] (6,9.25) circle (3pt);
\draw (6,9.25)  circle (3pt);
\draw [line width=2pt] (8,5)--(8,12);
\fill[yellow] (8,11) circle (3pt);
\draw (8,11)  circle (3pt);

\draw [dashed,line width=2pt,opacity=0.25] (2.5,0)--(2.5,12);
\draw [dashed,line width=2pt,opacity =0.25] (7.5,0)--(7.5,12);
\end{tikzpicture} &
 \begin{tikzpicture}[line cap=round,line join=round,>=triangle 45,x=0.3cm,y=0.3cm,scale = 0.9]
\clip(-1,-1) rectangle (11,16);
\draw [line width=1pt] (0,0)-- (0,15) node[midway,sloped] {$>>$};
\draw [line width=1pt] (10,5)-- (10,20) node[midway,sloped] {$>>$};
\draw [line width=1pt] (0,0) node {$\bullet$} -- (5,0) node[midway,sloped] {$>$} node {$\bullet$} -- (10,5) node[midway,sloped] {$>$} node {$\bullet$};

\draw [blue, line width=2pt] (1,0)--++(1,1)--++(4,0);
\draw [blue, line width=2pt] (2,0)--++(2,2)--++(3,0);
\draw [red, line width=2pt] (0,4)--++(2,0)--++(1,1)--++(1,2)--++(1,3)--++(2,0)--++(1,-1)--++(2,0);
\fill[yellow] (1,4) circle (3pt);
\draw (1,4)  circle (3pt);
\draw [line width=2pt] (2,1)--(2,4);
\fill[yellow] (2,1.75)circle (3pt);
\draw (2,1.75)  circle (3pt);
\draw [orange,line width=2pt] (3,0)--(3,5);
\draw [line width=2pt] (4,2)--(4,7);
\fill[yellow] (4,3.25) circle (3pt);
\draw (4,3.25)  circle (3pt);
\draw [orange,line width=2pt] (8,3)--(8,9);
\draw [line width=2pt] (5,10)--(5,15) node[midway,left] {$3$};
\fill[yellow] (5,11.25) circle (3pt);
\draw (5,11.25)  circle (3pt);
\draw [ line width=2pt] (7,10)--(7,15);
\fill[yellow] (7,13.75) circle (3pt);
\draw (7,13.75)  circle (3pt);
\draw [dashed,line width=2pt,opacity=0.25] (2.5,0)--(2.5,15);
\draw [dashed,line width=2pt,opacity =0.25] (7.5,2.5)--(7.5,15);
\end{tikzpicture}\\
    \begin{tikzpicture}[scale = 0.9]
\usetikzlibrary{shapes}
    \node[thick, draw=blue, ellipse,
    align=center,blue,minimum height=24pt, minimum width = 12pt] at  (1.75,0.5)(2){\footnotesize $\frac{1}{2}$};
    \node[thick, draw=orange, ellipse,
    align=center,minimum height=15pt, minimum width = 10pt] at  (1.75,-0.5)(1){};
    \node[thick, draw = red, fill=red, fill opacity =0.5, text opacity = 1, ellipse,
    align=center,minimum height=24pt, minimum width = 12pt,text = black] at  (3,0)(3){$1$};\
    
    \draw[->]  (1) to [out =20, in=210](3);
    \draw[->]  (1) to [out =-10, in=230] (3);
    \draw[->]  (2) to [out =10, in=150] node[midway,fill = black, circle, inner sep = 0, minimum size =5pt]{}  (3);
    \draw[->]  (2) to [out =-20, in=170] node[midway,fill = black, circle, inner sep = 0, minimum size =5pt]{}  (3);
    \draw [->] (3) to [out=25,in=180]node[midway,fill = black, circle, inner sep = 0, minimum size =5pt]{} (4.5,0.15);
    \draw [->] (3) to[out=-25,in=180]node[midway,fill = black, circle, inner sep = 0, minimum size =5pt]{}  (4.5,-0.15);
    \draw [->] (3) --node[midway,fill = black, circle, inner sep = 0, minimum size =5pt]{} (4.5,0);
    \node [above] at (3.75,0.2){$2$};
    \node [below] at (3.75,-0.5){$(b)$};
    \end{tikzpicture}&
    \begin{tikzpicture}[scale = 0.9]
\usetikzlibrary{shapes}
    \node[thick, draw=blue, ellipse,
    align=center,blue,minimum height=24pt, minimum width = 12pt] at  (1.75,-1)(1){\footnotesize $\frac{1}{2}$};
    \node[thick, draw=blue, ellipse,
    align=center,blue,minimum height=24pt, minimum width = 12pt] at  (1.75,1)(0){\footnotesize $\frac{1}{2}$};
    \node[thick, draw=orange, ellipse,
    align=center,minimum height=15pt, minimum width = 10pt] at  (1.75,0)(2){};\node[thick, draw = red, fill=red, fill opacity =0.5, text opacity = 1, ellipse,
    align=center,minimum height=24pt, minimum width = 12pt,text = black]  at  (3,0)(3){$1$};\
    \draw[->]  (0) to [out =-20, in=130]node[midway,fill = black, circle, inner sep = 0, minimum size =5pt]{} (3);
    \draw[->]  (1) to [out =20, in=-130]node[midway,fill = black, circle, inner sep = 0, minimum size =5pt]{}  (3);
    \draw[->]  (2) to [out =30, in=160] (3);
    \draw[->]  (2) to [out =-30, in=-160] (3);
    \draw [->] (3) to [out=25,in=180]node[midway,fill = black, circle, inner sep = 0, minimum size =5pt]{} (4.5,0.15);
    \draw [->] (3) to[out=-25,in=180] node[midway,fill = black, circle, inner sep = 0, minimum size =5pt]{} (4.5,-0.15);
    \node [above] at (3.75,0.2){$3$};
    \node [below] at (3.75,-0.5){$(b)'$};
    \end{tikzpicture}
\end{tabular}
\end{center}
    \caption{
The transformation of (marked) tropical curves into (marked) floor diagrams. In blue, we mark disorienting cycles, which correspond to ground floors, in red we mark \'etages. In black, we mark elevators and in orange we mark joints. On the left, we give an example for $\mathbb{T}M_0$: curve $(c')$ in Figure \ref{figure curves TM0}, whereas on the right, we show it on  $\mathbb{T}M_1$, for curve $(c')$ in Figure \ref{figure curves TM1}.
}
    \label{fig:constructing floor diagram from tropical curve}
\end{figure}
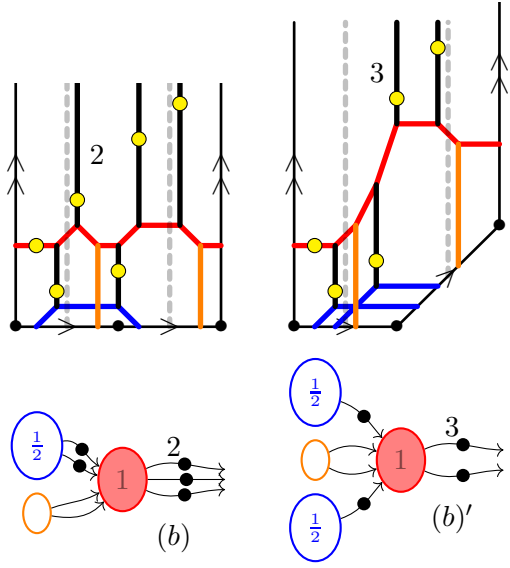
\begin{defi}
A point configuration $\P$ is \textit{stretched} if the difference between the $y$ coordinates of the points is far bigger than $l$, and they are far away from the soul of the M\"obius strip.
\end{defi}

When the point configuration is stretched, the marked floor diagrams  arising as solutions of the enumerative problem satisfy the conditions in Definitions \ref{definition diagram} and \ref{definition marking diagram}.

\begin{prop}
Let $\P$ be a stretched point configuration and $h:\Gamma\to\TT M_\delta$ a tropical curve passing through $\P$. Then, the induced marked diagram $(\Dfk,\mfk)$ satisfies the following:
\begin{enumerate}[label=$(\roman*)$]
\item the floor degrees satisfy $\sum_\F a_\F =a$, and the sum of weights of the ends is equal to $2b$,
\item each  floor satisfies the divergence assumptions in Definition \ref{definition diagram}A and B,
\item each \'etage contains exactly one marked point, and no ground floor contains a marked point,
\item each elevator contains at most one marked point
\item an \'etage consists of a unique cycle with adjacent elevators, and a ground floor consists of a unique disorienting cycle with adjacent elevators,
\item each connected component of the complement of marked elevators is of one of the following types:
	\begin{itemize}[label=$\ast$,noitemsep]
	\item It contains a unique ground floor and no cycles or free ends;
	\item It contains a unique free end and no cycles or ground floors;
	\item It has a unique cycle containing an odd number of joints and no free ends or ground floors.
	\end{itemize}
\end{enumerate}
In particular, $\Dfk$ is a floor diagram and $\P$ induces a marking $\mfk$ of $\Dfk$.
\end{prop}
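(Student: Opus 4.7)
My plan is to verify each of the six conditions in turn. Conditions (i) and (ii) are essentially topological/algebraic consequences of the definitions of class, degree and the balancing condition. Conditions (iii)--(v) follow from a dimension count that uses the stretched hypothesis in an essential way, and condition (vi) is a translation of Proposition \ref{proposition shape of solution}.

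For (i), I would observe that outgoing ends of $\Dfk$ correspond bijectively to vertical ends of $\Gamma$, whose total weight is $\Gamma\cdot E=2b$. By construction, each floor $\F$ has $a_\F=\tfrac12 \F\cdot F$, so the sum over floors equals $\tfrac12 \Gamma\cdot F=a$. For (ii), applying the balancing condition at each vertex of $\Gamma$ belonging to a fixed floor and projecting onto the vertical direction shows that the net vertical flow across the floor, i.e.\ the difference between outgoing and incoming elevator weights, is zero (for an \'etage) or must be compared against the monodromy along the disorienting cycle (for a ground floor). Using the explicit monodromy matrices from Proposition \ref{prop: definition moebius strip}, namely $\bigl(\begin{smallmatrix}1&0\\0&-1\end{smallmatrix}\bigr)$ for $\TT M_0$ and $\bigl(\begin{smallmatrix}1&0\\1&-1\end{smallmatrix}\bigr)$ for $\TT M_1$, one sees the vertical balancing is shifted by $0$ in the first case and by $2a_\G$ in the second, exactly matching Definition \ref{definition diagram}(A).

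For (iii)--(v), I would use the stretched hypothesis. A floor, seen as a planar tropical curve on a horizontal strip, has a $1$-dimensional family of vertical translations, so each \'etage needs one marked point to be rigidified, while a ground floor is already rigidified because its unique disorienting cycle can only sit at a prescribed height (the vertical coordinate is forced since deforming the cycle requires moving an adjacent elevator). Because the marked points are vertically far apart compared to $l$, all elevators of a given floor lie in a thin horizontal band, and no two distinct marked points fit on the same elevator; this yields (iv). For (v), if a floor $\F$ had a non-trivial horizontal subgraph apart from a single cycle plus its adjacent elevators, the number of free horizontal parameters would exceed one, contradicting the dimension of the evaluation map on simple tropical curves computed in Proposition \ref{prop description moduli space}. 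The same count shows each \'etage receives exactly one point and no point can sit on a ground floor, giving (iii).

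Finally, for (vi), I would directly translate Proposition \ref{proposition shape of solution}: a connected component of $\Gamma\setminus h^{-1}(\P)$ contains either a unique end (type (i) in the proposition) or a unique disorienting cycle (type (ii)). On the floor-diagram side, the first case becomes a component containing either a unique infinite outgoing edge or a unique ground floor, depending on whether the end of $\Gamma$ is vertical or not, and the second case corresponds to a cycle in $\Dfk$ that passes through an odd number of joints (the joints being precisely the edges of $\Gamma$ crossing the soul of the M\"obius strip, whose parity detects whether a cycle is disorienting). The main obstacle I anticipate is keeping track of the correspondence between disorienting cycles in $\Gamma$ and odd-joint cycles in $\Dfk$, since a disorienting cycle may either be fully horizontal (contained in a ground floor) or cross several elevators (involving joints), and one must check that these two incarnations are disjoint and exhaust all components of type (ii); this is where the definition of joints as edges meeting the soul is used in full.
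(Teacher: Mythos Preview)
Your overall structure is right, but there are two genuine errors and a few places where your approach diverges from the paper's.

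\textbf{Error in (iv).} Your argument ``the marked points are vertically far apart \dots\ so no two distinct marked points fit on the same elevator'' does not work: elevators are \emph{vertical} edges and can be arbitrarily long, so two points with very different heights could perfectly well lie on the same elevator if they share the same horizontal coordinate. The correct argument (and the paper's) uses horizontal genericity: since $\P$ is generic, no two points have the same projection to $\TT E$, and an elevator has constant projection to $\TT E$, hence contains at most one marked point. The stretched hypothesis is irrelevant here.

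\textbf{Error in (vi).} You write that the ``unique end'' case of Proposition~\ref{proposition shape of solution} yields ``either a unique infinite outgoing edge or a unique ground floor, depending on whether the end of $\Gamma$ is vertical or not''. But every end of a tropical curve in $\TT M_\delta$ is vertical; there are no non-vertical ends. The ground-floor components do \emph{not} arise from type~(i) of Proposition~\ref{proposition shape of solution}; they arise from type~(ii), when the unique disorienting cycle is entirely horizontal and hence contained in a single floor. You actually say this correctly at the very end of your paragraph (``a disorienting cycle may either be fully horizontal \dots\ or cross several elevators''), but your earlier sentence contradicts it. The correct dictionary is: type~(i) in $\Gamma$ $\leftrightarrow$ unique free end in $\Dfk$; type~(ii) with horizontal cycle $\leftrightarrow$ unique ground floor in $\Dfk$; type~(ii) with cycle through elevators $\leftrightarrow$ unique cycle through an odd number of joints in $\Dfk$.

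\textbf{Different routes in (ii), (iii), (v).} For (ii), the paper obtains the ground-floor congruence by lifting to the two-to-one cylinder cover and applying the Menelaus relation there, rather than arguing directly with monodromy matrices; your monodromy sketch is plausible but would need to be made precise. For (iii), the paper's reason that no marked point lies on a ground floor is geometric (ground floors meet the soul, while stretched points are far from it), not a dimension count. For (v), the paper does not use a parameter count but invokes Proposition~\ref{proposition shape of solution} directly: two cycles in an \'etage would both be orienting, so one of them would be unmarked and forbidden; two cycles in a ground floor would force one of them to be orienting. This is cleaner than your ``horizontal parameter'' argument, which as stated is too vague to be a proof.
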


\begin{proof}
\begin{enumerate}[label=$(\roman*)$]
\item This follows from the definition of the degree of the floors and class of the curve.
\item The statement for \'etages follows using the balancing condition in Definition \ref{definition diagram}B. For ground floors, we pass to the two-to-one cover of the M\"obius strip, which is $\TT\E\times\RR$ for $\TT M_0$ and the total space of the $2$-torsion line bundle on $\TT\widetilde{E}$ for $\TT M_1$. As the ground floor contains a disorienting cycle, the preimage of a neighbourhood of the ground floor by the two-to-one cover is connected. Each  elevator adjacent to the ground floor yields a pair of elevators in the cover, whose coordinates on $\TT\widetilde{E}=\RR/2l\ZZ$ differ by $l$ and which are in opposite direction. Let $x_e\in\RR/2l\ZZ$ denote the coordinate of an adjacent elevator $e$. Using the tropical Menelaus relation for the cylinder $\TT\widetilde{E}\times\RR$  proven in \cite{blomme2021floor}, we get that
$$\sum_e w_e x_e -w_e(x_e+l)\equiv l\cdot\delta\cdot 2 a_\G \text{ mod }2l.$$
This yields the desired relation for the divergence of grounds floors.

\item An \'etage has to contain at least one marked point, otherwise it is possible to translate it vertically, resulting in a $1$-parameter family of solutions. It cannot contain more than one point, as the point conditions are stretched whereas slope and length of non-vertical edges are bounded. Further, as a disorienting cycle intersects the soul of the M\"obius strip and the points are chosen very far from it,  a ground floor cannot contain a marked point.

\item As the point configuration is generic, no points have the same projection onto $E$, and an elevator cannot contain more than one marked point.

\item Any cycle contained in an \'etage is orienting. If an \'etage contained two cycles, one of them would be without marked points, contradicting that $\Gamma\backslash h^{-1}(\P)$ contains no orienting cycle (see Proposition \ref{proposition shape of solution}). If a ground floor contained two cycles, it would again contain an orienting cycle. By the balancing condition, a disconnecting edge in the \'etage would have vertical slope, hence there are no disconnecting edges, leading to the statement.

\item The last statement is the direct translation of Proposition \ref{proposition shape of solution}, following from the observation that disorienting cycles lie either in a ground floor, or  can use elevators, as long as they intersect the soul of the M\"obius strip an odd number of times in total.
\end{enumerate}
\end{proof}

\subsection{Floor diagram multiplicities}

Now, we recover the marked tropical curves that are solutions of the enumerative problem from the floor diagrams. Definition \ref{definition diagram multiplicity} below gives the (refined) multiplicity of a floor diagram so that it matches the sum of (refined) multiplicities of the tropical curves it encodes (see Proposition \ref{proposition diagram multiplicity}). We set
\begin{equation}\label{eq sigma one}
\widetilde{\sigma_1}(a)=\sum_{\substack{ k|a \\ k\text{ odd}}}\frac{a}{k}.
\end{equation}

\begin{defi}\label{definition diagram multiplicity}
Let $(\Dfk,\mfk)$ be a marked floor diagram. If $\F$ is an \'etage and $\G$ is a ground floor, we set
\begin{equation}
\begin{array}{>{\displaystyle}l>{\displaystyle}l}
m(\F)=a_\F^{\val\F-1}\sigma_1(a_\F)\prod_{e\ni\F}w_e, & m^q(\F)=\sum_{k|a_f} k^{\val\F-1}\prod_{e\ni\F}\quant{\frac{w_e a_\F}{k}} \\
m(\G)=2\cdot (2a_\G)^{\val\G-1}\widetilde{\sigma_1}(2a_\G)\prod_{e\ni\G}w_e  & m^q(\G)= 2\sum_{\substack{k|2a_\G \\ k \text{ odd}}} k^{\val\G-1}\prod_{e\ni\G}\quant{\frac{w_e\cdot 2 a_\G}{k}} 
\end{array}    
\end{equation}
Further, we write $N(\Dfk)$ for the number of cycles in the complement of marked elevators, and and $\mathrm{Aut}\Dfk$ for the group of automorphisms of the diagram. We define the (refined) multiplicity of a marked floor diagram to be
$$\begin{array}{>{\displaystyle}l}
m(\Dfk,\mfk)=\frac{2^{N(\Dfk)}}{|\mathrm{Aut}\Dfk|}\prod_\F m(\F)\prod_\G m(\G)\prod_{E_\mathrm{um}}w_e, \\
m^q(\Dfk,\mfk)=\frac{2^{N(\Dfk)}}{|\mathrm{Aut}\Dfk|}\prod_\F m^q(\F)\prod_\G m^q(\G)\prod_{E_\mathrm{um}} w_e. 
\end{array}$$
\end{defi}

\begin{rem}
The multiplicity of a tropical curve is a product over its vertices. In the classical floor diagrams \cite{brugalle2007enumeration,brugalle2008floor} as well as their refined version \cite{block2016refined}, the weight of the edges is enough to determine the multiplicity, which is a product over the edges of the diagram. Here, as in \cite{blomme2021floor}, we have to further account for the floors. In the classical case, it is possible to factor out the weight of the edges from the floors, but not in the refined case.
\end{rem}

\begin{prop}\label{proposition diagram multiplicity}
The (refined) multiplicity of a marked floor diagram corresponds to the (refined) count of tropical curves that it encodes, counted with (refined) multiplicity.
\end{prop}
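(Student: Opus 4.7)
The plan is to split the right-hand count into local contributions, one per floor and per elevator, and recognize the appropriate divisor sums. I would fix a marked diagram $(\Dfk,\mfk)$ and parametrize the tropical curves $h:\Gamma\to\TT M_\delta$ realizing it. The stretched point conditions together with the fixed marked ends and marked points pin down the overall vertical positions, so the only remaining freedom is the choice of the horizontal projection of each floor onto $\TT E$ and the shape of the cycle inside it, subject to the induced Menelaus relations of Remark \ref{remark induced menelaus}. Because elevators connect distinct floors (up to joints), this choice is discrete and decouples floor by floor.

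Next, I would factorize the complex multiplicity. By Proposition \ref{prop-computation-mult}, for any realization
$$m_\Gamma^\CC=\frac{2^k}{I^\mu}\prod_V m_V=\frac{2^{N(\Dfk)}}{I^\mu}\prod_\F\prod_{V\in\F}m_V,$$
where the identification $k=N(\Dfk)$ uses the marking conditions of Definition \ref{definition marking diagram}: disorienting cycles in $\Gamma\setminus h^{-1}(\P)$ are in bijection with the cycles of $\Dfk\setminus\mfk$ that either sit inside a ground floor or cross an odd number of joints. Summing over realizations then factorizes into independent sums over the floors, each contributing the vertex determinants along its cycle and the weights of the incident elevators.

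For an \'etage $\F$, the local sum reduces to the count of cycles on the cylinder $\TT E\times\RR$ performed in \cite{blomme2021floor}: the horizontal freedom of the cycle amounts to a choice of an $a_\F$-torsion point on $\TT E$, giving the divisor sum $\sigma_1(a_\F)$; the vertex determinants expand as $a_\F^{\val\F-1}$; and the adjacent elevators contribute $\prod_{e\ni\F}w_e$. For a ground floor $\G$, I would lift to the two-to-one cover $\TT C_\delta\to\TT M_\delta$. Since the disorienting cycle forces the preimage of a neighborhood of $\G$ to stay connected and $\ZZ/2\ZZ$-equivariant under the deck action, only the \emph{odd} divisors $k$ of $2a_\G$ produce invariant cycles, yielding $\widetilde{\sigma_1}(2a_\G)$ in place of $\sigma_1$; a further factor of $2$ appears because the disorienting cycle of $\G$ itself is counted inside the floor rather than among the $N(\Dfk)$ cycles of the complement of marked elevators, matching the prefactor $2$ in $m(\G)$.

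The refined case proceeds with exactly the same bookkeeping, replacing each vertex determinant $m_V$ by its quantum analog $[m_V]_q$. The only non-trivial identity required is
$$\sum_{k\mid a_\F}k^{\val\F-1}\prod_{e\ni\F}\quant{\frac{w_e a_\F}{k}},$$
which is the refinement of $\sigma_1(a_\F)a_\F^{\val\F-1}\prod_{e\ni\F}w_e$ and specializes to it at $q=1$, with the analogous statement for odd divisors in the ground-floor case. Finally, unmarked elevators contribute an extra $w_e$ (from the number of tropical lifts compatible with the periodic identifications), and the automorphism factor $|\mathrm{Aut}\,\Dfk|^{-1}$ corrects the over-counting arising from labeled realizations of an unlabeled diagram. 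The main obstacle I anticipate is the ground-floor computation: one has to carry out the lift to the two-to-one cover carefully, verify that the $\ZZ/2\ZZ$-equivariance picks out only the odd divisors of $2a_\G$, and ensure that exactly one extra factor of $2$ is produced beyond the $2^{N(\Dfk)}$ already accounted for, so that $m(\G)$ is recovered with the correct prefactor.
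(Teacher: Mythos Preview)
Your strategy matches the paper's: classify the possible shapes of each floor by a divisor $k_\F\mid a_\F$ (resp.\ an odd divisor $k_\G\mid 2a_\G$ for ground floors, obtained via the two-to-one cylinder cover), then sum the resulting multiplicities over these divisors. Your identification of the origin of the extra $2$ in $m(\G)$ is also the correct one.

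There is, however, a genuine gap. You assert that ``the choice is discrete and decouples floor by floor'', but this is precisely the nontrivial step. The positions of the elevators are \emph{shared} between adjacent floors, the Menelaus relations at different floors are coupled through these shared coordinates, and the unmarked elevators and joints introduce further identifications. The paper does \emph{not} decouple at this stage: it sets up a single global group homomorphism
\[
\Phi:\ \prod_e \RR/2lk_{e_+}\ZZ \times \RR/2lk_{e_-}\ZZ \ \longrightarrow\ \prod_e \RR/2l\ZZ \times \prod_\F \RR/2lk_\F\ZZ \times \prod_\J \RR/2l\ZZ \times \prod_{e\text{ marked}}\RR/2l\ZZ
\]
between real tori, encoding simultaneously the edge-matching, Menelaus, joint, and marking constraints, and then computes its lattice index by a Laplace expansion analogous to Proposition~\ref{prop-computation-mult}. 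Only after this computation does the answer factor into the per-floor contributions $k_\F^{\val\F-1}$, the unmarked-elevator weights $\prod_{E_{\mathrm{um}}} w_e$, and the global factor $2^{N(\Dfk)}$ coming from the diagram cycles with an odd number of joints. Your proposal skips this global count and its pruning argument; without it you cannot justify the appearance of $\prod_{E_{\mathrm{um}}} w_e$ or of $2^{N(\Dfk)}$ as cleanly as you claim.

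A smaller point: your displayed identity $k=N(\Dfk)$ is misstated. In fact $k=N(\Dfk)+\#\{\text{ground floors}\}$, since each ground floor contributes a disorienting cycle in $\Gamma\setminus h^{-1}(\P)$ that is invisible in the diagram. You effectively acknowledge this two paragraphs later when you absorb the extra $2$ into $m(\G)$, but the earlier formula should be corrected so that the bookkeeping is consistent.
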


\begin{proof}
We recover tropical curves from floor diagrams proceeding as follows. First, we determine the shape of the floors.

\begin{itemize}[label=$\ast$,noitemsep]
\item Given an \'etage $\F$ of degree $a_\F$, the situation is handled as for floors on cylinders \cite{blomme2021floor}. An \'etage consists of a unique cycle realizing an even homology class  $2k_\F\in H_1(\TT M_\delta)\simeq\ZZ$ in the M\"obius strip. The horizontal coordinate $v_\F$ of the slope of the edges in the cycle is well-defined. In particular, an elevator of weight $w_e$ that meets the cycle does so at a vertex with multiplicity $w_ev_\F$. Moreover, intersecting with a fiber yields $2k_\F$ intersection points each of multiplicity $v_\F$. As the intersection index is by definition $2a_\F$, we have $a_\F=k_\F v_\F$, hence $k_\F|a_\F$.

Given $k_\F|a_\F$, we can unfold the \'etage so that the cycles goes  around the M\"obius strip only twice. This induces a floor if and only if the position of the adjacent elevators satisfy the \textit{Menelaus relation} \cite{blomme2021floor} in $\RR/2lk_\F\ZZ$: if $x_e$ is the position of the elevator $e$, we require
	$$\sum_{e\ni\F}\pm w_e x_e\equiv \delta v_\F l \in \RR/2lk_\F\ZZ.$$

\item Given a ground floor $\G$ of degree $a_\G$, there also is a unique cycle that needs to be disorienting, i.e. it realizes an odd homology class in the M\"obius strip. Let $k_\G\in H_1(\TT M_\delta)\simeq\ZZ$ be this odd homology class. The horizontal coordinate $v_\G$ of the slope of the edges in the cycle is also well-defined. Intersecting with a fiber yields $k_\G$ intersection points each of multiplicity $v_\G$.  By the definition of the degree, we obtain $k_\G v_\G=2a_\G$. Therefore, $k_\G$ is an odd divisor of $2a_\G$.

Thus, we can unfold the ground floor by the cover of degree $k_\G$, so that it goes around the M\"obius strip only once. Hence, we can assume that $k_\G=1$ up to choosing a lift of the elevators by this cover. Then, we can take the preimage by the two-to-one cover of the M\"obius strip by the cylinder. Each adjacent elevator gets two lifts with opposite directions whose horizontal position differs by $l$. The Menelaus condition on the cylinder is
$$\sum_{e\ni\G}w_e (x_e - (x_e-l))=\delta l \in\RR/2l\ZZ.$$
This relation is automatically satisfied by the balancing condition, so that we have a unique curve up to translation. If require the curve to be invariant by the deck transformation of the cover, it is fully unique. In the end, we can draw a unique curve for each choice of lift of the elevators.
\end{itemize}
Now, we recover the positions of the elevators for each floor and fixed $k_\F$ as the lattice index of the following map. For each elevator $e$, let $e_+$ and $e_-$ be its extremities, which are floors, joints or points on the boundary of the strip. If $e_-$ is a joint, we set $k_{e_-}=1$ and if $e_+$ is a boundary point, we set $k_{e_+}=1$. We consider the space of positions of the elevators in the unfolded version$$\prod_{e} \RR/2lk_{e_+}\ZZ \times \RR/2lk_{e_-}\ZZ.$$
An element $(x_{e_+},x_{e_-})_e$ can only correspond to a tropical curve if the following are satisfied:
	\begin{itemize}[label=$\bullet$,noitemsep]
	\item For each edge $e$, $x_{e_+}\equiv x_{e_-}$ in $\RR/2l\ZZ$, so both extremities can linked by an elevator.
	\item For each joint, $x_{e_+}$ and $x_{e_-}$ differ by $l\in\RR/2l\ZZ$.
	\item For each \'etage and two adjacent elevators $e$ and $e'$, $(x_{e_+},x_{e_-})_e$ and $(x_{e'_+},x_{e'_-})_{e'}$ satisfy the unfolded Menelaus relation in $\RR/2l k_\F\ZZ$.
	\item Marked points and fixed ends fix the position of elevators.
	\end{itemize}
	Finally, we have the map of real tori
	$$\begin{array}{r>{\displaystyle}cc>{\displaystyle}l}
	\Phi: & \prod_{e} \RR/2lk_{e_+}\ZZ \times \RR/2lk_{e_-}\ZZ
	 & \longrightarrow & \prod_e \RR/2l\ZZ \times \prod_\F \RR/2l k_\F\ZZ\times\prod_\J \RR/2l\ZZ \times \prod_{e\text{ marked}}\RR/2l\ZZ. \\
	 & (x_{e_+},x_{e_-}) & \longmapsto & \left( (x_{e_+}-x_{e_-}),(\sum \pm w_e x_{e_\pm}),(x_{e(\J)_-} -x_{e'(\J)_-}),(x_{e(i)_+}) \right) \\
	\end{array}$$
	This is a group homomorphism between real tori of the same dimension. We now the number of preimages of an element $\left( (0),(\delta v_\F l),(l),(x_i)\right)$ by computing the lattice index of the map between the first homology groups of the tori. The lattice index of $\Phi_*$ can be computed similarly as the computation from the multiplicity in Proposition \ref{prop-computation-mult}. We prune the diagram using Lalace expansion formula for determinants. In the end, we get
$$2^{N(\Dfk)}\prod_\F k_\F^{\val\F-1}\prod_{e\text{ unmarked}}w_e,$$
    where $N(\Dfk)$ denotes the number of cycles in the complement of marked points in $\Dfk$. To conclude, we multiply the lattice index of $\Phi_*$ by the multiplicity of a curve encoded by the diagram, and make the sum over the possible divisors $k_\F|a_\F$ and $k_\G|2a_\G$, yielding the result.

\end{proof}

\begin{expl}
    We conclude by applying the floor diagram algorithm in the genus one case. In genus one, every diagram has a unique floor, either a ground floor or an \'etage.
    
    In the case of a unique ground floor, every elevator is adjacent to it. The contribution is $(2a)^{2b}\widetilde{\sigma}_1(2a)$ respectively.

    In the case of a unique \'etage, every elevator is still adjacent to the \'etage, but might be passing through a joint. By balancing, there are as many elevators directly adjacent to the \'etage as elevators passing through a joint before going to the \'etage. Thus, $b$ needs to be even. We count the number of markings by considering the lift under the 2-to-1 cover of the half-line to get a floor diagram in a cylinder, also with a unique floor. The (even) number of ends $2b$ coincides with the number of marked points. There are $2^{2b-1}$ lifts of the points in the cylinder up to the deck transformation. In the lift there are precisely $2$ markings, depending on where the free end lie, as given in \cite{blomme2021floor}. The multiplicity is thus $2^{2b-1}\cdot 2\cdot a^{2b}\sigma_1(a)$.

    Summing over all contributions, we get
    $N^\delta_{1,aE+bF}=(2a)^{2b}\left(\widetilde{\sigma}_1(2a)+[a,b\in\ZZ]\sigma_1(a)\right),$
    where $[a,b\in\ZZ]$ is $1$ if $a$ and $b$ are integers, and $0$ else.
\end{expl}

\section{Regularity of invariants}

\subsection{Quasi-polynomiality of relative invariants}
\label{sec polynomiality}

	In this section we study the regularity of the relative invariants, fixing the number of intersection points but varying their tangency orders. This was previously done in \cite{ardila2017double} for the relative invariants of Hirzebruch surfaces and in \cite{blomme2021floor} for the case of line bundles over an elliptic curve. Other results on polynomiality have recently been obtained in \cite{corey2022counting}.
	
	\subsubsection{General statement.} We study the function
	$$\Phi_a^\delta:(\mu_1,\dots,\mu_n,\nu_1,\dots,\nu_m)\longmapsto N^\delta_{g,aE+bF}(\mu_1,\dots,\mu_n,\nu_1,\dots,\nu_m),$$
	defined for $\delta=0,1$, and $a\in\frac{1}{2}\NN$. To get a non-zero result, we must have $\sum\mu_j+\sum\nu_j=2b$. Hence, $b$ is chosen accordingly. In particular,
	$$\sum\mu_j+\sum\nu_j\equiv 2\delta a \text{ mod }2.$$
Therefore, we consider the function $\Phi_a^\delta$ to be defined only on tuples of integers satisfying the above conditions. The study of regularity relies on the existence of floor diagrams, counting each with a polynomial contribution. Different to \cite{ardila2017double} and \cite{blomme2021floor}, diagrams with ground floors and joints often only have \textit{quasi-polynomial} (but not polynomial) contributions.

\begin{theo}\label{theorem-quasi-polynomiality}
There exist piecewise quasi-polynomial functions $P^\delta_a$ in $n+m$ variables such that $\Phi_a^\delta(\mu,\nu)=P_a^\delta(\mu,\nu)$.
\end{theo}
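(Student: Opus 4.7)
The plan is to follow the floor-diagrammatic approach of \cite{ardila2017double, blomme2021floor}, which establishes piecewise polynomiality in the Hirzebruch-surface and cylinder settings, and to observe that in the situation of Section \ref{sec-floor-diag-algo} the ground-floor parity condition of Definition \ref{definition diagram}(A) forces the local expressions to become quasi-polynomials rather than polynomials.

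By Theorem \ref{theorem-corresp} and Proposition \ref{proposition diagram multiplicity},
$$\Phi_a^\delta(\mu,\nu)=\sum_{(\Dfk,\mfk)}m(\Dfk,\mfk),$$
where the sum is taken over isomorphism classes of marked floor diagrams of genus $g$, class $aE+bF$, and tangency profile $\mu+\nu$. First I would organize the terms by \emph{combinatorial type} $\tau$: the decorated underlying graph of $\Dfk$ (recording the partition of vertices into ground floors, \'etages, and joints, and which edges are ends), the floor degrees $a_\F$ and $a_\G$, the marking $\mfk$, and the assignment of the $n+m$ ends to the labels $\mu_1,\dots,\mu_n,\nu_1,\dots,\nu_m$. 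Since $g$, $a$, $n=|\mu|$ and $m=|\nu|$ are fixed, only finitely many types $\tau$ arise.

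For a fixed type $\tau$, the weights of the ends are the corresponding entries of $(\mu,\nu)$, and the weights $w_e$ of the internal elevators satisfy the linear \'etage divergence conditions of Definition \ref{definition diagram}(B), the joint equalities of (C), and the mod-$2$ ground-floor parity conditions of (A), together with positivity constraints. Together these describe the integer points of a rational polytope whose affine inequalities depend linearly on $(\mu,\nu)$, restricted to a coset of a finite-index sublattice $\Lambda_\tau$ whose quotient is $2$-torsion. Any remaining freedom in the $w_e$, arising from cycles of the underlying graph, contributes extra Ehrhart parameters. On this lattice-point set, the multiplicity
$$m(\Dfk,\mfk)=\frac{2^{N(\Dfk)}}{|\mathrm{Aut}\,\Dfk|}\prod_\F m(\F)\prod_\G m(\G)\prod_{E_\mathrm{um}}w_e$$
is a polynomial in the $w_e$: the scalar factors $a_\F^{\val\F-1}\sigma_1(a_\F)$ and $(2a_\G)^{\val\G-1}\widetilde{\sigma_1}(2a_\G)$ are constants determined by $\tau$, while the remaining factors are products of the linear forms $w_e$. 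Summing this polynomial over the internal Ehrhart parameters gives, by the parametric Ehrhart theorem, a function that, on each chamber cut out by the choices of strict positivity, is the restriction of a polynomial in $(\mu,\nu)$ to a coset of $\Lambda_\tau$. Taking the common refinement of the chambers over the finitely many $\tau$ and the common sublattice $\Lambda=\bigcap_\tau \Lambda_\tau$ — still of finite index in $\ZZ^{n+m}$ with $2$-torsion quotient — yields the desired piecewise quasi-polynomial description of $\Phi_a^\delta$.

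The main obstacle, absent from \cite{ardila2017double, blomme2021floor}, is the ground-floor parity condition: it is a genuine, non-trivial mod-$2$ congruence that prevents $\Phi_a^\delta$ from being piecewise polynomial in general, as Example \ref{expl-diag-with-non-poly-mult} will illustrate. The explicit $2$-torsion structure of $\Lambda$ produced by the argument above is precisely what underlies the reduction to piecewise polynomiality in the special cases addressed by Corollaries \ref{coro-one-end} and \ref{coro-genus12}.
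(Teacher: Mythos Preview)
Your approach is correct and follows the same overall strategy as the paper: reduce to a finite sum over marked floor-diagram types, observe that for each type the multiplicity is a monomial in the internal elevator weights, and invoke a lattice-point counting result to conclude piecewise quasi-polynomiality in $(\mu,\nu)$.

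The only notable technical difference is in how the ground-floor parity condition is packaged. You keep it as a genuine mod-$2$ congruence and appeal to a parametric Ehrhart theorem on a coset of a finite-index sublattice. The paper instead linearizes the parity: it augments the graph $G_\Dfk$ by attaching to each ground floor $\G$ a new unbounded edge $e_\G$ whose coefficient in the adjacency matrix is $2$, so that the congruence $\sum_{e\ni\G} w_e\equiv 2\delta a_\G\pmod 2$ becomes the integer equation $2w_{e_\G}+\varepsilon_\G=\sum_{e\ni\G}w_e$. This turns the whole problem into counting nonnegative integer solutions of $A\mathbf{w}=\mathbf{d}$ with a polynomial weight, to which \cite[Theorem 1]{sturmfels1995vector} applies directly. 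The payoff of the paper's encoding is that the period lattice is read off explicitly from the principal minors of $A$ (Lemma \ref{lem-computation-minor-adjacency-matrix}), which is exactly what drives Corollaries \ref{coro-one-end} and \ref{coro-genus12}. Your formulation reaches the same conclusion and the same $2$-torsion structure, but the paper's trick makes the subsequent minor computation cleaner.
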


\begin{proof}
The proof relies on \cite[Theorem 1]{sturmfels1995vector}. We proceed as in \cite{ardila2017double}. The relative invariant can be written as a sum over the floor diagrams. As the curves are of fixed genus and have a fixed number of ends, up to the weighting of the elevators, there is a finite number of floor diagrams.

Let $\Dfk$ be a marked floor diagram. We label the ends of $\Dfk$ by the coordinates $(\mu,\nu)$. As in \cite{ardila2017double}, we  account for the symmetry in the partition $\nu$ by averaging over its automorphism group. Our goal is to find the possible weightings of the internal edges of $\Dfk$. To incorporate the parity conditions on the edges adjacent to ground floors, we modify $\Dfk$ to get a graph $G_\Dfk$ by adding a vertex $v_e$ adjacent to each end $e$ and an unbounded edge $e_\G$ adjacent to each ground floor $\G$.

Let $E$ be the set of edges of $G_\Dfk$ and $V$ its set of vertices. The graph $G_\Dfk$ inherits an orientation from $\Dfk$. Now, a \emph{weighting} of $\Dfk$ is a vector in $\NN^E$ satisfying the balancing condition at each floor and the equality of weights of edges adjacent to the same joint. Let $\varepsilon_\G$ be $2\delta a_\G$ modulo $2$. Assume  that the balancing condition for ground floors is satisfied on $G$. We obtain a compatible weight $w_{e_\G}$ for the new edge $e_\G$ by solving
	$$2w_{e_\G}+\varepsilon_\G=\sum_{e\ni\G \text{ in }\Dfk} w_e.$$
    Let $A$ be the adjacency matrix of the oriented graph $G_\Dfk$. The coefficient of $A$ for $e_\G\ni\G$ is $2$ by balancing. Let $\mathbf{d}\in\ZZ^V$ be the integer vector whose coordinates are given by
	$$d_v=\left\{ \begin{array}{l}
	0 \text{ if }v\text{ is an \'etage}, \\
	\varepsilon_\G  \text{ if }v\text{ is a ground floor}, \\
	\mu_i \text{ or }\nu_i \text{ if }v=v_e\text{ for some end }e.
	\end{array}	  \right.$$
	The multiplicity of a floor diagram is a monomial in the coordinates of the weight vector $\mathbf{w}$. Thus, we count the weight vectors $\mathbf{w}\geqslant 0$ satisfying $A\mathbf{w}=\mathbf{d}$ with corresponding multiplicity.
    
    If there are no ground floors and every cycle passes through an even number of joints, we can lift the floor diagram to a floor diagram on the cylinder. This corresponds to lifting the tropical curves encoded by the diagram to tropical curves in the two-to-one cylinder cover of the M\"obius strip. On the cylinder, the sum of top weights is equal to the sum of bottom weights, thus we can find a weighting only if this balancing condition is satisfied, and polynomiality is due to \cite{blomme2021floor}.
	
    If there is at least one ground floor or a cycle passing through an odd number of joints, the map is surjective over $\QQ$, i.e. the image of $A$ is of full-dimension.
    
	By \cite{sturmfels1995vector}, the function is a piecewise quasi-polynomial on the chamber complex of the matrix $A$. On each chamber, the function is polynomial on vectors with fixed residue modulo all non-zero principal minors of $A$. Lemma \ref{lem-computation-minor-adjacency-matrix} gives a more complete description of the minors.
\end{proof}

\begin{rem}
 The statement in \cite{sturmfels1995vector} concerns the count without polynomial multiplicity. The extended statement can be obtained by induction on the degree by considering the graph where some of the edges have been doubled. For details, see the proof of Theorem 4.2 in \cite{ardila2017double}.
\end{rem}

\begin{center}
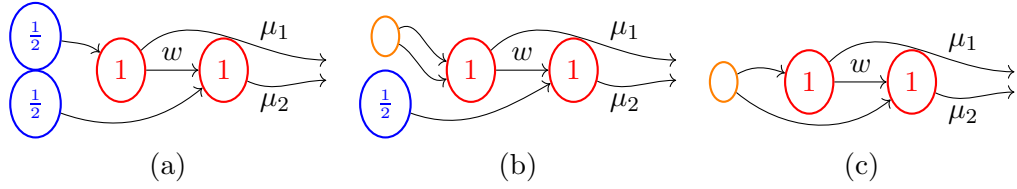
\begin{figure}
\begin{center}
\begin{tabular}{c c c}
    \begin{tikzpicture}[scale = 0.9]
\usetikzlibrary{shapes}
    \node[thick, draw=blue, ellipse,
    align=center,blue,minimum height=24pt, minimum width = 12pt] at  (0.75,-0.5)(1){\footnotesize $\frac{1}{2}$};
    \node[thick, draw=blue, ellipse,
    align=center,blue,minimum height=24pt, minimum width = 12pt] at  (0.75,0.5)(0){\footnotesize $\frac{1}{2}$};
    \node[thick, draw=red, ellipse,
    align=center,minimum height=24pt, minimum width = 12pt,red] at  (2,0)(2){$1$};
    \node[thick, draw=red, ellipse,
    align=center,minimum height=24pt, minimum width = 12pt,red] at  (3.5,0)(3){$1$};
    \draw[->]  (0) to [out =-10, in=140] (2);
    \draw[->]  (1) to [out =-20, in=-140] (3);
    \draw[->]  (2) to (3);
    \draw [->] (2) to [out=45,in=180](5,0.15);
    \draw [->] (3) to[out=-25,in=180] (5,-0.15);
    \node [above] at (4.25,0.3){$\mu_1$};
    \node [above] at (2.75,0){$w$};
    \node [below] at (4.25,-0.2){$\mu_2$};
    \end{tikzpicture} &     \begin{tikzpicture}[scale = 0.9]
\usetikzlibrary{shapes}
    \node[thick, draw=blue, ellipse,
    align=center,blue,minimum height=24pt, minimum width = 12pt] at  (0.75,-0.5)(0){\footnotesize $\frac{1}{2}$};
    \node[thick, draw=red, ellipse,
    align=center,minimum height=24pt, minimum width = 12pt,red] at  (2,0)(2){$1$};
    \node[thick, draw=red, ellipse,
    align=center,minimum height=24pt, minimum width = 12pt,red] at  (3.5,0)(3){$1$};
        \node[thick, draw=orange, ellipse,
    align=center,minimum height=15pt, minimum width = 10pt] at  (0.75,0.5)(1){};

    \draw[->]  (1) to [out =30, in=160] (2);
    \draw[->]  (1) to [out =-30, in=-160] (2);

    \draw[->]  (0) to [out =-20, in=-150] (3);
    \draw[->]  (2) to (3);
    \draw [->] (2) to [out=45,in=180](5,0.15);
    \draw [->] (3) to[out=-25,in=180] (5,-0.15);
    \node [above] at (4.25,0.3){$\mu_1$};
    \node [above] at (2.75,0){$w$};
    \node [below] at (4.25,-0.2){$\mu_2$};
    \end{tikzpicture} &     \begin{tikzpicture}[scale = 0.9]
\usetikzlibrary{shapes}
    \node[thick, draw=red, ellipse,
    align=center,minimum height=24pt, minimum width = 12pt,red] at  (2,0)(2){$1$};
    \node[thick, draw=red, ellipse,
    align=center,minimum height=24pt, minimum width = 12pt,red] at  (3.5,0)(3){$1$};
        \node[thick, draw=orange, ellipse,
    align=center,minimum height=15pt, minimum width = 10pt] at  (0.75,0)(1){};

    \draw[->]  (1) to [out =30, in=160] (2);
    \draw[->]  (1) to [out =-40, in=-140] (3);
    \draw[->]  (2) to (3);
    \draw [->] (2) to [out=45,in=180](5,0.15);
    \draw [->] (3) to[out=-25,in=180] (5,-0.15);
    \node [above] at (4.25,0.3){$\mu_1$};
    \node [above] at (2.75,0){$w$};
    \node [below] at (4.25,-0.2){$\mu_2$};
    \end{tikzpicture}\\
    (a) & (b) & (c) \\    
    \end{tabular}
    \end{center}
    \caption{Floor diagrams with a non-polynomial contribution.}
    \label{fig-floor-diag-nonpoly-mult}
\end{figure}
\end{center}

\begin{expl}\label{expl-diag-with-non-poly-mult}
In the following, we give some examples of floor diagrams with a non-polynomial contribution. Their graphs are depicted in Figure \ref{fig-floor-diag-nonpoly-mult}.
	\begin{enumerate}[label=(\alph*),noitemsep]
	\item[\ref{fig-floor-diag-nonpoly-mult}(a)]  Here, for given $\mu_1$ and $\mu_2$, complete the diagram by choosing the weights of the remaining edges. The choice of  the interior edge $w$ completely determines the weight of the edges adjacent to the ground floors. However, we have a parity condition on these weights: even (resp. odd) if the diagram encodes curves in $\TT M_0$ (resp. $\TT M_1$). Thus, $w$ needs to have the same (resp. opposite) parity as $\mu_1$. Recall that the parity of $\mu_1+\mu_2$ is fixed.
	
	\item[\ref{fig-floor-diag-nonpoly-mult}(b)] For the second diagram, we have a similar parity obstruction: since a joint is adjacent to the first \'etage, we need to have $w\equiv \mu_1 \text{ mod }2$.
	
	\item[\ref{fig-floor-diag-nonpoly-mult}(c)] For the last diagram, we can uniquely solve for the weights of the edges.\qedhere
	\end{enumerate}	 
\end{expl}
\subsubsection{Description of the minors of the adjacency matrix.} Given a marked floor diagram $\Dfk$ and an adjacency matrix $A$ of $G_\Dfk$, let $A_\Delta$ be a principal minor of $A$ corresponding to a subset $\Delta\subset E$ of edges and let $G_\Delta$ be the subgraph of $G_\Dfk$ containing all the vertices and the edges of $\Delta$. The quotient $\ZZ^V/\gen{A_\Delta}$ is computed by the following lemma.

\begin{lem}\label{lem-computation-minor-adjacency-matrix}
In the above notation,
	\begin{enumerate}[label=$(\roman*)$]
	\item If the determinant of the minor is non-zero, then $G_\Delta$ is a disjoint union of connected subgraphs which are either trees containing a unique $e_\G$, or which contain a unique cycle and no edge $e_\G$.
	\item The group $\ZZ^V/\gen{A_\Delta}$ is a sum over the connected components of the subgraph $G_\Delta$, where the group associated to a component is $\ZZ/2\ZZ$ if the component is a tree containing some $e_\G$, or a cycle passing through an odd number of joints, and $0$ else (i.e. a cycle passing through an even number of joints).
	\end{enumerate}
\end{lem}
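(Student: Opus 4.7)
The plan is to exploit the block structure of $A_\Delta$. Since any edge of $G_\Delta$ has both its endpoints (or its unique endpoint, for a half-edge $e_\G$) in the same connected component, rows and columns of $A_\Delta$ split cleanly among the components, so after reordering $A_\Delta$ is block-diagonal with one block of size $v_C\times(b_C+h_C)$ per component $C$, where $v_C$, $b_C$ and $h_C$ count respectively the vertices, bounded edges, and half-edges of $C$ in $\Delta$. For the total $|V|\times|V|$ matrix to be non-singular each block must be square and non-singular: squareness forces $b_C+h_C=v_C$, and since $C$ is connected $b_C\geqslant v_C-1$, so $h_C\in\{0,1\}$. The case $h_C=1$ gives a tree with a unique half-edge $e_\G$, while $h_C=0$ gives a unicyclic component with no half-edge. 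This establishes part (i).

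For part (ii), I would compute the block cokernel component by component. For a tree containing an $e_\G$, the approach is to prune leaves iteratively: a leaf of the tree in $\Delta$ has a single incident column in the block, and its row contains only a $\pm1$ entry, coming either from an end-vertex evaluation or from a degree-one interior vertex whose other incidences in $\Dfk$ lie outside $\Delta$. Cofactor expansion along such a row strips off one leaf without changing the Smith normal form up to sign, and iterating down to the pair $(\G,e_\G)$ leaves a $1\times1$ block with value $\pm2$, the coefficient of $w_{e_\G}$ in the ground-floor row coming from the relation $2w_{e_\G}+\varepsilon_\G=\sum w_e$. Hence the block determinant is $\pm2$ and the cokernel is $\ZZ/2\ZZ$.

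For a unicyclic component without any $e_\G$, I would first strip off every branch hanging from the cycle by the same leaf-pruning argument, reducing to a pure $p$-cycle of size $p\times p$. Each row then has exactly two non-zero entries in the columns of the two incident cycle edges, so the permutation expansion retains only the identity and the $p$-cycle, giving $\det=\prod_i b_i+(-1)^{p+1}\prod_i a_i$ where $(a_i,b_i)$ are the two entries of row $i$. Letting $\sigma_i\in\{\pm1\}$ indicate whether the ambient orientation of $e_i$ agrees with the chosen cyclic orientation, at an \'etage the entries are $(-\sigma_{i-1},\sigma_i)$, while at a joint both incident cycle edges must be outgoing, so $\sigma_{i-1}=-1$ and $\sigma_i=+1$, and the constraint $w_{e_{i-1}}=w_{e_i}$ produces entries $(+1,-1)$. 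Substituting yields $\det=\bigl(\prod\sigma_i\bigr)\bigl((-1)^k-1\bigr)$, where $k$ is the number of joints on the cycle. The determinant is therefore $\pm2$ when $k$ is odd (cokernel $\ZZ/2\ZZ$) and $0$ when $k$ is even, in which case the block is singular and contributes a free summand rather than a $2$-torsion one.

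The main technical obstacle will be the sign bookkeeping at joints: I need to verify that precisely at each joint the two adjacent cycle edges are both oriented away from it, so that the joint row differs from an \'etage row exactly by the sign flip $\sigma_i\mapsto-\sigma_i$ on one of the two entries. This asymmetry is the single mechanism making the parity of $k$ control the vanishing of the determinant, and once it is pinned down the rest of the argument is a direct computation.
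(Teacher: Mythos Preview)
Your proposal is correct and follows essentially the same approach as the paper: block-decompose by connected components, use an Euler-characteristic count for part~(i), and prune leaves via Laplace expansion to reduce part~(ii) to either the single entry $\pm 2$ at $(\G,e_\G)$ or a bare cycle whose determinant you compute explicitly. Your cycle-determinant formula is more detailed than the paper's (which just asserts the two-term expansion gives $0$ or $2$), and your remark that the even-joint case yields a singular block---hence a free summand rather than the trivial group---correctly flags a small imprecision in the lemma's phrasing; that case is simply excluded by the nonzero-determinant hypothesis carried over from part~(i), which is how the lemma is actually used.
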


\begin{proof}
Choosing a principal minor amounts to choosing a subset $\Delta\subset E$ of size $|V|$. If the principal minor is non-zero, $\Delta$ has a bijection ot $V$ that assigns each vertex one to of its adjacent edges in $\Delta$. The expansion of the determinant is now a signed weighted sum over these bijections. As the number of vertices and edges in  $G_\Delta$ is the same, $G_\Delta$ has Euler characteristic $0$, and the same holds true for each of its connected components. Thus, for each connected component there are two possibilities: Either it is a tree rooted at some edge corresponding to a ground floor or it contains a unique cycle.
 
	This proves $(i)$. To show $(ii)$, note that the minor $A_\Delta$ splits as a block-diagonal matrix corresponding to its connected components. We claim that the determinant of the block corresponding to a connected component is either $1$ or $2$, so that the quotient is a sum of copies of $\ZZ/2\ZZ$. We can compute each determinant by pruning the branches of the components, which amounts to Laplace expansion with respect to the row corresponding to the vertex we prune. The coefficient of the vertex being $\pm 1$, we end up with one of these two cases:
	\begin{itemize}[label=$\ast$]
	\item For a component without cycle and rooted at an end $e_\G$, the determinant is $2$ since the coefficient of the latter in $A$ is $2$.
	\item If the component has a unique cycle, pruning the branches, we are left with the cycle. When expanding the determinant, we have exactly two terms according to the choice of vertex-edge assignment. The value of this determinant is $0$ if the cycle passes through an even number of joints and $2$ if this number is odd.
	\end{itemize}
\end{proof}

\subsubsection{Piecewise polynomiality in some situations.}  We now use this fact to prove polynomiality in several special cases. First, we consider curves that have a unique tangency point with maximal order on the boundary. Afterwards, we show the analogue for curves of small genus.

\begin{coro}\label{coro-one-end}
The relative invariants having a unique intersection point with the boundary are piecewise polynomial.
\end{coro}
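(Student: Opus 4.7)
The plan is to deduce the corollary directly from Theorem \ref{theorem-quasi-polynomiality}, by observing that in the one-tangency-point regime the residue conditions responsible for the quasi-polynomial behavior are all pinned down by the single parity constraint that defines the domain. First I would denote the unique entry of $\mu$ or $\nu$ by $x$, so that $x=\|\mu\|+\|\nu\|=2b$, and note that the class condition $2b\equiv 2a\delta\pmod{2}$ forces the parity of $x$ to be constant on the entire domain of the function.

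Next I would revisit the proof of Theorem \ref{theorem-quasi-polynomiality}. For each contributing floor diagram $\Dfk$, the weightings are counted by Sturmfels' theorem \cite{sturmfels1995vector} applied to the system $A\mathbf{w}=\mathbf{d}$ on $G_\Dfk$. The coordinates of $\mathbf{d}$ are $\varepsilon_\G$ at ground floors, $0$ at \'etages and joints, and exactly one coordinate carrying $x$ at the end-vertex; hence $\mathbf{d}\bmod 2$ depends only on the fixed data of $\Dfk$ together with $x\bmod 2$. Combined with Lemma \ref{lem-computation-minor-adjacency-matrix}, which shows that every non-zero principal minor of $A$ yields a quotient $\ZZ^V/\langle A_\Delta\rangle$ which is a direct sum of $\ZZ/2\ZZ$'s, this means that on each chamber of the chamber complex the quasi-polynomial becomes polynomial once $\mathbf{d}\bmod 2$ is fixed. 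Since the parity of $x$ is uniformly fixed on the domain, the quasi part collapses, the function becomes polynomial on each chamber, and after summing over the finitely many floor diagrams we obtain piecewise polynomiality on a common refinement of the chamber complexes.

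The main obstacle is essentially bookkeeping: within each chamber one must check that the weightings $\mathbf{w}$ realized by non-negative integer solutions of $A\mathbf{w}=\mathbf{d}$ depend affine-linearly on $x$ (up to the kernel of $A$), and that the monomial multiplicity factor from Definition \ref{definition diagram multiplicity} --- a product of weights together with the constants $\sigma_1(a_\F)$, $\widetilde{\sigma_1}(2a_\G)$, $2^{N(\Dfk)}$, and $|\mathrm{Aut}\,\Dfk|^{-1}$, all of which depend only on the fixed diagram data --- stays polynomial in $x$ once the residue class has been selected. Both points follow from the structure of the chamber complex exhibited in the proof of Theorem \ref{theorem-quasi-polynomiality} without requiring any new combinatorial input.
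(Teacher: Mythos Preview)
Your argument is correct and follows essentially the same route as the paper's own proof: reduce to Theorem~\ref{theorem-quasi-polynomiality}, invoke Lemma~\ref{lem-computation-minor-adjacency-matrix} to see that all relevant residues are taken mod~$2$, and then observe that the single variable $x=2b$ has its parity fixed by the class condition $2b\equiv 2a\delta\pmod 2$, so the quasi-polynomial collapses to a polynomial on each chamber. Your final paragraph on bookkeeping is slightly more explicit than the paper about why the multiplicity factor remains polynomial, but no new ideas are involved.
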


\begin{proof}
There is a unique variable $\mu=2b\in\NN$. By Theorem \ref{theorem-quasi-polynomiality}, for each floor diagram, the contribution is piecewise quasi-polynomial, and the quasi-polynomiality is determined by the residue of the divergence vector $\mathbf{d}$ modulo the principal minors of the adjacency matrix $A$. To conclude, we only need to show that these residues are constant. Let us consider a principal minor $A_\Delta$ corresponding to a subgraph $G_\Delta$. According to Lemma \ref{lem-computation-minor-adjacency-matrix}, the cokernel is a sum of copies of $\ZZ/2\ZZ$ corresponding to the components with either a cycle with an odd number of joints, or an end $e_\G$ at a ground floor. Thus, only the residue mod $2$ of the divergence vector $\mathbf{d}$ matter. All of its coordinates are constant except $\mu$, which is either $0$ or $\varepsilon_\G$. As $\mu=2b\equiv 2\delta a$ mod $2$, its residue mod $2$ is fixed. The function is thus a true polynomial.
\end{proof}

\begin{coro}\label{coro-genus12}
The relative invariants of genus $1$ and $2$ are piecewise polynomials.
\end{coro}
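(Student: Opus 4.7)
The plan is to upgrade the conclusion of Theorem \ref{theorem-quasi-polynomiality} from piecewise quasi-polynomial to piecewise polynomial by checking that, when $g\in\{1,2\}$, the residues of the divergence vector $\mathbf{d}$ modulo the $\ZZ/2\ZZ$-summands in $\ZZ^V/\mathrm{im}(A_\Delta)$ (in the notation of Lemma \ref{lem-computation-minor-adjacency-matrix}) depend only on the fixed parameters $(a,\delta,|\mu|,|\nu|)$ and not on individual entries of $\mu,\nu$. This is the same strategy as in Corollary \ref{coro-one-end}, but now the residues must be tracked across several diagram types simultaneously.

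I will first enumerate the combinatorial types of floor diagrams of genus $1$ and $2$. Using $g=\mathrm{graph\ genus}(\Dfk)+\#\mathrm{floors}(\Dfk)$ together with the rules that ground floors and joints have no incoming edges while \'etages are divergence-free (so they require an incoming source), the enumeration reduces to the following short list: in genus $1$, either (G1) a single ground floor $\G$ with all $2b$ ends directly attached, or (G2) a single \'etage $\F$ with some ends attached directly and the others through joints each carrying one end; in genus $2$, either (H1) a single ground floor joined to a single \'etage by one elevator, with ends on either, or (H2) a single \'etage carrying a unique joint-cycle, possibly decorated with further one-sided joints carrying ends. Any other candidate is excluded because it would either force an unbalanced \'etage (e.g.\ two \'etages connected by a single elevator and no ground floor) or exceed the genus bound (e.g.\ two joint-cycles).

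Next, I will analyze the principal minors of the adjacency matrix of the modified graph $G_\Dfk$. Each end-vertex $v_e$ has valence $1$, which forces every end to lie in $\Delta$. Combined with the source nature of joints and ground floors and the univalence of the $v_e$, this determines $\Delta$ uniquely up to exchanging the two edges of a joint-cycle, and the resulting subgraph $G_\Delta$ is connected and equals $G_\Dfk$ itself. By Lemma \ref{lem-computation-minor-adjacency-matrix}, the cokernel $\ZZ^V/\mathrm{im}(A_\Delta)$ is therefore either trivial or a single copy of $\ZZ/2\ZZ$, produced by either the tree rooted at the auxiliary edge $e_\G$ (types G1 and H1) or the cycle through a single joint (types G2 and H2).

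Finally, since the unique $\ZZ/2\ZZ$-component contains every $v_e$, the residue of $\mathbf{d}$ on it equals
\[
\bigl(\varepsilon_\G\text{ if }\G\in\Dfk,\ \text{else }0\bigr)+\sum_i\mu_i+\sum_j\nu_j\pmod{2}.
\]
Using $\|\mu\|+\|\nu\|=2b\equiv 2\delta a\pmod 2$ together with $\varepsilon_\G\equiv 2\delta a_\G\pmod 2$ and $a_\F\in\NN$ for every \'etage, the residue reduces in all four cases to a value depending only on $a$ and $\delta$. Hence each diagram contributes a polynomial function of $(\mu,\nu)$, and the finite sum over diagrams is piecewise polynomial. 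The main obstacle is making the enumeration in case (H2) watertight, since adjoining joints that do not create cycles could in principle split $G_\Delta$ into more components; the rigidity forced by each such extra joint having exactly one edge to $\F$ and one edge to a univalent $v_e$ keeps $G_\Delta$ connected, so no additional $\ZZ/2\ZZ$-factors arise and the analysis goes through uniformly.
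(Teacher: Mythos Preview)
Your overall strategy matches the paper's: reduce quasi-polynomiality to polynomiality by showing the residue of $\mathbf{d}$ modulo every non-trivial minor is determined by $(a,\delta)$. However, the casework has two genuine gaps.

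First, your genus $2$ enumeration is incomplete. You list only (H1) one ground floor plus one \'etage and (H2) one \'etage with a joint-cycle, and you dismiss ``two \'etages connected by a single elevator and no ground floor'' as unbalanced. But two \'etages can perfectly well coexist in a tree of graph genus $0$ provided each is fed by a joint (e.g.\ one joint sending its two edges to the two \'etages, or a joint feeding one \'etage which then feeds the second). This is exactly case~(b) in the paper's Figure~\ref{figure floor diagrams genus 2}, and it is not covered by your (H1) or (H2).

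Second, your analysis of (G2) is wrong. A single \'etage with joints each sending one edge to the \'etage and one edge to an end is a \emph{tree}: there is no joint-cycle, so your claimed $\ZZ/2\ZZ$ cokernel does not arise. In fact $G_\Dfk$ then has $|E|=|V|-1$, so there is no square minor to speak of, and Lemma~\ref{lem-computation-minor-adjacency-matrix} does not apply in the form you invoke. The same happens for the missing two-\'etage case. The paper treats these diagrams differently: with no ground floor and no odd-joint cycle, the adjacency matrix is unimodular (equivalently, the diagram lifts to the cylinder cover), and polynomiality follows directly from \cite{ardila2017double,blomme2021floor} rather than from a residue computation. You need to separate out this ``no ground floor, no odd cycle'' branch before running the $\ZZ/2\ZZ$ argument on the remaining diagrams.
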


\begin{proof}
We proceed similarly to prove that that all the residues are fixed. Assume the diagram is of genus $1$. It has at most one floor; As it needs to have at least one, the floor is unique.

Assume the unique floor is an \'etage. Every end is adjacent to it, possibly through some joint. Note that this diagram only contributes if the balancing condition at the \'etage is satisfied, which imposes a condition on the ends. Then the matrix is unimodular, which, by \cite{ardila2017double}, implies that the contribution is polynomial. 

If the unique floor is a ground floor, every end is (directly) adjacent to it. There is a unique minor and the quotient is $\ZZ/2\ZZ$. However, as we have $\sum\mu_i=2b\equiv 2\delta a$ mod $2$, the residue is constant, implying polynomiality.

In case of genus $2$, the possibilities for the floor diagrams are given in Figure \ref{figure floor diagrams genus 2}:
	\begin{itemize}[label=$\ast$]
	\item[\ref{figure floor diagrams genus 2}(a)] If the diagram has a unique floor, it needs to be an \'etage with a joint whose both extremities are adjacent to it. Further, every end is adjacent to the \'etage, after potentially passing through some joint. As there is a cycle passing through an odd number of joints (i.e. $1$), the determinant is $2$. The residue is given by the sum of the entries, which is fixed by $a$ since $2b\equiv 2\delta a\text{ mod }2$. Thus, we get polynomiality.
	\item[\ref{figure floor diagrams genus 2}(b)] Assume the diagram has two floors, both of which are \'etages. The matrix is unimodular, again imlying polynomiality.
	\item[\ref{figure floor diagrams genus 2}(c)] Assume the diagram has a ground floor and an \'etage.  The ground floor is linked to the \'etage by an unique edge. Thus, we have exactly one principal minor with determinant $2$, and the residue is given by the sum of the entries. Thus, it is also fixed and we get polynomiality.
	\end{itemize}
\end{proof}

\begin{figure}
\begin{center}
\begin{tabular}{c c c}
\begin{tikzpicture}[scale = 0.9]
    \node[thick, draw=orange, ellipse,
    align=center,minimum height=15pt, minimum width = 10pt] at  (0,0)(0){};
    \node[thick, draw=red, ellipse,
    align=center,minimum height=24pt, minimum width = 12pt,red] at  (1.5,0)(1){$1$};
    \draw [->](0) to [out=30,in=150] (1);
    \draw [->] (0) to [out=-30,in=-150] (1);
    \draw [->] (1) to [out=30,in=180](3,0.15);
    \draw [->] (1) to[out=-30,in=180] (3,-0.15);

    \node[thick, draw=orange, ellipse,
    align=center,minimum height=15pt, minimum width = 10pt, densely dotted] at  (0,-0.75)(4){};
    \draw[->, densely dotted] (4) to [out=30,in=-120] (1);
    \draw[->, densely dotted] (4) to [out=0,in=180] (3,-0.45);
    \end{tikzpicture} &
\begin{tikzpicture}[scale = 0.9]
    \node[thick, draw=orange, ellipse,
    align=center,minimum height=15pt, minimum width = 10pt] at  (0,0)(0){};
    \node[thick, draw=red, ellipse,
    align=center,minimum height=24pt, minimum width = 12pt,red] at  (1.5,0)(1){$1$};
    \node[thick, draw=red, ellipse,
    align=center,minimum height=24pt, minimum width = 12pt,red] at  (3,0)(2){$1$};
    \draw [->](0) to [out=30,in=150] (1);
    \draw [->] (0) to [out=-30,in=-150] (2);
    \draw [->] (1) to [out=45,in=180](4.5,0.15);
    \draw [->] (1) to[out=50,in=180] (4.5,0.45);
    \draw [->] (2) to [out=-15,in=180](4.5,-0.15);
    \draw [->] (2) to[out=-30,in=180] (4.5,-0.45);

    \node[thick, draw=orange, ellipse,
    align=center,minimum height=15pt, minimum width = 10pt, densely dotted] at  (0,-0.75)(4){};
    \draw[->, densely dotted] (4) to [out=20,in=-130] (2);
    \draw[->, densely dotted] (4) to [out=-10,in=180] (4.5,-0.75);
    \node[thick, draw=orange, ellipse,
    align=center,minimum height=15pt, minimum width = 10pt, densely dotted] at  (0,0.75)(4){};
    \draw[->, densely dotted] (4) to [out=-20,in=120] (1);
    \draw[->, densely dotted] (4) to [out=10,in=180] (4.5,0.75);
    \end{tikzpicture}   & 
\begin{tikzpicture}[scale = 0.9]
    \node[thick, draw=blue, ellipse,
    align=center,blue,minimum height=24pt, minimum width = 12pt] at  (0,0)(0){\footnotesize $\frac{1}{2}$};
    \node[thick, draw=red, ellipse,
    align=center,minimum height=24pt, minimum width = 12pt,red] at  (1.5,0)(1){$1$};
    \draw [->, dashed](0) to [out=30,in=180] (3,0.45);
    \draw [->] (0) to [out=-30,in=-150] (1);
    \draw [->] (1) to [out=30,in=180](3,0.15);
    \draw [->] (1) to[out=-30,in=180] (3,-0.15);
    
    \node[thick, draw=orange, ellipse,
    align=center,minimum height=15pt, minimum width = 10pt, densely dotted] at  (0,-0.75)(4){};
    \draw[->, densely dotted] (4) to [out=30,in=-120] (1);
    \draw[->, densely dotted] (4) to [out=0,in=180] (3,-0.45);
    \end{tikzpicture}\\
    $(a)$ & $(b)$ & $(c)$ \\
\end{tabular}
\caption{\label{figure floor diagrams genus 2} All floor diagrams of genus 2 and tangency profile $1^{2a}$. An arbitrary degree $2a$ can be reached by attaching ends via joints as indicated by the dotted part of the pictures. Floor diagrams $(a)$ and $(b)$ contribute for both $\TT M_0$ and $\TT M_1$, the diagram in $(c)$ contributes with the dashed end for $\TT M_0$ and without the dashed end for $\TT M_1.$}
\end{center}
\end{figure}
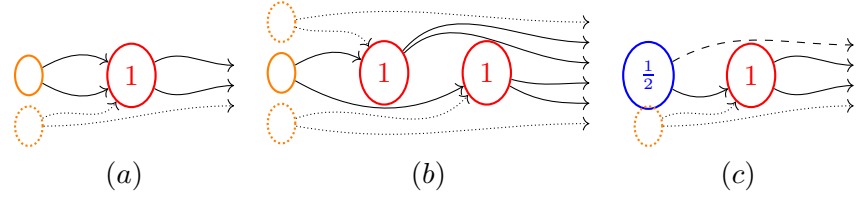

\subsection{Quasi-modularity}

 Quasi-modularity statements for the generating series of invariants of degree $0$ line bundle over an elliptic curve have been proven in \cite{blomme2021floor}. Quasi-modularity is a desirable property since it implies a strong control over the coefficients, bounding their growth polynomially. We consider the generating series of $N^{\delta}_{g,aE+bF}(\mu,\nu)$ in $a$. For $\TT M_0$, given $b$ and partitions $\mu+\nu\vdash 2b$, we set
$$F^0_{g,b}(\mu,\nu)(y) =\sum_{a\in\frac{1}{2}\NN} N^0_{g,aE+bF}(\mu,\nu)y^{2a},$$
where $g\geqslant 1$ and $b\in\NN$. We consider exponents $2a$ of the variable $y$ since $a$ is a half-integer. Similarly, for $\TT M_1$, as $b$ can be an integer or a half-integer, we have two generating series: we set
\begin{align*}
F^1_{g,b}(\mu,\nu)(y) & =\sum_{a\in\NN} N^1_{g,aE+bF}(\mu,\nu)y^{2a} \text{ if }b\in\NN, \\
F^1_{g,b}(\mu,\nu)(y) & =\sum_{a\in\frac{1}{2}+\NN}^\infty N^1_{g,aE+bF}(\mu,\nu)y^{2a} \text{ if }b\notin\NN. 
\end{align*}
In other words, we consider the generating series of relative invariants, fixing the intersection profile with the boundary and varying the intersection number $(aE+bF)\cdot F=2a$, which is the exponent of the series variable.

Before proving the regularity result on the above generating series, we introduce the following auxiliary functions:
	\begin{itemize}[label=$\ast$,noitemsep]
	\item $\displaystyle G_2(y)=\sum_{n=1}^\infty \sigma_1(n)y^n$, the usual Eisenstein series up to an affine transformation,
	\item $\displaystyle H(y)=\sum_{n=1}^\infty \widetilde{\sigma_1}(n)y^n$, the generating series of $\displaystyle \widetilde{\sigma_1}(n)=\sum_{\substack{k|n \\ k\text{ odd}}}\frac{n}{k}$,
	\item $\displaystyle H_0(y)=\sum_{n=1}^\infty \widetilde{\sigma_1}(2n)y^{2n},\ H_1(y)=\sum_{n=0}^\infty \widetilde{\sigma_1}(2n+1)y^{2n+1}$, the odd and even parts of $H(y)$.
	\end{itemize}
	
	\begin{lem}
	The functions $H$, $H_0$ and $H_1$ are quasi-modular forms for some finite index subgroup of $SL_2(\ZZ)$.
	\end{lem}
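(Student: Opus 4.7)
The plan is to reduce all three functions to linear combinations of $G_2(\tau)$ and its rescalings $G_2(N\tau)$, and then invoke the classical fact that $G_2$ is quasi-modular of weight $2$ for $SL_2(\ZZ)$, so that $G_2(N\tau)$ is quasi-modular of weight $2$ for $\Gamma_0(N)$. With $y = e^{2\pi i \tau}$ throughout, multiplication of the argument by $y^N$ corresponds to replacing $\tau$ by $N\tau$.

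The first step is a direct manipulation of $\widetilde{\sigma_1}$. Writing $n = 2^a m$ with $m$ odd, the odd divisors of $n$ are exactly the divisors of $m$, and a short computation shows $\widetilde{\sigma_1}(n) = 2^a \sigma_1(m)$. Comparing with $\sigma_1(n) = (2^{a+1}-1)\sigma_1(m)$ and $\sigma_1(n/2) = (2^a - 1)\sigma_1(m)$ (with the convention that $\sigma_1$ vanishes on non-integers) yields the identity
\[
\widetilde{\sigma_1}(n) = \sigma_1(n) - \sigma_1(n/2).
\]
Summing this identity against $y^n$ gives $H(y) = G_2(y) - G_2(y^2)$, i.e.\ $H(\tau) = G_2(\tau) - G_2(2\tau)$. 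Since $G_2$ is quasi-modular of weight $2$ for $SL_2(\ZZ)$ and $G_2(2\tau)$ is quasi-modular of weight $2$ for $\Gamma_0(2)$, we conclude that $H$ is quasi-modular of weight $2$ for $\Gamma_0(2)$.

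For $H_0$ and $H_1$, I would exploit the same computation: with $n = 2^a m$ as above, $2n = 2^{a+1} m$, so $\widetilde{\sigma_1}(2n) = 2\widetilde{\sigma_1}(n)$. This immediately gives
\[
H_0(y) = \sum_{n\geq 1} \widetilde{\sigma_1}(2n)\, y^{2n} = 2\sum_{n\geq 1}\widetilde{\sigma_1}(n)\, y^{2n} = 2\,H(y^2),
\]
which in terms of $\tau$ is $2\bigl(G_2(2\tau) - G_2(4\tau)\bigr)$, a quasi-modular form of weight $2$ for $\Gamma_0(4)$. Then $H_1 = H - H_0 = G_2(\tau) - 3G_2(2\tau) + 2G_2(4\tau)$ is also quasi-modular of weight $2$ for $\Gamma_0(4)$, which finishes the argument. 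No step presents a real obstacle; the only thing one needs to be careful with is the convention $\sigma_1(x) = 0$ for non-integer $x$, which underlies the identity $\widetilde{\sigma_1}(n) = \sigma_1(n) - \sigma_1(n/2)$ in the odd case.
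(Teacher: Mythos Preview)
Your proof is correct. For $H$ you arrive at exactly the paper's identity $H(y)=G_2(y)-G_2(y^2)$, just via the pointwise relation $\widetilde{\sigma_1}(n)=\sigma_1(n)-\sigma_1(n/2)$ rather than splitting the divisor sum by parity of $k$; these are the same computation.

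For $H_0$ and $H_1$ your route differs slightly from the paper's. The paper takes the even and odd parts of $H$ directly, obtaining $H_0(y)=\tfrac12(G_2(y)+G_2(-y))-G_2(y^2)$ and $H_1(y)=\tfrac12(G_2(y)-G_2(-y))$, and then cites an external lemma to handle the quasi-modularity of $G_2(-y)=G_2(\tau+\tfrac12)$. You instead exploit the identity $\widetilde{\sigma_1}(2n)=2\widetilde{\sigma_1}(n)$ to get $H_0(y)=2H(y^2)=2G_2(2\tau)-2G_2(4\tau)$ and hence $H_1=G_2(\tau)-3G_2(2\tau)+2G_2(4\tau)$. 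This is a bit cleaner: everything is expressed purely in terms of $G_2(N\tau)$ with integer $N$, so quasi-modularity for $\Gamma_0(4)$ is immediate without the auxiliary reference. The paper's approach, on the other hand, is marginally more conceptual (just ``take even/odd parts'') but requires knowing that half-integer translates of $G_2$ remain quasi-modular for a congruence subgroup.
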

	
	\begin{proof}
	We start with the function $H$. We have:
	\begin{align*}
	H(y)= & \sum_{n=1}^\infty \left(\sum_{\substack{k| n \\ k\text{ odd}}} \frac{n}{k}\right)y^n 
	=  \sum_{n=1}^\infty \left(\sum_{k|n}\frac{n}{k}\right)y^n -\sum_{n=1}^\infty \left(\sum_{\substack{k| n \\ k\text{ even}}} \frac{n}{k}\right)y^n \\
	\overset{(*)}{=} & G_2(y)-\sum_{n'=1}^\infty \left(\sum_{k'|n'}\frac{2n'}{2k'}\right)y^{2n'} 
	= G_2(y)-G_2(y^2).
	\end{align*}
	To see $(*)$,  note that for even $k$ and $n$ we can write $k=2k'$, $n=2n'$, and $k|n$ if and only if $k'|n'$. Then, $H_0$ and $H_1$ are just the even and odd parts of $H$:
	$$\begin{array}{rlrl}
	H_0(y) = & \frac{1}{2}\left(H(y)+H(-y)\right) & H_1(y) = & \frac{1}{2}\left(H(y)-H(-y)\right) \\
	= & \frac{1}{2}\left( G_2(y)+G_2(-y) \right)-G_2(y^2) , & = & \frac{1}{2}\left( G_2(y)-G_2(-y) \right) . \\
	\end{array}$$
	Using \cite[Lemma 2.1]{blomme2022abelian3}, they are quasi-modular forms for finite index subgroups of $SL_2(\ZZ)$.
	\end{proof}

	\begin{theo}\label{theorem-quasi-modularity}
	The generating series $F^0_{g,b}(\mu,\nu)$ and $F^1_{g,b}(\mu,\nu)$ are quasi-modular forms for some finite index subgroup of $SL_2(\ZZ)$.
	\end{theo}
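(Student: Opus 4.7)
The plan is to apply the floor diagram algorithm of Section \ref{sec-floor-diag-algo} to reduce the generating series to a finite sum of products of quasi-modular forms. First, by Proposition \ref{proposition diagram multiplicity}, write
$$F^\delta_{g,b}(\mu,\nu)(y)=\sum_a\sum_{(\Dfk,\mfk)}m(\Dfk,\mfk)\,y^{2a},$$
where the inner sum runs over marked floor diagrams of genus $g$ in class $aE+bF$ with tangency profile $\mu+\nu$. I would stratify this sum by the \emph{combinatorial type} $\tau=(\Dfk_0,(w_e),\mfk)$: the underlying oriented graph $\Dfk_0$ (with its designation of floors as ground floors or \'etages and the labelling of ends by $\mu+\nu$), the internal edge weights $(w_e)$, and the marking $\mfk$, leaving only the floor degrees $(a_\F)_\F,(a_\G)_\G$ free. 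Since the genus of $\Dfk_0$ as a graph plus its number of floors equals $g$, both are bounded by $g$. The flow conservation at \'etages and the fixed labelling of the ends by $\mu+\nu$ (of total weight $2b$) then confine the internal weights $(w_e)$ to a bounded lattice polytope, and the set of markings of any fixed weighted diagram is finite. Hence the set of types $\tau$ contributing to $F^\delta_{g,b}(\mu,\nu)$ is finite.

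For each such type $\tau$, the multiplicity $m(\Dfk,\mfk)$ decomposes as a constant $C_\tau$ (independent of the floor degrees: it gathers the automorphism factor, the $2^{N(\Dfk)}$ prefactor, and all edge-weight contributions) times the degree-dependent product
$$\prod_\F a_\F^{\mathrm{val}(\F)-1}\sigma_1(a_\F)\cdot\prod_\G 2\,(2a_\G)^{\mathrm{val}(\G)-1}\widetilde{\sigma_1}(2a_\G).$$
Because $2a=\sum_\F 2a_\F+\sum_\G 2a_\G$, the joint sum over $a$ and over all floor degrees factorizes as
$$F^\delta_{g,b}(\mu,\nu)(y)=\sum_\tau C_\tau\prod_\F \Phi_{\mathrm{val}(\F)}(y^2)\prod_\G \Psi_{\mathrm{val}(\G),\varepsilon_\G(\tau)}(y),$$
where I set $\Phi_v(x)=\bigl(x\tfrac{d}{dx}\bigr)^{v-1}G_2(x)$ and $\Psi_{v,\varepsilon}(y)=2\bigl(y\tfrac{d}{dy}\bigr)^{v-1}H_\varepsilon(y)$. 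The auxiliary series $H_\varepsilon\in\{H,H_0,H_1\}$ at a ground floor $\G$ is selected by the balancing condition $\sum_{e\ni\G}w_e\equiv 2\delta a_\G\pmod 2$: depending on $\delta$ and on the parity of $\sum_{e\ni\G}w_e$, the allowed values of $2a_\G$ are either all positive integers (use $H$), only the even ones (use $H_0$), or only the odd ones (use $H_1$); an incompatible parity forces $C_\tau=0$.

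By the preceding lemma, each of $H$, $H_0$, $H_1$ is quasi-modular for some finite-index subgroup of $SL_2(\ZZ)$, and $G_2(y^2)$ is quasi-modular for $\Gamma_0(2)$. Since the ring of quasi-modular forms for a fixed finite-index subgroup of $SL_2(\ZZ)$ is closed under multiplication and under the derivations $y\tfrac{d}{dy}$ and $x\tfrac{d}{dx}$, each $\Phi_{\mathrm{val}(\F)}(y^2)$ and each $\Psi_{\mathrm{val}(\G),\varepsilon_\G(\tau)}(y)$ is quasi-modular, and so is every term of the finite sum above. Intersecting the finitely many subgroups that appear yields a finite-index subgroup of $SL_2(\ZZ)$ for which $F^\delta_{g,b}(\mu,\nu)$ is quasi-modular. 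The main subtlety of the argument is the bookkeeping of the ground-floor parity constraints and their matching to the correct auxiliary series $H_\varepsilon$; once this matching is carried out, the quasi-modularity of the whole generating series follows immediately from the stability properties of quasi-modular forms.
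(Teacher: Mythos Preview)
Your proposal is correct and follows essentially the same approach as the paper: fix the combinatorial type (graph, edge weights, marking) and sum freely over the floor degrees, so that the generating series factorizes into products of derivatives of $G_2(y^2)$ for \'etages and of $H$, $H_0$, or $H_1$ for ground floors according to the parity constraint. Your write-up is in fact slightly more explicit than the paper's in justifying the finiteness of types and in spelling out the case analysis for $H_\varepsilon$, but the argument is the same.
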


	\begin{proof}
	In the section on polynomiality \ref{sec polynomiality}, we consider the floor diagrams and forget the elevators weights. Here, we instead forget the degrees of the floors. Given $b$ and  partitions $\mu$ and $\nu$ such that $\mu+\nu\vdash 2b$, there are only a finite number of genus $g$ floor diagrams up to the degree of the floors. Then, for a given $a\in\frac{1}{2}\NN$, we find the diagrams contributing to $N^\delta_{g,aE+bF}(\mu,\nu)$ by constructing partitions of $a$ satisfying such that for each \'etage $\F$, $a_\F\in\NN$, for a ground floor $\G$, $a_\G\in\frac{1}{2}\NN$ and $\delta\cdot 2a_\G\equiv\sum_{e\ni\G}w_e \text{ mod }2$,
	and such that $\sum a_\G+\sum a_\F=a$.
 
	Let $\Dfk$ be a marked floor diagram without floor degrees, and for a ground floor let $\varepsilon_\G\equiv \sum_{e\ni\G}w_e$. Given a family $(a_\F,a_\G)$ of degrees for the floors, let $\Dfk(a_\F,a_\G)$ be the corresponding diagram. By Definition \ref{definition diagram multiplicity}, its multiplicity is 
	$$m\big(\Dfk(a_\F,a_\G)\big)=W\prod_\F a_\F^{\val\F-1}\sigma_1(a_\F)\prod_\G (2a_\G)^{\val\G-1}\widetilde{\sigma_1}(2a_\G),$$
	where $W$ accounts for the contribution of the elevators weights and the $2^k$ term. The sum over all values of $a$ factors as follows:
	\begin{align*}
	 & \sum_{a}\left(\sum_{\Sigma a_\F+\Sigma a_\G=a} m\big(\Dfk(a_\F,a_\G)\big) \right) y^{2a} \\
	= & W \prod_\F\left( \sum_{a_\F=1}^\infty a_\F^{\val\F-1}\sigma_1(a_\F) y^{2a_\F}\right)\prod_\G\left( \sum_{\substack{a_\G\in\frac{1}{2}\NN \\ \delta 2a_\G\equiv \varepsilon_\G\text{ mod }2}}^\infty (2a_\G)^{\val\G-1} \widetilde{\sigma_1}(2a_\G) y^{2a_\G}\right). 
	\end{align*}
	The series in the product over the \'etages are quasi-modular forms, since they are equal to $(D^k G_2)(y^2)$ for a derivation of order $k$.  The product over the ground floors depends on  $\delta$:
 
	In $\TT M_0$, there is no parity condition on the sum, thus we recover some derivative of the generating function $H(y)$ and obtain quasi-modularity. In $\TT M_1$, depending of the value of $\varepsilon_\G$, we sum over the odd or even values, yielding $H_{\varepsilon_\G}(y)$ in any case. This also results in the quasi-modularity of the series.
	\end{proof}
	
	\begin{rem}
	For  $\TT M_0$, $\mu=\emptyset$ and $\nu=1^{2b}$, we recover the non-relative invariants of the degree $0$ cylinder, for which  quasi-modularity has already been proven in \cite{blomme2021floor} and \cite{bohm2020counts}.
	\end{rem}

\bibliographystyle{plain}
\bibliography{biblio}

\end{document}